\font\sc=rsfs10
\newcommand{\cC}{\sc\mbox{C}\hspace{1.0pt}}
\newcommand{\Hom}{\operatorname{Hom}}
\newcommand{\Ext}{\operatorname{Ext}}
\newcommand{\Tor}{\operatorname{Tor}}
\newcommand{\ch}{\operatorname{ch}}
\newcommand\C{\mathbb C}
\newcommand\Z{\mathbb Z}
\newcommand\N{\mathbb N}
\newcommand\Q{\mathbb Q}
\newtheorem{thm}{Theorem}[section]
\newtheorem{exm}[thm]{Example}
\newtheorem{prop}[thm]{Proposition}
\newtheorem{cor}[thm]{Corollary}
\newtheorem{lemma}[thm]{Lemma}
\theoremstyle{definition}
\newtheorem{conj}[thm]{Conjecture}
\numberwithin{equation}{section}
\title{Category $\mathcal{O}$ for quantum groups}
\author{Henning Haahr Andersen and Volodymyr Mazorchuk}
\begin{document}

\begin{abstract}
In this paper we study the BGG-categories $\mathcal O_q$ associated to quantum groups. 
We prove that many properties of the ordinary  BGG-category $\mathcal O$ for a 
semisimple complex Lie algebra carry over to the quantum case. 

Of particular interest is the case when $q$ is a complex root of unity. Here we prove a tensor
decomposition for both simple modules, projective modules, and indecomposable tilting modules. 
Using the known Kazhdan-Lusztig conjectures for $\mathcal O$ and for finite dimensional 
$U_q$-modules we are able to determine all irreducible characters as well as the characters of 
all indecomposable tilting modules in $\mathcal O_q$. 

As a consequence of these results we are able to recover also a known result, namely 
that the generic quantum case behaves like the classical category $\mathcal O$.
\end{abstract}

\maketitle

\section{Introduction}\label{s1}

Let $\mathfrak g$ be a semi-simple Lie algebra over $\Q$ associated to a semi-simple complex Lie algebra
$\mathfrak g_{\mathbb{C}}$. 
The corresponding BGG-category $\mathcal O$, defined in \cite{BGG}, has been studied intensively over the 
last decades, see the recent monograph \cite{Hu} for details. 

In this paper we study similar categories for quantum groups. We let $v$  denote an indeterminate 
and set $U_v$ equal to the quantum group (or rather the quantized enveloping algebra) for 
$\mathfrak g$ over $\Q(v)$. The subcategory $\mathcal O_v$ of the module category for $U_v$ is then 
defined in complete analogy with $\mathcal O_{int}$, the subcategory of $\mathcal O$ consisting of modules 
with integral weights. Using Lusztig's quantum divided power version of $U_v$ (see below) it is possible to consider 
also specializations $\mathcal O_q$ of $\mathcal O_v$ for
any $q \in \C\setminus\{0\}$. Our principal interest in this paper is the study of $\mathcal O_q$ in
the case where $q$ is a root of unity.

In a little more detail set $A = \Z[v, v^{-1}] \subset \Q(v)$ and let $U_A$ be the Lusztig $A$-form of $U_v$, cf. \cite{Lu90b}.
For any non-zero $q \in \C$ we set $U_q = U_A \otimes_A \C$, where $\C$ is made into an $A$-algebra by 
the specialization $v \mapsto q$. Then $\mathcal O_q$ is the BGG-category for $U_q$. 

Denote by $\mathcal F_q$ the subcategory of $\mathcal O_q$ consisting of all finite dimensional $U_q$-modules
of type $\bf 1$. When $q$ is not a root of unity, this category is semisimple and it has exactly the
same ``combinatorics'' as the category of finite dimensional modules for 
$\mathfrak g_{\mathbb{C}}$, i.e. the characters of the simple modules are given by Weyl's character formula, see e.g. 
\cite{APW91,Ja}. 

Suppose now $q$ is a root of unity. Then $\mathcal F_q$ has a much more complicated structure.
In the present paper we assume that $q$ is of odd order $l$ and if $\mathfrak g$ contains 
a summand of type $G_2$ we assume in addition that $l$ is prime to $3$. Then the ``combinatorics'' of $\mathcal F_q$ has been worked out: Lusztig stated
the conjecture \cite{Lu89} that irreducible characters in $\mathcal F_q$ should be given by the values at $1$ of the Kazhdan-Lusztig
polynomials associated to the affine Weyl group for $\mathfrak g$. More specifically, the Kazhdan-Lusztig polynomials involved are the
parabolic ones corresponding to affine Weyl group for the Langlands dual of
$\mathfrak g$ relative to its finite Weyl subgroup, cf also \cite{So1}. Kazhdan and Lusztig proved that the category of finite dimensional modules (of type {\bf 1}) 
for $U_q$ is equivalent to a category of modules for the corresponding affine Kac-Moody algebra \cite{KL94}. This requires a weak restriction on $l$ and 
in the non-simply laced case it has to be supplemented 
by Lusztig's later work \cite{Lu94}. Then Kashiwara and Tanisaki proved the corresponding conjecture for affine Kac-Moody algebras, 
\cite{KT95} and \cite{KT96}. This established the above mentioned conjecture on the irreducible characters in $\mathcal F_q$.  
Soergel subsequently determined the characters of indecomposable tilting modules in $\mathcal F_q$, see  \cite{So2} and \cite{So3}.

We prove that several classes of fundamental modules such as simple modules, indecomposable projective modules, indecomposable injective modules, and indecomposable tilting modules
in $\mathcal O_q$ have a tensor product decomposition with a part which ``comes from'' $\mathcal F_q$
and a part which is a $q$-Frobenius twist of a corresponding module in $\mathcal O_{int}$, see Sections~\ref{s3} and~\ref{s4} for the 
precise statements. This allows us to deduce our main results: we determine all irreducible characters as well as the characters of all 
indecomposable tilting modules in $\mathcal O_q$, see Corollaries 5.3-4. 

In the process of establishing these results we prove that many of the properties of $\mathcal O$, e.g.
finite length of all modules, the existence of enough projectives and injectives, existence of tilting modules, and Ringel self-duality all carry over to $\mathcal O_q$.

One of the main features of $\mathcal O_q$ is that it contains a copy of $\mathcal O_{int}$, 
namely we may identify $\mathcal O_{int}$ with the direct sum of all ``special blocks'' 
in $\mathcal O_q$, see Theorem~\ref{thm9} below. Once we have established this and the 
above mentioned properties of $\mathcal O_q$ we return to the generic category $\mathcal O_v$. 
Using specialization of $v$ at $1$ on the one hand and at large order roots of unity $q$ 
on the other hand we are able to identify the combinatorics (i.e. the composition factor 
multiplicities of simple modules in Verma modules, and the multiplicities of Verma modules in 
Verma flags of indecomposable tilting modules) in $\mathcal O_v$ with that of $\mathcal O_{int}$. 
As was pointed out to us by D.~Kazhdan, in the simply laced case there is a stronger result,
proved by M. Finkelberg in his thesis \cite{Fi} from 1993, which establishes 
a category equivalence between $\mathcal O_q$ and $\mathcal O_{int}$ in the case when 
$q$ is a non-zero non-root of unity.  G.~Lusztig made us aware of the paper \cite{EK} where a 
generic equivalence is proved more generally for symmetrizable Kac-Moody algebras, see 
\cite[Theorem 4.2]{EK}. Our proof via the root of unity case is completely different.

The paper is organized as follows. In Section 2 we recall some basic facts about quantum groups at 
roots of unity. Then in Sections 3-4 we establish the results about $\mathcal O_q$ mentioned above. 
In particular, the tensor decompositions of simple modules, indecomposable projective or injective 
modules, and tilting modules are found in Theorem~\ref{prop3}, Theorem~\ref{Thm13}, Theorem~\ref{Thm15}, 
and Corollary~\ref{cor4.4.2}, respectively. Then we deduce in Section 5 the combinatorics of 
$\mathcal O_q$ before we conclude the paper in Section 6 by proving that in 
the generic case the combinatorics of $\mathcal O_v$ is the same as that of $\mathcal O_{int}$.
We complete the paper with a section which provides a parallel with the category $\mathcal{O}$ for Lie
superalgebras.
\vspace{5mm}

\noindent
{\bf Acknowledgements.} A major part of the research presented in the paper was done 
during the visit of the second author to the Center for Quantum Geometry of 
Moduli Spaces, Aarhus University. The financial support and 
hospitality of the Center are gratefully acknowledged.
The second author was partially supported by the
Royal Swedish Academy of Sciences and the Swedish Research Council.

We thank David Kazhdan and George Lusztig for calling our attention to related 
results already available in the literature. We also thank Jim Humphreys and the referee for very helpful
comments. We thank Eric Vasserot who made us aware of a mistake in the original formulation of Corollary~\ref{cor23}.

\section{Preliminaries on quantum groups}\label{s2}

\subsection{Quantum groups at roots of $1$}\label{s2.1}

For an indeterminate $v$ denote by $U_v$ the quantum group over $\Q(v)$ corresponding 
to a complex simple Lie algebra $\mathfrak g$. This is the $\Q(v)$-algebra with 
generators $E_i, F_i, K_i^{\pm 1}$, $i = 1, 2, \dots , n=\mathrm{rank}(\mathfrak g)$ 
and relations as given in \cite[Chapter~5]{Ja}.

Set $A = \Z[v, v^{-1}]$. Then $A$ contains the quantum numbers 
$[r]_d = \frac{v^{dr} - v^{-rd}}{v^d - v^{-d}}$ for any $r, d \in \Z$, $d \neq 0$ as 
well as the corresponding $q$ binomials $\left[\genfrac{}{}{0pt}{}{m}{t}\right]_d$, 
$m \in \Z, t \in \N$. When $r \geq 0$ we set $[r]_d! = [r]_d[r-1]_d \cdots [1]_d$. 
In the following we will often need these elements for $d = 1$ in which case we will 
omit it from the notation. 

Let $C$ be the Cartan matrix associated with $\mathfrak g$. We denote by $D$ a diagonal 
matrix whose entries are relatively prime natural numbers $d_i$ with the property that 
$D C$ is symmetric. Then we set $E_i^{(r)} = E_i^r/[r]_{d_i}!$. With a similar expression 
for $F_i^{(r)}$ we define now the $A$-form $U_A$ of $U_v$ to be the $A$-subalgebra of $U_v$ 
generated by the elements $E_i^{(r)}, F_i^{(r)}, K_i^{\pm 1}$, $i= 1, \dots, n$, $ r \geq 0$. 
This is the Lusztig divided power quantum group.

In this paper we fix throughout a primitive root of unity $q \in \C$ of odd order $l$. 
We assume that $l$ is prime to $3$ if $\mathfrak g$ has type $G_2$. The corresponding 
quantum group is then the specialization $U_q = U_A \otimes _A \C$ where $\C$ is considered 
as an $A$-module via $v \mapsto q$, cf. \cite{Lu90a}, \cite{Lu90b}. We abuse notation and write 
$E_i^{(r)}$ also for the element $E_i^{(r)} \otimes 1 \in U_q$ and similarly for
$F_i^{(r)}$.

We have a triangular decomposition $U_q = U_q^-U_q^0U_q^+$ with $U_q^-$ and $U_q^+$ being 
the subalgebra generated by $F_i^{(r)}$ or $E_i^{(r)}$, $i = 1, \dots , n$,  $r \geq 0$, 
respectively. The ``Cartan part'' $U_q^0$ is the subalgebra generated by $K_i^{\pm 1}$ and 
$\left[\genfrac{}{}{0pt}{}{K_i}{t}\right]$, $i = 1, \dots , n, \; t \geq 0$, where
\begin{displaymath}
\left[\genfrac{}{}{0pt}{}{K_i}{t}\right] = \prod _{j=1}^t 
\frac{K_iv^{d_i(1-j)} - K_iv^{-d_i(1-j)}}{v^{d_ij} - v^{-d_ij}}. 
\end{displaymath}
We denote the ``Borel subalgebra'' $U_q^0U_q^+$ by $B_q$.

Recall that $U_v$ is a Hopf algebra with comultiplication $\Delta$, counit $\epsilon$ 
and antipode $S$, see \cite[4.11]{Ja}. It is easy to see that their restrictions give $U_A$ 
the structure of a Hopf algebra over $A$. Then $U_q$ also gets an induced Hopf algebra structure. 

\subsection{The small quantum group}\label{s2.2}

We also have the small quantum group $u_q \subset U_q$, defined as the subalgebra of $U_q$ 
generated by $E_i, F_i, K_i^{\pm 1}$, $i = 1, \dots , n$. It is also a Hopf subalgebra. Note that 
$U_q$ is generated by $u_q$ and $E_i^{(l)}, F_i^{(l)}$, $i = 1, \dots , n$, as follows from \cite[Proposition 3.2(a)]{Lu89}. 

The small quantum group also has a triangular decomposition $u_q = u^-_qu_q^0u_q^+$ with the obvious 
definitions of the three parts. We write $b_q = u_q^0u_q^+$. Note that $u_q^-$ and $u_q^+$ are finite 
dimensional. In fact, the PBW basis for $U_q^-$ (resp. $U_q^+$) leads to a basis for $u_q^-$ 
(resp. $u_q^+$): we just have to take PBW-monomials where each ``root vector'' has degree at most 
$l$, \cite[Theorem 8.3]{Lu90b}. Also $u_q^0$ is finite dimensional. In fact, $K_i^{2l} = 1$ for all $i$, see \cite[5.7]{Lu90a}.

\subsection{The quantum Frobenius homomorphism}\label{s2.3}

Let $U_\C$ denote the enveloping algebra of $\mathfrak g$. It has generators $e_i, f_i$ and 
$h_i$, $i = 1, \dots , n$. Lusztig has then defined in \cite[Section 8]{Lu90b}, see also \cite[Part~V]{Lu},
a quantum Frobenius 
homomorphism  $Fr_q : U_q \rightarrow U_\C$ by

\begin{displaymath}
\begin{array}{lcl}
E_i^{(r)} \mapsto \begin{cases} e_i^{(r/l)} &\text { if } l 
\text { divides } r;\\0 & \text { if not.}\end{cases}, &\quad\quad & 
K_i \,\,\,\,\mapsto\,\, 1, \\
F_i^{(r)} \mapsto \begin{cases} f_i^{(r/l)} &\text { if } l 
\text { divides } r;\\0 & \text { if not.}\end{cases}, &\quad\quad & 
\left[\genfrac{}{}{0pt}{}{K_i}{t}\right] \mapsto  \left(\genfrac{}{}{0pt}{}{h_i}{t}\right).
\end{array}
\end{displaymath}
Here $ \left(\genfrac{}{}{0pt}{}{h_i}{t}\right)= \prod_{s=1}^{t} \frac{(h_i - s +1)}{s}. $
 
\subsection{Representations of $U_q$}\label{s2.4}
 
Set $X = \Z^n$. Then for $\lambda \in \Z^n$ we define $\chi_\lambda : U_q^0 \rightarrow \C$ by 
$\chi_\lambda (K_i^{\pm 1}) = q^{\pm d_i\lambda_i}$ and 
$\chi_\lambda (\left[\genfrac{}{}{0pt}{}{K_i}{t}\right]) = 
\left[\genfrac{}{}{0pt}{}{\lambda_i}{t}\right]_{d_i}$. This is a well-defined character of 
$U_q^0$ (see e.g. \cite[Lemma~1.1]{APW91}) and it extends to $B_q$ by mapping $E_i^{(r)}$ to $0$ for all $r > 0$, $i = 1, \dots , n$.
         
If $M$ is a $U_q^0$-module, then the $\lambda$ weight space of $M$ is defined as follows:
\begin{equation}\label{eq1}
M_\lambda = \{m \in M \mid u m = \chi_\lambda(u) m \text { for all } u \in U_q^0 \}. 
\end{equation}
The module $M$ is called a {\em weight module of type $\mathbf{1}$} provided that $M$ decomposes
into a direct sum of weight spaces of the form \eqref{eq1}. In this paper we consider only weight modules
of type $\mathbf{1}$ and will simply call them {\em weight modules}.
 
If $N$ is a $U_\C$-module then we may consider $N$ also as a $U_q$-module via $Fr_q$. To distinguish 
it from $N$ we denote this $U_q$-module by $N^{[l]}$ and  call it the ($q$-Frobenius) twist of $N$. 
Note that $u_q$ acts trivially on $N^{[l]}$. Conversely, if $M$ is a weight $U_q$-module on which 
$u_q$ acts trivially, then there exists a $U_\C$-module $N$ such that $M = N^{[l]}$, \cite[8.16]{Lu90b}.  
In this case we also write $N = M^{[-l]}$. Note that $N = M$ as $\C$-spaces and the action of $e_i$ (resp. $f_i$) 
on a vector $v \in N$ is given by $e_i v = E_i^{(l)} v$ (resp. $f_i v = F_i^{(l)} v$).

Note that $Fr_q$ restricts to homomorphisms $U_q^0 \rightarrow U_\C^0$ and $B_q \rightarrow B_\C$,
where $U_\C^0$ is the enveloping algebra of the Cartan subalgebra $\mathfrak h$ in $\mathfrak g$ generated by 
the $h_i$'s and $B_\C$ is the enveloping algebra of the Borel subalgebra of $\mathfrak g$ generated 
by the $h_i$'s and $e_i$'s. We also denote these homomorphisms by $Fr_q$. Using them we can twist 
$U_\C^0$- as well as $B_\C$- modules. For instance, the $1$-dimensional $U_q^0$- (or $B_q$-) 
module $\C_{l \lambda}$ is the twist of the $1$-dimensional $U_\C^0$- (or $B_\C$-) module 
$\C_\lambda$ determined by $\lambda \in X$ (we identify $X$ with the set of integral weights 
in $\mathfrak{h}^*$ in the usual way).

\section{The category $\mathcal O_q$}\label{s3}
 
\subsection{Definition}\label{s3.1}
 
Similarly to \cite{BGG} we define the category $\mathcal O_q$ as the full subcategory of $U_q$-mod 
consisting of those $U_q$-modules  $M$ which satisfy the following conditions:
\begin{enumerate}[$($I$)$]
\item\label{cond1} $M$ is finitely generated as a $U_q$-module,
\item\label{cond2} $M$ is a weight module,
\item\label{cond3} $\dim U_q^+ m < \infty$ for all $m \in M$.
\end{enumerate}
  
\begin{remark}\label{rem1}
Let $\mathcal{O}_{\mathrm{int}}$ denote the integral subcategory (i.e. the direct sum of all integral blocks) of 
the usual BGG category $\mathcal{O}$ for $\mathfrak g$ (see  \cite{BGG}). If $M\in \mathcal O_{\mathrm{int}}$ 
then $M^{[l]} \in \mathcal O_q$. 
\end{remark}
  
For $\lambda \in X$ the Verma $U_q$-module with highest weight $\lambda$ is given by the usual recipe:
\begin{displaymath}
\Delta_q(\lambda) = U_q \otimes _{B_q}\C_\lambda. 
\end{displaymath}
The standard arguments (see e.g. \cite[Chapter~7]{Di}) show that $\Delta_q(\lambda)$ has the 
following universal property:
\begin{displaymath}
\Hom_{\mathcal O_q} (\Delta_q(\lambda), M) = \{m \in M_\lambda \mid E_i^{(r)} m = 
0 \text { for all } r>0, i= 1, \dots, n \}.
\end{displaymath}

Moreover, it is easily seen that $\Delta_q(\lambda)$ has a unique maximal proper submodule. 
The corresponding simple quotient is denoted $L_q(\lambda)$. Then the set $\{L_q(\lambda):
\lambda \in X\}$ is a complete and irredundant set of representatives of isomorphism
classes of simple modules in $\mathcal O_q$.
  
\subsection{Infinitesimal modules}\label{s3.2}
  
Replacing $U_q$ by the small quantum group $u_q$ we get {\em baby Verma modules} defined by:
\begin{displaymath}
\bar \Delta_q(\lambda) = u_q \otimes_{b_q} \C_\lambda, \quad\quad \lambda \in X. 
\end{displaymath}
If we replace here $u_q$ by the subalgebra $u_qU^0_q$ of $U_q$ and $b_q$ by 
$U_q^0b_q = U_q^0u_q^+$, then we have similarly
\begin{displaymath}
\hat \Delta_q(\lambda) = u_qU_q^0 \otimes_{U_q^0b_q} \C_\lambda.
\end{displaymath}
The module $\hat \Delta_q(\lambda)$ restricted to $u_q$ coincides with 
$\bar \Delta_q(\lambda)$  and is a finite dimensional module. It has dimension $l^N$ where 
$N$ is the number of positive roots (because as a vector space we may identify it with $u^-_q$). 
The module $\hat \Delta_q(\lambda)$ has a universal property similar to the one enjoyed by 
$\Delta_q(\lambda)$ and it has a unique simple quotient which we denote $\hat L_q(\lambda)$.  
  
Set now $X_l = \{\lambda \in X \mid 0 \leq \lambda_i < l, i= 1, \dots , n\}$. 
Then each $\lambda \in X$ has an "$l$-adic expansion" $\lambda = \lambda^0 + l \lambda^1$ with 
$\lambda^0 \in X_l$, $\lambda^1 \in X$. In the following upper indices $0$ and $1$ on a weight 
will always refer to the components of the weight in this expansion.

We set $X^+ = \{\lambda \in X \mid \langle \lambda, \alpha^\vee \rangle \geq 0 \text { for all positive roots } \alpha \}$.
The elements of $X^+$ are called the {\em dominant weights}. An {\em  antidominant} weight is a $\lambda \in X$ for which $\lambda + \rho \in -X^+$.
  
We have the following remarkable fact about these infinitesimal simple modules, see \cite[Theorem 1.9]{AW}.
\begin{equation}\label{eq2}
\hat L_q(\lambda) \simeq L_q(\lambda^0) \otimes \C_{\lambda^1}^{[l]}. 
\end{equation}

The most ``special'' infinitesimal simple module is the one with highest weight $(l-1)\rho$. 
Here, as usual, $\rho$ is the half of the sum of all positive roots. We call this module the {\em quantum 
Steinberg module} and denote it by  $St_l$. Note that by \eqref{eq2} it is in fact a simple $U_q$-module,
moreover, we have
\begin{displaymath}
St_l = \hat L_q((l-1)\rho) = \hat \Delta_q((l-1)\rho) = L_q((l-1)\rho).
\end{displaymath}
  
\begin{remark}\label{rem2} 
Above we could also replace $u_q$ by $u_qB_q$. Then we get baby Verma modules for  $u_qB_q$ defined by 
$\tilde \Delta_q(\lambda) = u_qB_q \otimes_{B_q} \C_\lambda$ with simple quotient $\tilde L_q(\lambda)$. 
When restricted to $u_qU_q^0$ these modules coincide with $\hat \Delta_q(\lambda)$ and $\hat L_q(\lambda)$, 
respectively. Note, in particular, that the Steinberg module $St_l$ is also a simple $u_qB_q$-module 
as it extends, in fact, to $U_q$.

The composition factor multiplicities of $\tilde \Delta_q(\lambda)$ as well as the multiplicities with which 
$\tilde \Delta_q(\lambda)$ occurs in a baby Verma filtration of an indecomposable projective $u_qB_q$-module coincide with
the corresponding numbers for the Weyl module in $\mathcal F_q$ with highest weight $\lambda$
when $\lambda$ is sufficiently dominant. This follows from \eqref{eq2}, cf. \cite[Theorem 4.6]{APW92}. This fact allows us to
apply the combinatorics of $\mathcal F_q$ mentioned in the introduction to the category of $u_qB_q$-modules.
\end{remark}
   
\subsection{Tensor product formula for simple modules in $\mathcal O_q$}\label{s3.3}
  
Recall that $U_q$ is a Hopf algebra. In particular, its comultiplication allows us to make the tensor product 
(over $\C$) of two modules for $U_q$ into a $U_q$-module. 
We write $\otimes$ for $\otimes_{\C}$. Note that $\mathcal O_q$ is stable under tensoring with finite 
dimensional modules (of type $\bf 1$).
 
Let $M, N \in \mathcal O_q$. Then we consider $\Hom_\C(M, N)$ as a $U_q$-module in the usual way, see e.g. \cite[Section 2.9]{APW91}. 
The $u_q$-fixed points $\Hom_{u_q}(M, N)$ then form a $U_q$-submodule on which $u_q$ acts trivially, cf. \cite[Section 3.2]{APW92}. 
Hence by Section~\ref{s2.4} there exists a $U_\C$-module 
$P = \Hom_{u_q}(M, N)^{[-l]}$ with $P^{[l]} =  \Hom_{u_q}(M, N)$.

Let us also record the following adjointness valid whenever in addition to 
$M, N \in \mathcal O_q$ we have a module $Q \in \mathcal{O}$
\begin{equation}\label{eq3}
\Hom_{\mathcal O_q}(Q^{[l]} \otimes M, N) \simeq \Hom_{\mathcal O}(Q, \Hom_{u_q}(M, N)^{[-l]}).
\end{equation} 
  
We now have the following theorem first proved by G. Lusztig in \cite[Theorem 7.4]{Lu89}:

\begin{thm}\label{prop3}
Let $\lambda \in X$. Then $L_q(\lambda) \simeq L_\C(\lambda^1)^{[l]} \otimes L_q(\lambda^0)$.
\end{thm}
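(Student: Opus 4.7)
The plan is to show that $M := L_\C(\lambda^1)^{[l]} \otimes L_q(\lambda^0)$ is a simple object of $\mathcal O_q$ containing a highest weight vector of weight $\lambda$; the classification of simple modules in $\mathcal O_q$ then forces $M \simeq L_q(\lambda)$.

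First I would verify that $M \in \mathcal{O}_q$: by Remark~\ref{rem1} the first factor lies in $\mathcal{O}_q$, and applying \eqref{eq2} with $\lambda^1=0$ identifies $L_q(\lambda^0)|_{u_qU_q^0}$ with the finite-dimensional module $\hat L_q(\lambda^0)$, so $L_q(\lambda^0)$ is finite-dimensional and closure of $\mathcal{O}_q$ under tensoring with finite-dimensional modules applies. Next, if $v\in L_\C(\lambda^1)^{[l]}$ and $w\in L_q(\lambda^0)$ are highest weight vectors, then $v\otimes w$ is a weight vector of weight $\lambda^0+l\lambda^1=\lambda$; that it is annihilated by every $E_i^{(r)}$ with $r>0$ follows by expanding $\Delta(E_i^{(r)})$ in the Lusztig form and noting that in each summand either an $E_i^{(s)}$ with $s>0$ hits $v$ (acting as $0$ unless $l\mid s$, in which case as $e_i^{(s/l)}$ which kills the highest weight), or an $E_i^{(t)}$ with $t>0$ hits $w$. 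This is a routine computation.

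The main step is the simplicity of $M$. For this I would restrict to $u_q$: since $u_q$ acts trivially on $L_\C(\lambda^1)^{[l]}$,
\begin{displaymath}
M|_{u_q}\;\cong\; L_\C(\lambda^1)^{[l]}\otimes L_q(\lambda^0)|_{u_q}
\end{displaymath}
with $u_q$ acting only on the second tensorand. Applying \eqref{eq2} once more with $\lambda^1=0$ shows $L_q(\lambda^0)|_{u_q}$ is the simple quotient of the baby Verma $\bar\Delta_q(\lambda^0)$, so $L_q(\lambda^0)|_{u_q}$ is a simple $u_q$-module and $M|_{u_q}$ is isotypic, with canonical multiplicity space $L_\C(\lambda^1)^{[l]}$. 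The natural map
\begin{displaymath}
L_\C(\lambda^1)^{[l]}\;\longrightarrow\; \Hom_{u_q}(L_q(\lambda^0),M), \qquad v\longmapsto \bigl(x\mapsto v\otimes x\bigr),
\end{displaymath}
is then a $U_q$-equivariant isomorphism, and every $U_q$-submodule $N\subseteq M$ corresponds to a $U_q$-submodule $\Hom_{u_q}(L_q(\lambda^0),N)\subseteq L_\C(\lambda^1)^{[l]}$ on which $u_q$ acts trivially. By Section~\ref{s2.4} this submodule has the form $W_0^{[l]}$ for a $U_\C$-submodule $W_0\subseteq L_\C(\lambda^1)$; the simplicity of $L_\C(\lambda^1)$ over $U_\C$ forces $W_0\in\{0,L_\C(\lambda^1)\}$ and hence $N\in\{0,M\}$.

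The hard part is justifying the submodule correspondence in the simplicity argument. I would need to know that $\Hom_{u_q}(L_q(\lambda^0),-)$ descends to a functor between categories of $U_q$-modules, which rests on $u_q$ being a normal Hopf subalgebra of $U_q$, and that under the canonical identification the $U_q$-action on the multiplicity space really coincides with the natural action on $L_\C(\lambda^1)^{[l]}$. Once these are in hand, the remaining ingredients — the highest-weight-vector calculation and the final appeal to the classification of simples in $\mathcal{O}_q$ — are standard.
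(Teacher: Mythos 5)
Your proof is correct but runs in the opposite direction from the paper's. The paper starts from the known-simple module $L_q(\lambda)$, picks a simple $u_q$-constituent $L$ in its $u_q$-socle, and observes that the canonical $U_q$-map $\Hom_{u_q}(L,L_q(\lambda))\otimes L\to L_q(\lambda)$ is injective with image the $L$-isotypic component of the $u_q$-socle; simplicity of $L_q(\lambda)$ then forces this map to be an isomorphism, so $L_q(\lambda)\simeq L_1^{[l]}\otimes L$, and weight considerations identify $L_1=L_\C(\lambda^1)$, $L=L_q(\lambda^0)$. You instead build the candidate $L_\C(\lambda^1)^{[l]}\otimes L_q(\lambda^0)$ and verify directly that it lies in $\mathcal O_q$, has a $\lambda$-highest-weight vector, and is simple. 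Both routes rest on precisely the same Clifford-theoretic input that you flag as ``the hard part'': the $U_q$-module structure on $\Hom_{u_q}(L,-)$ with trivial $u_q$-action, and the identification of $u_q$-trivial weight $U_q$-modules with Frobenius twists. The paper imports this from \cite[Section 3.2]{APW92} and the discussion in Section~\ref{s2.4}, so your flagged gap is exactly the content the paper cites rather than proves; given it, your argument closes. What the paper's direction buys is that it avoids both the highest-weight-vector computation with $\Delta(E_i^{(r)})$ and the explicit simplicity argument for the tensor product, since starting from $L_q(\lambda)$ makes the isotypic map automatically surjective. What your direction buys is a constructive description: you exhibit the tensor product and check it is the simple of the right highest weight, rather than inferring the factorization post hoc.
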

   
\begin{proof}
Let $L$ be any simple $u_q$-module (of type $\bf 1$). Recall from the previous subsection 
that $L$ is the restriction of a simple $U_q$-module (which we also denote by $L$). Then for any 
$M \in \mathcal O_q$ the natural map $\Hom_{u_q}(L, M) \otimes L \to M$ which takes $f \otimes m$ 
to $f(m)$ is a $U_q$-ho\-mo\-mor\-phism. It is in fact an injection which identifies  
$\Hom_{u_q}(L, M) \otimes L$ with the $L$-isotypic component of the $u_q$-socle of $M$. 
By the above this $U_q$-module is equal to 
$N^{[l]}$ for some $N \in \mathcal O_{\mathrm{int}}$.
   
Applying these observations to $M = L_q(\lambda)$ we get therefore 
$L_q(\lambda) \simeq \Hom_{u_q}(L, L_q(\lambda)) \otimes L$ for some such $L$, i.e. 
$L_q(\lambda) \simeq L_1^{[l]} \otimes L$ with $L_1 \in \mathcal O$. Clearly $L_1$ must be 
irreducible, i.e. $L_1 = L_\C(\mu)$ for some $\mu \in X$. By Section 3.2 we have $L \simeq L_q(\nu)$ 
for some $\nu \in X_l$. By weight considerations and the uniqueness of the $l$-adic expansion of 
$\lambda$ we get $\mu = \lambda^1$ and $\nu = \lambda^0$.
\end{proof}  

\subsection{Verma modules in $\mathcal O_q$}\label{s3.4}

We now want to study the composition factors of Verma modules. If $M \in \mathcal O_q$ and $\mu \in X$, we denote by 
$[M:L_q(\mu)]$ the multiplicity of $L_q(\mu)$ as a composition factor of $M$. We use similar notation for modules in $\mathcal O$ and for $u_qB_q$-modules.

\begin{lemma}\label{lemma3b}
Let $M$ be a $B_\C$-module. Then the map 
\begin{displaymath}
u \otimes m \mapsto Fr_q(u) \otimes m, \; u \in U_q, \; m \in M
\end{displaymath}
is an isomorphism of $U_q$-modules 
\begin{displaymath}
U_q \otimes_{u_qB_q} M^{[l]} \simeq (U_\C \otimes _{B_\C} M)^{[l]}.
\end{displaymath}
\end{lemma}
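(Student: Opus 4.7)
The plan is to check that $\phi: u \otimes m \mapsto Fr_q(u) \otimes m$ is well-defined on the balanced tensor product, is $U_q$-linear, and is bijective; the content of the lemma will then reduce to a triangular-decomposition bookkeeping.

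For well-definedness I would only need to verify, for $u \in U_q$, $b \in u_qB_q$, $m \in M$, the identity $Fr_q(ub) \otimes m = Fr_q(u) \otimes (b\cdot m)$ in $U_\C \otimes_{B_\C} M$. Since $Fr_q$ is an algebra homomorphism and crushes $u_q$ into the scalars ($Fr_q(E_i)=Fr_q(F_i)=0$, $Fr_q(K_i)=1$) while sending $B_q$ into $B_\C$, we have $Fr_q(u_qB_q) \subseteq B_\C$; together with the fact that the $u_qB_q$-action on $M^{[l]}$ is by definition through $Fr_q$, the identity is automatic. The $U_q$-linearity of $\phi$ follows in the same way because the $U_q$-action on $(U_\C \otimes_{B_\C} M)^{[l]}$ is by definition $Fr_q$ composed with the natural $U_\C$-action.

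Surjectivity is essentially free: $Fr_q: U_q \twoheadrightarrow U_\C$ is surjective (it hits each generator, $e_i = Fr_q(E_i^{(l)})$, $f_i = Fr_q(F_i^{(l)})$, $h_i = Fr_q([K_i;1])$), so every pure tensor $v\otimes m \in U_\C \otimes_{B_\C} M$ equals $\phi(u\otimes m)$ for some $u$ with $Fr_q(u)=v$. For injectivity I would pass to explicit $\C$-vector space models. The quantum PBW theorem gives $U_q \cong U_q^- \otimes_\C B_q$, and since $u_q^0u_q^+ \subset B_q$ one has $u_qB_q = u_q^-B_q$ with $u_qB_q \cong u_q^- \otimes_\C B_q$. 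Combining this with the Frobenius factorization
\[
U_q^- \;\cong\; U_q^{-,\mathrm{Fr}} \otimes_\C u_q^-,
\]
where $U_q^{-,\mathrm{Fr}}$ is the $\C$-span of the PBW divided-power monomials $\prod_\beta F_\beta^{(l b_\beta)}$ (cf. \cite[\S 8]{Lu90b}), one obtains $U_q \otimes_{u_qB_q} M^{[l]} \cong U_q^{-,\mathrm{Fr}} \otimes_\C M$ as $\C$-spaces. Classical PBW gives $U_\C \otimes_{B_\C} M \cong U_\C^- \otimes_\C M$. Under these identifications $\phi$ becomes the map $Fr_q|_{U_q^{-,\mathrm{Fr}}} \otimes \mathrm{id}_M$; and $Fr_q$ sends the basis $\prod_\beta F_\beta^{(l b_\beta)}$ of $U_q^{-,\mathrm{Fr}}$ bijectively onto the PBW basis $\prod_\beta f_\beta^{(b_\beta)}$ of $U_\C^-$, so $\phi$ is an isomorphism.

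The main obstacle is the Frobenius factorization of $U_q^-$ and the assertion that $Fr_q$ sends the chosen monomial basis of $U_q^{-,\mathrm{Fr}}$ to a PBW basis of $U_\C^-$. This is essentially the Frobenius-kernel theorem for quantum groups at a root of unity, and follows from the divided-power identities and commutation relations in \cite[\S 8]{Lu90b}. Once this is granted, the rest of the argument is formal.
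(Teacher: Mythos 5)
Your proposal is correct, and it follows essentially the same route as the paper: well-definedness and $U_q$-linearity are immediate from the definitions, and the substance lies in identifying both sides with $U_\C^- \otimes_{\C} M$ as $\C$-spaces using the PBW decompositions $U_q = U_q^- B_q$, $u_qB_q = u_q^- B_q$ and Lusztig's structure theory of $U_q^-$. The only cosmetic difference is that you invoke the Frobenius factorization $U_q^- \cong U_q^{-,\mathrm{Fr}} \otimes_\C u_q^-$ and a PBW-monomial basis for $U_q^{-,\mathrm{Fr}}$, whereas the paper phrases the same input as the statement that $Fr_q\colon U_q^- \twoheadrightarrow U_\C^-$ has kernel generated by the augmentation ideal of $u_q^-$; these are two formulations of the same fact from \cite[\S 8]{Lu90b}, so the proofs are equivalent in content.
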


\begin{proof}
The map is clearly both well defined and a $U_q$-homomorphism. Note that $u_qB_q = u_q^-B_q$ and that the restriction of $Fr_q$ to $U_q^-$ is a surjection onto $U_\C^-$ with kernel 
generated by the augmentation ideal of $u_q^-$. It follows that the two modules in question are both isomorphic as 
$\C$-spaces to $U_\C^- \otimes M$ 
with the claimed map identifying the two.
\end{proof}

\begin{prop}\label{prop3c}
For $\lambda \in X$ the Verma module $\Delta_q(\lambda)$ has a filtration in $\mathcal O_q$ with quotients
of the form $\Delta_\C(\mu^1)^{[l]} \otimes \tilde L_q(\mu^0)$, $\mu \in X$. Each quotient $\Delta_\C(\mu^1)^{[l]} \otimes \tilde L_q(\mu^0)$
occurs $[\tilde \Delta_q(\lambda):\tilde L_q(\mu)]$ times.
\end{prop}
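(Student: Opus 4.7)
The plan is to recognize $\Delta_q(\lambda)$ as induced from the Hopf subalgebra $u_qB_q$, transport a composition series of $\tilde\Delta_q(\lambda)$ as a $u_qB_q$-module up to $U_q$, and then identify the induced quotients using Lemma \ref{lemma3b} together with the Hopf-algebraic tensor identity. By transitivity of induction,
$\Delta_q(\lambda)=U_q\otimes_{B_q}\C_\lambda\simeq U_q\otimes_{u_qB_q}(u_qB_q\otimes_{B_q}\C_\lambda)=U_q\otimes_{u_qB_q}\tilde\Delta_q(\lambda)$.
The functor $U_q\otimes_{u_qB_q}-$ is exact because $U_q$ is free as a right $u_qB_q$-module: the triangular decomposition gives $U_q\simeq U_q^-\otimes B_q$ and $u_qB_q\simeq u_q^-\otimes B_q$, and Lusztig's divided-power PBW expresses $U_q^-$ as a free right $u_q^-$-module on the subalgebra generated by the $F_\beta^{(l)}$. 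Since $\tilde\Delta_q(\lambda)$ is finite dimensional (of dimension $l^N$) it has a composition series in which each simple $\tilde L_q(\mu)$ occurs exactly $[\tilde\Delta_q(\lambda):\tilde L_q(\mu)]$ times; applying $U_q\otimes_{u_qB_q}-$ produces the desired filtration of $\Delta_q(\lambda)$ in $\mathcal O_q$, with the right multiplicities, provided we can identify each induced quotient $U_q\otimes_{u_qB_q}\tilde L_q(\mu)$ with $\Delta_\C(\mu^1)^{[l]}\otimes\tilde L_q(\mu^0)$.

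To identify the quotients, I would first observe the $u_qB_q$-isomorphism $\tilde L_q(\mu)\simeq L_q(\mu^0)\otimes\C_{\mu^1}^{[l]}$: the right-hand side is a $U_q$-module (hence a $u_qB_q$-module) whose restriction to $u_qU_q^0$ equals $\hat L_q(\mu)$ by \eqref{eq2} and is therefore simple, while Remark \ref{rem2} gives $\tilde L_q(\mu)|_{u_qU_q^0}=\hat L_q(\mu)$, so the two simple $u_qB_q$-modules of common highest weight $\mu=\mu^0+l\mu^1$ coincide. Next, the standard Hopf-algebraic tensor identity for the Hopf subalgebra $u_qB_q\subset U_q$ gives
\begin{displaymath}
U_q\otimes_{u_qB_q}\bigl(V|_{u_qB_q}\otimes W\bigr)\;\simeq\;V\otimes\bigl(U_q\otimes_{u_qB_q}W\bigr)
\end{displaymath}
for any $U_q$-module $V$ and $u_qB_q$-module $W$, induced by the rule $u\otimes(v\otimes w)\mapsto\sum u_{(1)}v\otimes(u_{(2)}\otimes w)$. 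Taking $V=L_q(\mu^0)$ and $W=\C_{\mu^1}^{[l]}$, and invoking Lemma \ref{lemma3b} with $M=\C_{\mu^1}$ to get $U_q\otimes_{u_qB_q}\C_{\mu^1}^{[l]}\simeq\Delta_\C(\mu^1)^{[l]}$, we conclude
$U_q\otimes_{u_qB_q}\tilde L_q(\mu)\simeq L_q(\mu^0)\otimes\Delta_\C(\mu^1)^{[l]}\simeq\Delta_\C(\mu^1)^{[l]}\otimes\tilde L_q(\mu^0)$,
where the last identification uses $\tilde L_q(\mu^0)=L_q(\mu^0)$ for $\mu^0\in X_l$.

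The skeletal argument (transitivity of induction, composition series of $\tilde\Delta_q(\lambda)$ turned into a filtration of $\Delta_q(\lambda)$) is routine; the real content lies in the three Hopf-algebraic inputs: the freeness of $U_q$ over $u_qB_q$ (Lusztig's divided-power PBW), the tensor identity (which requires $u_qB_q$ to be a genuine Hopf subalgebra so that the coproduct-defined map is well defined on $\otimes_{u_qB_q}$), and the lifting of \eqref{eq2} from $u_qU_q^0$ to $u_qB_q$. These are the main obstacles, and once they are in place the quotients of the filtration fall out mechanically.
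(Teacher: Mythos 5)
Your proof is correct and follows essentially the same route as the paper: apply the exact functor $U_q\otimes_{u_qB_q}{}_-$ to a composition series of $\tilde\Delta_q(\lambda)$, and identify the quotients via the tensor identity and Lemma~\ref{lemma3b}. You spell out some points the paper leaves implicit — the freeness of $U_q$ over $u_qB_q$ underlying exactness, and the precise derivation of $\tilde L_q(\mu)\simeq L_q(\mu^0)\otimes\C_{\mu^1}^{[l]}$ as $u_qB_q$-modules from \eqref{eq2} and Remark~\ref{rem2} — but the logical skeleton is identical.
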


\begin{proof}
Consider a composition series of $\tilde \Delta_q(\lambda)$
\begin{displaymath}
0 = F^r \subset F^{r-1} \subset \cdots \subset F^0 = \tilde \Delta_q(\lambda)
\end{displaymath}
with quotients $F^{i-1}/F^i \simeq \tilde L_q(\mu_i)$. When we apply the exact functor $U_q \otimes _{u_qB_q} {}_-$ 
we obtain the filtration 
\begin{displaymath}
0 = U_q \otimes _{u_qB_q} F^r \subset U_q \otimes _{u_qB_q} F^{r-1} \subset \cdots \subset U_q \otimes _{u_qB_q} F^0 
\end{displaymath}
of $U_q \otimes _{u_qB_q} \tilde \Delta_q(\lambda) \simeq
\Delta_q(\lambda)$ with quotients $U_q \otimes _{u_qB_q} \tilde L_q(\mu_i)$. Now we recall 
from Section \ref{s3.2} that for any $\mu \in X$ we have $\tilde L_q(\mu) \simeq \C_{l\mu^1} \otimes L_q(\mu^0)$. By the tensor identity we have
\begin{displaymath}
U_q \otimes _{u_qB_q} \tilde L_q(\mu) \simeq (U_q \otimes _{u_qB_q} \C_{l\mu}) \otimes L_q(\mu^0). 
\end{displaymath}
Finally, Lemma~\ref{lemma3b} shows that $U_q \otimes _{u_qB_q} \C_{l\mu} 
\simeq \Delta_\C(\mu^1)^{[l]}$ and the proposition follows.
\end{proof}

Recall that modules in $\mathcal O$ have finite 
composition series (see \cite[Chapter~7]{Di}). 
Moreover, by Proposition \ref{prop3} 
the composition factors of 
$\Delta_\C(\mu^1)^{[l]} \otimes L_q(\mu^0)$ are $L_\C(\nu^1)^{[l]} \otimes L_q(\mu^0) \simeq L_q(l \nu^1 + \mu^0)$ (occurring $[\Delta_\C(\mu^1) : L_\C(\nu^1)]$ times). We thus have

\begin{cor}\label{cor3d}
For every  $\lambda \in X$ the Verma module 
$\Delta_q(\lambda)$ has finite length. 
Moreover, for $\mu \in X$ we have
\begin{displaymath}
[\Delta_q(\lambda) : L_q(\mu)] = \sum_{\nu \geq \mu^1} [\tilde \Delta_q(\lambda) :\tilde L_q(l\nu + \mu^0)][\Delta_\C(\nu):L_\C(\mu^1)].
\end{displaymath}
\end{cor}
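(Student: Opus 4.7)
The plan is to combine the filtration of $\Delta_q(\lambda)$ given by Proposition~\ref{prop3c} with composition series of the classical Verma modules $\Delta_\C(\nu) \in \mathcal O_{\mathrm{int}}$, and then read off the multiplicities using uniqueness of the $l$-adic expansion on $X$.

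I would first verify finite length. Proposition~\ref{prop3c} gives a filtration of $\Delta_q(\lambda)$ whose successive subquotients are modules of the form $\Delta_\C(\tau^1)^{[l]} \otimes L_q(\tau^0)$, each occurring $[\tilde\Delta_q(\lambda):\tilde L_q(\tau)]$ times (I tacitly identify $\tilde L_q(\tau^0)$ with the $U_q$-module $L_q(\tau^0)$ via the tensor identity used in the proof of that proposition). Only finitely many such subquotients occur, because $\tilde\Delta_q(\lambda)$ is a $u_q B_q$-module of dimension $l^N$ and hence has finite composition length. For each outer subquotient I would refine using a composition series
\[
0 = M_0 \subset M_1 \subset \cdots \subset M_k = \Delta_\C(\tau^1),\qquad M_j/M_{j-1} \simeq L_\C(\eta_j),
\]
which is finite by classical BGG theory \cite[Chapter~7]{Di}. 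Applying $(-)^{[l]}$ and tensoring with $L_q(\tau^0)$ produces inner subquotients $L_\C(\eta_j)^{[l]} \otimes L_q(\tau^0)$ which, by Theorem~\ref{prop3} applied to the weight $l\eta_j + \tau^0$ (whose $l$-adic digits are $\tau^0$ and $\eta_j$), are simple and isomorphic to $L_q(l\eta_j + \tau^0)$. The combined refinement is thus a finite composition series of $\Delta_q(\lambda)$, and finite length follows.

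To compute the multiplicity of a fixed $L_q(\mu)$, observe that an inner subquotient $L_q(l\eta + \tau^0)$ is isomorphic to $L_q(\mu)$ precisely when $l\eta + \tau^0 = l\mu^1 + \mu^0$, which by uniqueness of the $l$-adic expansion forces $\tau^0 = \mu^0$ and $\eta = \mu^1$. The contributing outer indices $\tau$ are therefore those of the form $l\tau^1 + \mu^0$ with $\tau^1 \in X$ arbitrary, and for each such $\tau$ only one inner subquotient contributes. Multiplying the outer multiplicity $[\tilde\Delta_q(\lambda):\tilde L_q(l\tau^1 + \mu^0)]$ by the inner multiplicity $[\Delta_\C(\tau^1):L_\C(\mu^1)]$, summing over $\tau^1 \in X$ (renamed to $\nu$), and noting that $[\Delta_\C(\nu):L_\C(\mu^1)] = 0$ unless $\nu \geq \mu^1$, yields exactly the formula in the statement.

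The one delicate point is the double indexing: the outer index $\tau$ is responsible for the ``baby Verma'' digit $\mu^0$ (via $\tau^0 = \mu^0$) together with the auxiliary parameter $\tau^1 = \nu$ over which we sum, while the inner index $\eta$ selects the ``Frobenius'' digit $\mu^1$ (via $\eta = \mu^1$). Once $l$-adic uniqueness pins these down, both assertions of the corollary drop out by direct counting, with no further input required.
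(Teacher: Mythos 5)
Your argument is correct and follows essentially the same route as the paper: take the filtration of $\Delta_q(\lambda)$ from Proposition~\ref{prop3c}, refine each subquotient $\Delta_\C(\tau^1)^{[l]} \otimes L_q(\tau^0)$ using a finite composition series of $\Delta_\C(\tau^1)$ in $\mathcal O_{\mathrm{int}}$, and identify the resulting simple factors via Theorem~\ref{prop3} and uniqueness of the $l$-adic expansion. The paper merely compresses the refinement and the multiplicity bookkeeping into one sentence, whereas you have spelled them out explicitly; the content is identical.
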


\begin{cor}\label{prop6}
All modules in $\mathcal O_q$ have finite length.
\end{cor}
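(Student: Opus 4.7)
The plan is to reduce to Corollary~\ref{cor3d}, which gives finite length of Verma modules, by exhibiting every $M \in \mathcal O_q$ as a quotient of a module that is built out of finitely many Verma modules.

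First I would use the defining conditions of $\mathcal O_q$ to manufacture a finite-dimensional $B_q$-submodule that generates $M$. By condition (\ref{cond1}) pick finitely many generators $m_1,\ldots,m_k$ of $M$. Condition (\ref{cond3}) makes each $U_q^+ m_i$ finite-dimensional, and since $M$ is a weight module (condition (\ref{cond2})), $U_q^+ m_i$ is stable under $U_q^0$. Thus $V := \sum_i U_q^+ m_i$ is a finite-dimensional $B_q$-submodule of $M$ containing the $m_i$, and the canonical map
\begin{displaymath}
U_q \otimes_{B_q} V \longrightarrow M, \qquad u \otimes v \mapsto uv,
\end{displaymath}
is surjective.

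Next I would argue that $V$ admits a $B_q$-composition series whose subquotients are one-dimensional. Since $V$ is finite-dimensional and decomposes into $U_q^0$-weight spaces $V_\mu$, the weights appearing in $V$ form a finite set. The divided-power generators $E_i^{(r)}$, $r>0$, of the augmentation ideal of $U_q^+$ raise weights by $r\alpha_i$, so on $V$ they act nilpotently; consequently the augmentation ideal of $U_q^+$ acts nilpotently on $V$. A standard filtration argument then yields a chain $0 = V^0 \subset V^1 \subset \cdots \subset V^n = V$ of $B_q$-submodules with one-dimensional quotients $V^j/V^{j-1} \simeq \C_{\mu_j}$ for certain $\mu_j \in X$.

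Applying the exact functor $U_q \otimes_{B_q} (-)$ to this filtration produces a filtration of $U_q \otimes_{B_q} V$ whose subquotients are exactly the Verma modules $\Delta_q(\mu_j)$. By Corollary~\ref{cor3d} each $\Delta_q(\mu_j)$ has finite length, so $U_q \otimes_{B_q} V$ does, and therefore its quotient $M$ has finite length as well. The only mildly subtle point is the local nilpotence of the augmentation ideal of $U_q^+$ on $V$; this is really just a weight-count, but it is the one place where conditions (\ref{cond2}) and (\ref{cond3}) must be combined carefully, and everything else is a direct appeal to results already established.
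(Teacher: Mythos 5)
Your proof is correct and follows essentially the same route as the paper: realize $M$ as a quotient of $U_q \otimes_{B_q} V$ for a finite-dimensional $B_q$-submodule $V$ provided by conditions (I)--(III), filter $V$ with one-dimensional $B_q$-quotients, apply the exact induction functor to get a finite Verma filtration, and invoke Corollary~\ref{cor3d}. The paper streamlines by first reducing to cyclic modules and leaves the existence of the $B_q$-filtration implicit, but the argument is the same.
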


\begin{proof} 
By Condition \eqref{cond1} of $\mathcal O_q$ it is enough to establish this for cyclic modules $M$, i.e. we assume $M = U_q m$ for some
$m \in M$. By Conditions \eqref{cond2} and \eqref{cond3}, $m$ is contained in a finite dimensional $B_q$-submodule $E \subset M$. This means that $M$ is a quotient of $U_q
\otimes_{B_q} E$ which has a finite Verma filtration (take a $B_q$-filtration of $E$ with $1$-dimensional quotients and apply the exact functor $U_q \otimes _{B_q} -$). It is therefore
enough to check that Verma modules in $\mathcal O_q$ have finite length. We did this in 
Corollary \ref{cor3d}.
\end{proof}

For later use we record the following consequence of Corollary \ref{cor3d}

\begin{cor}\label{cor3e}
Let $\lambda, \mu \in X$. Then for $l \gg 0$ we have 
\begin{displaymath}
[\Delta_q(\lambda) : L_q(\mu)] = [\tilde \Delta_q(\lambda): \tilde L_q(\mu)].
\end{displaymath}
\end{cor}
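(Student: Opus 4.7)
The plan is to reduce the claim directly to Corollary~\ref{cor3d}, which already expresses
\[
[\Delta_q(\lambda) : L_q(\mu)] = \sum_{\nu \geq \mu^1} [\tilde \Delta_q(\lambda) : \tilde L_q(l\nu + \mu^0)]\,[\Delta_\C(\nu) : L_\C(\mu^1)].
\]
Since $l\mu^1 + \mu^0 = \mu$ and $[\Delta_\C(\mu^1) : L_\C(\mu^1)] = 1$, the summand with $\nu = \mu^1$ contributes exactly $[\tilde \Delta_q(\lambda) : \tilde L_q(\mu)]$. So the task is to show that, once $l$ is taken large enough relative to $\lambda$ and $\mu$, every summand with $\nu \neq \mu^1$ vanishes. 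Without loss of generality we may assume $\lambda - \mu$ is a non-negative integer combination of positive roots, since otherwise both sides of the desired equality are zero.

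Next, I would extract two order-theoretic constraints on any contributing $\nu$. The first factor $[\tilde \Delta_q(\lambda) : \tilde L_q(l\nu + \mu^0)]$ is non-zero only if $l\nu + \mu^0$ is a weight of $\tilde\Delta_q(\lambda)$, which forces $\lambda - (l\nu + \mu^0)$ to be a non-negative integer combination of positive roots. The second factor is non-zero only if $L_\C(\mu^1)$ is a weight-subquotient of $\Delta_\C(\nu)$, forcing $\nu - \mu^1$ to be a non-negative integer combination of positive roots. Substituting $\mu = l\mu^1 + \mu^0$ into the first condition rewrites it as
\[
(\lambda - \mu) - l(\nu - \mu^1) \in \Z_{\geq 0}\{\text{positive roots}\}.
\]

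The key step is then a height estimate. Let $h$ denote the height of $\lambda - \mu$ (its coefficient sum when written in simple roots); this is a fixed integer depending only on $\lambda$ and $\mu$. If $\nu \neq \mu^1$, then $\nu - \mu^1$ is a non-zero non-negative integer combination of positive roots, hence has height at least $1$, so $l(\nu - \mu^1)$ has height at least $l$. The containment above then forces $l \leq h$. Thus as soon as $l > h$, no summand with $\nu \neq \mu^1$ can be non-zero, only the $\nu = \mu^1$ term survives, and the corollary follows. I do not foresee any real obstacle beyond this elementary height bookkeeping in the root lattice; the content of the argument is entirely carried by Corollary~\ref{cor3d}.
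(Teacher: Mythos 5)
Your proof is correct and takes essentially the same route as the paper: both proofs reduce to Corollary~\ref{cor3d} and observe that for $l$ large enough relative to $\lambda-\mu$, only the $\nu=\mu^1$ term in the sum can survive, the paper phrasing the constraint as ``choose $l$ so big that $\lambda-\mu\not\geq l\nu$ for any $\nu>0$'' while you make the same point via the height of $\lambda-\mu$.
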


\begin{proof}
Choose $l$ so big that $\lambda - \mu \not \geq l\nu$ for any $\nu > 0$. Then the sum on the right hand side of the formula in Corollary \ref{cor3d} contains only 
one term, namely the term with $\nu = \mu^1$.
\end{proof}

\subsection{Special modules in $\mathcal O_q$}\label{s3.5}
     
\begin{prop}\label{prop4}
Let $\lambda \in X$. Then 
$\Delta_q(l \lambda + (l-1)\rho) \simeq  \Delta_\C(\lambda)^{[l]} \otimes St_l$.
\end{prop}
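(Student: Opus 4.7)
The plan is to deduce the result directly from Proposition~\ref{prop3c}. Set $\mu = l\lambda + (l-1)\rho$. Since $\langle\rho,\alpha_i^\vee\rangle = 1$ for every simple coroot, the weight $(l-1)\rho$ lies in $X_l$, and the $l$-adic expansion of $\mu$ reads $\mu^0 = (l-1)\rho$ and $\mu^1 = \lambda$. Proposition~\ref{prop3c} then yields a filtration of $\Delta_q(\mu)$ whose subquotients are the modules $\Delta_\C(\nu^1)^{[l]} \otimes \tilde L_q(\nu^0)$, each appearing with multiplicity $[\tilde\Delta_q(\mu):\tilde L_q(\nu)]$. The candidate target $\Delta_\C(\lambda)^{[l]} \otimes St_l$ is precisely the quotient corresponding to $\nu = \mu$, since $\tilde L_q((l-1)\rho) = L_q((l-1)\rho) = St_l$ via the identification $\tilde L_q(\nu) \simeq \C_{l\nu^1} \otimes L_q(\nu^0)$ recalled in Remark~\ref{rem2}. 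Hence it suffices to show that no other quotient can occur, i.e.\ that $\tilde\Delta_q(\mu)$ is already simple.

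To prove this simplicity I would restrict to the small quantum group $u_q$. Because $q^l = 1$, the character $\chi_\mu$ of $U_q^0$ agrees with $\chi_{\mu^0}$ after restriction to $u_q^0$, so the triangular decomposition of $u_qB_q$ identifies the $u_q$-restriction of $\tilde\Delta_q(\mu)$ with the baby Verma module $\bar\Delta_q(\mu^0) = \bar\Delta_q((l-1)\rho)$. By the discussion of the quantum Steinberg module in Section~\ref{s3.2}, this is exactly $St_l$, which is simple. Any $u_qB_q$-submodule of $\tilde\Delta_q(\mu)$ is in particular a $u_q$-submodule, and the simplicity of the $u_q$-restriction therefore forces $\tilde\Delta_q(\mu)$ itself to be simple, so $\tilde\Delta_q(\mu) = \tilde L_q(\mu)$.

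With the claim in hand, the multiplicity $[\tilde\Delta_q(\mu):\tilde L_q(\nu)]$ equals $\delta_{\nu,\mu}$; the filtration of Proposition~\ref{prop3c} collapses to the single quotient $\Delta_\C(\lambda)^{[l]} \otimes St_l$, and the desired isomorphism follows. The step I expect to require the most care is the simplicity of $\tilde\Delta_q(\mu)$: although it boils down to the well-known simplicity of $St_l$, one must verify carefully that $u_q^0$ cannot distinguish $\mu$ from $\mu^0$ and that simplicity descends from $u_q$ to $u_qB_q$. Once this is in place the rest of the argument is purely formal bookkeeping.
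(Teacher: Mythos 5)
Your argument is correct, but it takes a genuinely different route from the paper's proof. The paper applies the exact functor $U_q\otimes_{u_qB_q}{}_-$ directly to the identification $\tilde\Delta_q(l\lambda+(l-1)\rho)\simeq \C_{l\lambda}\otimes St_l$ (quoted from \cite[Lemma~2.6]{APW92}), then uses the tensor identity and Lemma~\ref{lemma3b}; this is the same computation as in the proof of Proposition~\ref{prop3c}, rerun once more with the simple module $St_l$ in place of an arbitrary composition factor. You instead invoke the \emph{conclusion} of Proposition~\ref{prop3c} and collapse the resulting filtration by showing that $\tilde\Delta_q(\mu)$ is simple. Both routes draw on the same underlying fact, namely the irreducibility of the (baby) Steinberg module, and your version has the merit of isolating that fact as the single nontrivial input, at the cost of one extra layer of generality from Proposition~\ref{prop3c}.

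One small imprecision you should repair: you restrict all the way down to $u_q$ and then assert that $\bar\Delta_q((l-1)\rho)$ ``is exactly $St_l$, which is simple'', citing Section~\ref{s3.2}. What Section~\ref{s3.2} actually records is the equality $\hat\Delta_q((l-1)\rho)=\hat L_q((l-1)\rho)=St_l$, i.e.\ simplicity as a $u_qU_q^0$-module; simplicity of $\bar\Delta_q((l-1)\rho)$ as a $u_q$-module is a strictly stronger statement (a simple $u_qU_q^0$-module need not restrict to a simple $u_q$-module a priori), which is true but not established in the text you quote. The cleanest fix is not to go all the way to $u_q$: restrict $\tilde\Delta_q(\mu)$ only to $u_qU_q^0$, giving $\hat\Delta_q(\mu)$. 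Its simple quotient is $\hat L_q(\mu)\simeq L_q(\mu^0)\otimes\C_{\mu^1}^{[l]}$ by \eqref{eq2}, and since $\mu^0=(l-1)\rho$ this quotient has dimension $l^N=\dim\hat\Delta_q(\mu)$; hence $\hat\Delta_q(\mu)=\hat L_q(\mu)$ is already simple over $u_qU_q^0$, so $\tilde\Delta_q(\mu)$ is simple over $u_qB_q$. This stays entirely within what Section~\ref{s3.2} states and removes the need for the extra descent to $u_q$. With that adjustment the rest of your bookkeeping is fine.
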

   
\begin{proof}
We have $\tilde \Delta_q((l-1)\rho) \simeq St_l$, see \cite[Lemma~2.6]{APW92}. Just as in the proof of Proposition \ref{prop3c} we then get
\begin{multline*}
\Delta_q(l \lambda + (l-1)\rho) \simeq U_q \otimes _{u_qB_q} \tilde \Delta_q(l\lambda + (l-1)\rho)
\simeq\\ (U_q \otimes _{u_qB_q} \otimes \C_{l \lambda}) \otimes St_l \simeq \Delta_\C(\lambda)^{[l]} \otimes St_l.
\end{multline*}
\end{proof}

\begin{cor}\label{cor5}
If $\lambda$ is antidominant then $\Delta_q(l \lambda + (l-1)\rho)$ is simple. 
In particular, $\Delta_q(-\rho)$ is simple.
\end{cor}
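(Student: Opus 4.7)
The plan is to combine Proposition~\ref{prop4} with the tensor product decomposition of simple modules in Theorem~\ref{prop3}, leveraging the classical fact that antidominant Verma modules in $\mathcal O$ are simple.

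By Proposition~\ref{prop4} one has $\Delta_q(l\lambda + (l-1)\rho) \simeq \Delta_\C(\lambda)^{[l]} \otimes St_l$. Since $\lambda$ is antidominant, i.e.\ $\lambda + \rho \in -X^+$, a classical theorem for category $\mathcal O$ (see e.g.\ \cite[Chapter~7]{Di}) asserts that $\Delta_\C(\lambda) = L_\C(\lambda)$. Substituting, the Verma module is isomorphic to $L_\C(\lambda)^{[l]} \otimes St_l$.

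To conclude I would recognize this tensor product as a simple $U_q$-module via Theorem~\ref{prop3}. Writing $\rho$ in the basis of fundamental weights, all its coordinates equal $1$, so $(l-1)\rho \in X_l$, and therefore the $l$-adic decomposition of $\mu := l\lambda + (l-1)\rho$ is $\mu^0 = (l-1)\rho$, $\mu^1 = \lambda$. Because $St_l = L_q((l-1)\rho)$, Theorem~\ref{prop3} gives
\[ L_q(l\lambda + (l-1)\rho) \simeq L_\C(\lambda)^{[l]} \otimes L_q((l-1)\rho) = L_\C(\lambda)^{[l]} \otimes St_l, \]
so $\Delta_q(l\lambda + (l-1)\rho) \simeq L_q(l\lambda + (l-1)\rho)$ is simple. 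For the ``in particular'' clause, take $\lambda = -\rho$: it is antidominant since $\lambda + \rho = 0 \in -X^+$, and one checks $l(-\rho) + (l-1)\rho = -\rho$, reducing to the general case. The only nontrivial ingredient is the classical simplicity of antidominant Vermas in $\mathcal O$; the rest is bookkeeping for the $l$-adic expansion and direct assembly of earlier results.
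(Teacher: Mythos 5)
Your proposal is correct and follows the same route the paper intends: combine Proposition~\ref{prop4} with the classical simplicity of antidominant Verma modules in $\mathcal O$ and then recognize $L_\C(\lambda)^{[l]}\otimes St_l$ as $L_q(l\lambda+(l-1)\rho)$ via Theorem~\ref{prop3}. The paper's own proof only cites the classical simplicity explicitly and leaves the rest implicit; you have merely filled in the bookkeeping the paper omits.
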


\begin{proof}
It is well known (see e.g. \cite[Chapter~7]{Di} or \cite[Theorem~4.4]{Hu}) that $\Delta_\C(\lambda)$ is simple 
in $\mathcal O$  when $\lambda$ is antidominant.
\end{proof}

\subsection{The special block in $\mathcal O_q$}\label{s3.6}

The considerations at the beginning of Section~\ref{s3.3} allow us to define a functor $\mathrm{F} : \mathcal O_q \to \mathcal{O}_{\mathrm{int}}$ by 
\begin{displaymath}
\mathrm{F}\, N = \Hom_{u_q}(St_l, N)^{[-l]} .
\end{displaymath}
Since $St_l$ is projective as a $u_q$-module $\mathrm{F}$ is exact. 

Note that the map $f \otimes s \mapsto f(s)$ is a homomorphism and in fact an inclusion 
$(\mathrm{F}\,  N)^{[l]} \otimes St_l \to N$.  The considerations in Section~\ref{s3.3} prove
the following:

\begin{prop}\label{prop7}
Let $\lambda \in X$. Then
\begin{displaymath}
\mathrm{F}\, L_q(\lambda) \simeq 
\begin{cases} L_\C(\lambda^1), &\text { if } \lambda = l\lambda^1 + (l-1)\rho; \\ 0,
&\text { if } l \text { does not divide }\lambda + \rho.
\end{cases}
\end{displaymath}
\end{prop}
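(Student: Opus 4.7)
The plan is to reduce the computation to Schur's lemma by combining the tensor decomposition of Theorem~\ref{prop3} with the $u_q$-triviality of any Frobenius twist. Starting from $L_q(\lambda) \simeq L_\C(\lambda^1)^{[l]} \otimes L_q(\lambda^0)$, the definition of $\mathrm{F}$ gives
\[
\mathrm{F}\, L_q(\lambda) = \Hom_{u_q}\bigl(St_l,\ L_\C(\lambda^1)^{[l]} \otimes L_q(\lambda^0)\bigr)^{[-l]}.
\]
Since $u_q$ acts trivially on any module of the form $M^{[l]}$, this Hom-space splits $U_q$-equivariantly as $L_\C(\lambda^1)^{[l]} \otimes_\C \Hom_{u_q}(St_l, L_q(\lambda^0))$, reducing everything to the computation of the second factor.

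Next I would use the parametrization of simple $u_q$-modules recalled in Section~\ref{s3.2}: for $\lambda^0 \in X_l$, the restriction of $L_q(\lambda^0)$ to $u_q$ is the simple $u_q$-module with highest weight $\lambda^0$ (this is precisely the content of \eqref{eq2} applied to $\hat L_q(\lambda^0)$ and then restricted). Similarly $St_l$ is the simple $u_q$-module with highest weight $(l-1)\rho \in X_l$. Since $u_q$ is a finite-dimensional algebra over $\C$ and both modules are simple, Schur's lemma gives
\[
\Hom_{u_q}(St_l, L_q(\lambda^0)) \simeq \begin{cases} \C, & \lambda^0 = (l-1)\rho,\\ 0, & \text{otherwise.}\end{cases}
\]

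Untwisting concludes the argument. When $\lambda^0 = (l-1)\rho$, equivalently $\lambda = l\lambda^1 + (l-1)\rho$, we obtain $\mathrm{F}\, L_q(\lambda) \simeq (L_\C(\lambda^1)^{[l]})^{[-l]} = L_\C(\lambda^1)$. Otherwise $\lambda^0 \ne (l-1)\rho$, and since $\lambda^0_i \in \{0, \dots, l-1\}$ while the coordinates of $\rho$ equal $1$, this is precisely the condition that $l$ does not divide $\lambda+\rho$; in that case $\mathrm{F}\, L_q(\lambda) = 0$.

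The only non-routine step is verifying that the Hom-splitting in the first paragraph is indeed an isomorphism of $U_q$-modules (and not merely of $\C$-vector spaces). This follows from the construction of the $U_q$-action on $\Hom_\C$ and on $\Hom_{u_q}$ recalled in Section~\ref{s3.3}, together with the fact that the coproduct on $U_q$ is compatible with $Fr_q$ (so the $U_q$-action on $L_\C(\lambda^1)^{[l]} \otimes L_q(\lambda^0)$ inherited via $\Delta$ factors through the natural actions on each tensor factor). Once this is granted, everything else is a direct consequence of Theorem~\ref{prop3} and Schur's lemma.
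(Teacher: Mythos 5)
Your proof is correct and follows essentially the same route as the paper's: the paper's entire proof is the single remark that ``the considerations in Section~\ref{s3.3} prove the following,'' and those considerations amount to the $U_q$-equivariant identification of $\Hom_{u_q}(St_l,M)$ with the $St_l$-multiplicity space of the $u_q$-socle, which combined with Theorem~\ref{prop3} gives the claim. You merely make explicit the $u_q$-triviality of the twisted factor and the Schur's-lemma computation that the paper leaves to the reader.
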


This is a key ingredient in the following:

\begin{prop}\label{prop8}
Let $\lambda, \mu \in X$. Suppose $\lambda^0 = (l-1)\rho \neq \mu^0$. 
Then $\Ext^i_{\mathcal O_q}(L_q(\lambda), L_q(\mu)) = 0$ for all $i$.
\end{prop}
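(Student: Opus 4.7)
Since $\lambda^0 = (l-1)\rho$, Theorem~\ref{prop3} gives $L_q(\lambda) \simeq L_\C(\lambda^1)^{[l]} \otimes St_l$, which I will write as $T(L_\C(\lambda^1))$, where $T : \mathcal{O}_{\mathrm{int}} \to \mathcal{O}_q$ denotes the functor $T(Q) = Q^{[l]} \otimes St_l$. The plan is to compute $\Ext^i_{\mathcal{O}_q}(L_q(\lambda), L_q(\mu))$ by upgrading the adjunction \eqref{eq3} (with $M = St_l$), which reads
\begin{displaymath}
\Hom_{\mathcal{O}_q}(T(Q), N) \simeq \Hom_{\mathcal{O}_{\mathrm{int}}}(Q, \mathrm{F}\, N),
\end{displaymath}
to an identification of the corresponding Ext-groups, and then to kill the answer using Proposition~\ref{prop7}.

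The key technical point is that $T$ preserves projectives. Indeed, $T$ is exact (both the Frobenius twist and $\otimes_{\C} St_l$ are exact), and its right adjoint $\mathrm{F}$ is exact by Section~\ref{s3.6}; hence, for any projective $P \in \mathcal{O}_{\mathrm{int}}$, the functor $\Hom_{\mathcal{O}_q}(T(P), -) \simeq \Hom_{\mathcal{O}_{\mathrm{int}}}(P, \mathrm{F}(-))$ is exact, so $T(P)$ is projective in $\mathcal{O}_q$. Choosing a projective resolution $P_\bullet \to L_\C(\lambda^1)$ in $\mathcal{O}_{\mathrm{int}}$ (which exists by the standard theory of category $\mathcal{O}$) and applying the exact functor $T$, one obtains a projective resolution $T(P_\bullet) \to L_q(\lambda)$ in $\mathcal{O}_q$.

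Applying $\Hom_{\mathcal{O}_q}(-, L_q(\mu))$ to this resolution and invoking the adjunction term by term yields a natural isomorphism
\begin{displaymath}
\Ext^i_{\mathcal{O}_q}(L_q(\lambda), L_q(\mu)) \simeq \Ext^i_{\mathcal{O}_{\mathrm{int}}}(L_\C(\lambda^1), \mathrm{F}\, L_q(\mu)).
\end{displaymath}
The hypothesis $\mu^0 \neq (l-1)\rho$ is equivalent to $l \nmid \mu + \rho$ (since $\rho$ is the sum of the fundamental weights and $\mu^0 \in X_l$), so Proposition~\ref{prop7} gives $\mathrm{F}\, L_q(\mu) = 0$ and every term on the right vanishes.

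The only place where anything could go wrong is the stability of projectives under $T$, i.e., translating the adjunction on $\Hom$ into an adjunction on $\Ext$; but this is formal once $\mathrm{F}$ is known to be exact, and the argument is then little more than a bookkeeping of adjoint functors combined with Proposition~\ref{prop7}.
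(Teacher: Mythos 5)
Your proof is correct and is essentially the paper's own argument, just spelled out in more detail: the paper obtains the isomorphism $\Ext^i_{\mathcal{O}_q}(L_q(\lambda), M)\simeq\Ext^i_{\mathcal{O}}(L_\C(\lambda^1),\mathrm{F}\,M)$ directly "via \eqref{eq3}" using the $u_q$-projectivity of $St_l$, and you unpack precisely why that step is legitimate (exactness of $\mathrm{F}$ implies $\mathrm{G}$ preserves projectives, so applying it to a projective resolution of $L_\C(\lambda^1)$ yields a projective resolution of $L_q(\lambda)$). The final vanishing via Proposition~\ref{prop7} and the translation of $\mu^0\neq(l-1)\rho$ into $l\nmid\mu+\rho$ are both exactly as in the paper.
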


\begin{proof}
As $L_q(\lambda) = L_\C(\lambda^1)^{[l]} \otimes St_l$ and $St_l$ is projective 
as a $u_q$-module, for any $M\in \mathcal O_q$ we get via \eqref{eq3}
\begin{displaymath}
\Ext^i_{\mathcal O_q}(L_q(\lambda), M) \simeq \Ext^i_{\mathcal O}(L_\C(\lambda^1), \mathrm{F}\, M). 
\end{displaymath}
When $M = L_q(\mu)$ we have $\mathrm{F}\, M = 0$ by Proposition~\ref{prop7} and the desired vanishing follows.
\end{proof}

Proposition~\ref{prop8} allows us to define $\mathcal O_q^{\mathrm{spe}c}$ to be the block in $\mathcal O_q$ 
consisting of those $M \in \mathcal O_q$ whose composition factors all belong to $l X +(l-1)\rho$.  
We call this the {\em special block} in $\mathcal O_q$ and its objects {\em special modules} in $\mathcal O_q$.

Define the functor $\mathrm{G}: \mathcal O \to \mathcal O_q^{\mathrm{spec}}$ by
\begin{displaymath}
\mathrm{G}\,N = N^{[l]} \otimes St_l.
\end{displaymath}
Note that for $N \in \mathcal O$ we have indeed that $\mathrm{G}\,N$ is a special module in $\mathcal O_q$. 

Clearly, $\mathrm{G}$ is exact and is in fact adjoint (left and right) to $\mathrm{F}$.
It is also immediate that $\mathrm{F} \circ \mathrm{G}$ is isomorphic to the identity functor on $\mathcal O$.  
Moreover, by Theorem~\ref{prop3} we have that $\mathrm{G} \circ \mathrm{F}$ is naturally isomorphic to
the identity on simple modules and hence on $\mathcal O_q^{\mathrm{spec}}$. We have thus proved the following: 

\begin{thm}\label{thm9}
There is  an equivalence of categories $\mathcal{O}_{\mathrm{int}} \cong \mathcal O_q^{\mathrm{spec}}$ given by 
the mutually inverse functors $\mathrm{F}$ and $\mathrm{G}$.
\end{thm}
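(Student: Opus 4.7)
The authors have reduced the statement to establishing the two natural isomorphisms $\mathrm{F}\circ\mathrm{G} \cong \mathrm{id}_{\mathcal{O}_{\mathrm{int}}}$ and $\mathrm{G}\circ\mathrm{F} \cong \mathrm{id}_{\mathcal{O}_q^{\mathrm{spec}}}$, so these are what I would verify. For $N \in \mathcal{O}_{\mathrm{int}}$, since $u_q$ acts trivially on $N^{[l]}$ we have
\begin{displaymath}
\Hom_{u_q}(St_l, N^{[l]} \otimes St_l) \cong N^{[l]} \otimes \End_{u_q}(St_l) \cong N^{[l]},
\end{displaymath}
the second isomorphism by Schur's lemma applied to the simple $u_q$-module $St_l$. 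Applying the twist $[-l]$ yields $\mathrm{F}\,\mathrm{G}\,N \cong N$, and naturality in $N$ is routine.

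For the opposite direction, take $\eta : \mathrm{G}\circ\mathrm{F} \to \mathrm{id}$ to be the counit of the adjunction $\mathrm{G} \dashv \mathrm{F}$ coming from \eqref{eq3} (obtained by setting $M = St_l$ there), realized concretely as the evaluation map
\begin{displaymath}
\eta_M : \mathrm{G}\mathrm{F}\,M = \Hom_{u_q}(St_l, M) \otimes St_l \longrightarrow M, \qquad f \otimes s \mapsto f(s).
\end{displaymath}
By Theorem~\ref{prop3} together with the calculation above, $\eta_{L_q(\lambda)}$ is an isomorphism for every simple module in $\mathcal{O}_q^{\mathrm{spec}}$, i.e.\ whenever $\lambda^0 = (l-1)\rho$: in that case $L_q(\lambda) \cong L_\C(\lambda^1)^{[l]} \otimes St_l$, so $\mathrm{F}\,L_q(\lambda) \cong L_\C(\lambda^1)$ and $\mathrm{G}\mathrm{F}\,L_q(\lambda) \cong L_q(\lambda)$, with $\eta_{L_q(\lambda)}$ visibly nonzero, hence an isomorphism between simple modules.

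To extend to an arbitrary $M \in \mathcal{O}_q^{\mathrm{spec}}$ I would induct on the composition length of $M$, which is finite by Corollary~\ref{prop6}. For $M$ of length $\geq 2$, pick a simple submodule $A \subset M$ and set $B = M/A$; both lie in $\mathcal{O}_q^{\mathrm{spec}}$ since a block is a Serre subcategory. Exactness of $\mathrm{F}$ (using projectivity of $St_l$ over $u_q$) and $\mathrm{G}$ together with the naturality of $\eta$ produces a commutative diagram with exact rows
\begin{displaymath}
\begin{array}{ccccccccc}
0 & \to & \mathrm{G}\mathrm{F}\,A & \to & \mathrm{G}\mathrm{F}\,M & \to & \mathrm{G}\mathrm{F}\,B & \to & 0 \\
& & \downarrow \eta_A & & \downarrow \eta_M & & \downarrow \eta_B & & \\
0 & \to & A & \to & M & \to & B & \to & 0,
\end{array}
\end{displaymath}
and the five lemma combined with the induction hypothesis forces $\eta_M$ to be an isomorphism.

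The main items requiring care, rather than difficulties, are verifying that $\mathrm{G}$ does land in $\mathcal{O}_q^{\mathrm{spec}}$ (the composition factors of $N^{[l]} \otimes St_l$ have highest weights in $lX + (l-1)\rho$ by Theorem~\ref{prop3}) and confirming that $\eta_M$ is $U_q$-linear and natural; both are automatic from its description as the counit of the adjunction \eqref{eq3}.
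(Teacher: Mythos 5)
Your proof is correct and follows the same outline as the paper's: establish the (bi)adjunction between $\mathrm{F}$ and $\mathrm{G}$, check $\mathrm{F}\circ\mathrm{G}\cong\mathrm{id}$ directly via Schur's lemma on the $u_q$-module $St_l$, and verify that the counit $\mathrm{G}\circ\mathrm{F}\to\mathrm{id}$ is an isomorphism on simple objects. The paper compresses the bootstrap from simples to all of $\mathcal{O}_q^{\mathrm{spec}}$ into the word ``hence'', and you have merely supplied the standard five-lemma/induction-on-length argument that this elides.
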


\subsection{Projective modules in $\mathcal O_q$}\label{s.3.7}

Recall that in $\mathcal O_{int}$ the Verma module $\Delta_\C(\lambda)$ is projective whenever $\lambda + \rho$ is dominant,
cf. \cite[Proposition 3.8]{Hu}.
Hence Theorem~\ref{thm9} gives

\begin{cor}\label{cor10}
If $\lambda +\rho $ is dominant, then $\Delta_q(l \lambda + (l-1)\rho)$ is projective in $\mathcal O_q$.
In particular, $\Delta_q(-\rho)$ is projective.
\end{cor}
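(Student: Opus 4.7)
The plan is to combine Proposition~\ref{prop4} with the block equivalence established in Theorem~\ref{thm9}. By Proposition~\ref{prop4} we have
\begin{displaymath}
\Delta_q(l\lambda+(l-1)\rho) \simeq \Delta_\C(\lambda)^{[l]} \otimes St_l = \mathrm{G}\,\Delta_\C(\lambda),
\end{displaymath}
so this Verma module lies in $\mathcal O_q^{\mathrm{spec}}$ (its highest weight is in $lX+(l-1)\rho$, and all its composition factors, being subquotients of a module of this form, must have the same property by a weight argument together with Theorem~\ref{prop3}).

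Next I would invoke the classical fact (cited in the excerpt from \cite[Proposition~3.8]{Hu}) that $\Delta_\C(\lambda)$ is projective in $\mathcal{O}_{\mathrm{int}}$ whenever $\lambda+\rho$ is dominant. Since $\mathrm{G}$ is part of an equivalence of categories by Theorem~\ref{thm9}, it sends projectives to projectives, so $\mathrm{G}\,\Delta_\C(\lambda)$ is projective in the block $\mathcal O_q^{\mathrm{spec}}$.

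The only remaining issue — and really the only step that requires a small argument beyond quoting previous results — is promoting projectivity in the block to projectivity in the full category $\mathcal O_q$. Because $\mathcal O_q^{\mathrm{spec}}$ is a block, i.e. a direct summand of $\mathcal O_q$ in the sense that $\mathrm{Ext}^1_{\mathcal O_q}$ vanishes between special and non-special modules (by Proposition~\ref{prop8}, which established precisely this kind of Ext-vanishing for the simple constituents, combined with the finite-length property from Corollary~\ref{prop6}), any short exact sequence $0 \to A \to E \to \mathrm{G}\,\Delta_\C(\lambda) \to 0$ in $\mathcal O_q$ splits on the non-special summand, so testing projectivity against arbitrary $\mathcal O_q$-modules reduces to testing against $\mathcal O_q^{\mathrm{spec}}$, where we already have it.

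For the "in particular" statement, we set $\lambda = -\rho$: then $l\lambda+(l-1)\rho = -\rho$, and $\lambda+\rho = 0$ is dominant, so $\Delta_q(-\rho)$ is projective. The main obstacle is essentially bookkeeping — confirming that the special block is genuinely a direct summand of $\mathcal O_q$ — but this is already guaranteed by Proposition~\ref{prop8} once one knows every module has finite length.
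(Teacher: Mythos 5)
Your proof is correct and follows essentially the same route as the paper: identify $\Delta_q(l\lambda+(l-1)\rho)$ with $\mathrm{G}\,\Delta_\C(\lambda)$ via Proposition~\ref{prop4}, recall that $\Delta_\C(\lambda)$ is projective in $\mathcal O_{\mathrm{int}}$ when $\lambda+\rho$ is dominant, and transport projectivity through the equivalence of Theorem~\ref{thm9}. The paper simply cites Theorem~\ref{thm9} and leaves implicit the step you spell out — that $\mathcal O_q^{\mathrm{spec}}$ is a direct summand of $\mathcal O_q$ (which the paper already asserts when defining the special block, using Proposition~\ref{prop8} together with its $\star$-dual version and finite length), so projectivity within the block upgrades to projectivity in all of $\mathcal O_q$.
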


More generally, let $\mu \in X$ and denote by $P_\C(\mu) \in \mathcal O$ a projective cover of $L_\C(\mu)$. Then 
Theorem~\ref{thm9} gives the following:

\begin{prop}\label{prop11}
For each $\lambda \in X$ the module $P_\C(\lambda)^{[l]} \otimes St_l$ is a projective cover of $L_q(l\lambda + (l-1)\rho)$ in $\mathcal O_q$.
\end{prop}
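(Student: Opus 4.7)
The plan is to deduce the statement directly from the categorical equivalence established in Theorem~\ref{thm9}, by recognising both the module in question and the simple module $L_q(l\lambda+(l-1)\rho)$ as the $\mathrm{G}$-images of their classical counterparts.

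First, I would note that, by definition of $\mathrm{G}$, the candidate module is nothing other than $\mathrm{G}\,P_\C(\lambda) = P_\C(\lambda)^{[l]}\otimes St_l$. Next, applying Theorem~\ref{prop3} to the weight $\mu=l\lambda+(l-1)\rho$, whose $l$-adic decomposition is $\mu^0=(l-1)\rho$ and $\mu^1=\lambda$, I obtain
\begin{displaymath}
L_q(l\lambda+(l-1)\rho) \simeq L_\C(\lambda)^{[l]}\otimes L_q((l-1)\rho) = L_\C(\lambda)^{[l]}\otimes St_l = \mathrm{G}\,L_\C(\lambda).
\end{displaymath}
In particular, the surjection $P_\C(\lambda)\twoheadrightarrow L_\C(\lambda)$ in $\mathcal{O}$ yields, after applying the exact functor $\mathrm{G}$, a surjection $\mathrm{G}\,P_\C(\lambda)\twoheadrightarrow L_q(l\lambda+(l-1)\rho)$ in $\mathcal O_q$.

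For projectivity, the cleanest route is via adjunction: for any $M\in\mathcal O_q$,
\begin{displaymath}
\Hom_{\mathcal O_q}(\mathrm{G}\,P_\C(\lambda),M) \simeq \Hom_{\mathcal O}(P_\C(\lambda),\mathrm{F}\,M).
\end{displaymath}
Since $St_l$ is projective over $u_q$, the functor $\mathrm{F}$ is exact; and since $P_\C(\lambda)$ is projective in $\mathcal O$ (hence in $\mathcal{O}_{\mathrm{int}}$), the right-hand side is an exact functor of $M$. Therefore $\mathrm{G}\,P_\C(\lambda)$ is projective in the whole category $\mathcal O_q$, not just in the block $\mathcal O_q^{\mathrm{spec}}$. (Alternatively, Proposition~\ref{prop8} guarantees that $\mathcal O_q^{\mathrm{spec}}$ is a direct summand of $\mathcal O_q$, and a projective of a summand block is projective in the ambient category.)

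It remains to see that $\mathrm{G}\,P_\C(\lambda)$ is indecomposable, for then, together with the surjection onto $L_q(l\lambda+(l-1)\rho)$, it must be the projective cover. Indecomposability is inherited through the equivalence of Theorem~\ref{thm9}: since $\mathrm{G}\,P_\C(\lambda)$ lies in $\mathcal O_q^{\mathrm{spec}}$, its endomorphism ring in $\mathcal O_q$ equals its endomorphism ring in $\mathcal O_q^{\mathrm{spec}}$, which is isomorphic to $\End_{\mathcal O}(P_\C(\lambda))$; this last ring is local because $P_\C(\lambda)$ is the projective cover of $L_\C(\lambda)$ in $\mathcal O$. The only slightly delicate point — and the one I would regard as the main obstacle — is precisely this passage from projectivity within the block to projectivity in $\mathcal O_q$; the adjunction argument above handles it uniformly.
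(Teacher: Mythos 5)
Your proof is correct and follows essentially the same route as the paper, which presents the proposition as an immediate consequence of Theorem~\ref{thm9}. The steps you spell out — identifying $P_\C(\lambda)^{[l]}\otimes St_l$ with $\mathrm{G}\,P_\C(\lambda)$, using Theorem~\ref{prop3} to recognise $L_q(l\lambda+(l-1)\rho)$ as $\mathrm{G}\,L_\C(\lambda)$, and then transporting the projective cover along the exact equivalence — are exactly the intended unpacking, and the adjunction $\Hom_{\mathcal O_q}(\mathrm{G}\,P_\C(\lambda),M)\simeq\Hom_{\mathcal O}(P_\C(\lambda),\mathrm{F}\,M)$ is just equation~\eqref{eq3} specialised to $M=St_l$, giving a clean way to get projectivity in all of $\mathcal O_q$ (your parenthetical alternative via Proposition~\ref{prop8} is the block decomposition the paper relies on implicitly).
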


Having these projectives allows us to deduce the following:

\begin{thm}\label{Thm12}
The category $\mathcal O_q$ has enough projectives.
\end{thm}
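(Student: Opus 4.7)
The plan is to reduce to Proposition~\ref{prop11} by tensoring with finite-dimensional modules. First I would establish the standard fact that if $P \in \mathcal O_q$ is projective and $V$ is a finite-dimensional $U_q$-module of type $\mathbf 1$, then $P \otimes V$ is projective in $\mathcal O_q$. This comes from the adjunction
\[
\Hom_{\mathcal O_q}(P \otimes V, -) \;\simeq\; \Hom_{\mathcal O_q}(P, V^* \otimes -),
\]
combined with the exactness of $V^* \otimes -$ (valid since $V^*$ is again finite-dimensional of type $\mathbf 1$, and $\mathcal O_q$ is stable under tensoring with such modules) and of $\Hom_{\mathcal O_q}(P, -)$ by projectivity of $P$.

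Next, fix $\lambda \in X$ and write $\lambda = l\lambda^1 + \lambda^0$ with $\lambda^0 \in X_l$. Since $\lambda^0 \in X_l \subset X^+$, the simple module $L_q(\lambda^0)$ is finite-dimensional of type $\mathbf 1$, and hence so is $V := St_l^* \otimes L_q(\lambda^0)$. By Proposition~\ref{prop11} the module $P_\C(\lambda^1)^{[l]} \otimes St_l$ is projective, so the first step shows that
\[
Q \;:=\; P_\C(\lambda^1)^{[l]} \otimes St_l \otimes V \;=\; P_\C(\lambda^1)^{[l]} \otimes St_l \otimes St_l^* \otimes L_q(\lambda^0)
\]
is projective in $\mathcal O_q$. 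The evaluation map $St_l \otimes St_l^* \twoheadrightarrow \C$, tensored with the identities on the flanking factors and composed with the projective-cover surjection $P_\C(\lambda^1) \twoheadrightarrow L_\C(\lambda^1)$ in $\mathcal{O}_{\mathrm{int}}$, then yields
\[
Q \;\twoheadrightarrow\; L_\C(\lambda^1)^{[l]} \otimes L_q(\lambda^0) \;\simeq\; L_q(\lambda),
\]
with the last isomorphism being Theorem~\ref{prop3}. This produces a projective $Q_\lambda := Q$ surjecting onto each simple $L_q(\lambda)$.

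Finally, to pass from simples to arbitrary modules, let $M \in \mathcal O_q$. By Corollary~\ref{prop6}, $M$ has finite length, so $M/\operatorname{rad}(M) = \bigoplus_j L_q(\mu_j)$ is a finite semisimple direct sum. Setting $Q' := \bigoplus_j Q_{\mu_j}$ (projective by construction), the canonical surjection $Q' \twoheadrightarrow M/\operatorname{rad}(M)$ lifts through $M$ by projectivity of $Q'$; Nakayama's lemma, applicable because $M$ has finite length, then upgrades the resulting map to a surjection $Q' \twoheadrightarrow M$. The only real subtlety lies in the first step — carefully verifying the tensor-Hom adjunction within $\mathcal O_q$ (including that $V^*$ is again a type-$\mathbf 1$ module on which the adjunction produces objects of $\mathcal O_q$) — while the rest of the argument is a direct synthesis of Proposition~\ref{prop11}, Theorem~\ref{prop3}, and the finite-length property.
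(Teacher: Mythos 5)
Your proof is correct and follows the same high-level strategy as the paper — reduce to simple modules via finite length, then cover each $L_q(\lambda)$ by tensoring the known projective $P_\C(\lambda^1)^{[l]} \otimes St_l$ of Proposition~\ref{prop11} with a suitable finite-dimensional module — but the two arguments diverge in the choice of auxiliary module and in how the surjection onto $L_q(\lambda)$ is produced. The paper tensors with the single simple module $L_q(\nu)$, $\nu = w_0\lambda^0 + (l-1)\rho$, and builds the surjection through Verma modules: because $w_0\nu = \lambda^0 - (l-1)\rho$ is the lowest weight of $L_q(\nu)$, one gets $\Delta_q(l\lambda^1 + (l-1)\rho)\otimes L_q(\nu)\twoheadrightarrow \Delta_q(\lambda)$, and then $P_\C(\lambda^1)^{[l]}\otimes St_l \twoheadrightarrow \Delta_q(l\lambda^1 + (l-1)\rho)$ does the rest. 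You instead tensor with the larger module $St_l^* \otimes L_q(\lambda^0)$ and use an evaluation map to cancel the two Steinberg factors. Both routes work; the paper's is more economical, yours is perhaps more conceptually transparent.

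One small technical slip you should repair: with the paper's duality convention $(uf)(m) = f(S(u)m)$ from Section~\ref{s.3.8}, the evaluation $V^*\otimes V \to \C$, $f\otimes v\mapsto f(v)$, is a $U_q$-homomorphism, but $V\otimes V^*\to\C$ in the order you wrote is not (one would need $\sum S(u_{(2)})u_{(1)}=\epsilon(u)$, which fails since $S^2\neq\mathrm{id}$). You can fix this either by putting the dual on the other side, i.e.\ tensoring $P_\C(\lambda^1)^{[l]}\otimes St_l$ on the \emph{left} and using $St_l^*\otimes St_l\to\C$, or by invoking the right (co)dual ${}^{*}St_l$, or simply by noting that $St_l^*\cong St_l$ (its highest weight is $-w_0\cdot(-(l-1)\rho)=(l-1)\rho$) and composing through that isomorphism and the correctly ordered evaluation. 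This is a convention issue, not a gap in the idea. The remaining ingredients — the Hopf-algebraic tensor–Hom adjunction making $P\otimes V$ projective, and the finite-length/Nakayama reduction from arbitrary $M$ to simples — are fine and are also used (implicitly or explicitly) in the paper.
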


\begin{proof}
This is a standard argument, cf. \cite[3.8]{Hu}: By induction with respect to length we reduce the problem to proving that each simple module 
can be covered by a projective. Given $\lambda \in X$, we set $\nu = w_0\lambda^0 +(l-1)\rho$ where $w_0$ denotes the longest element in the 
Weyl group $W$ for $\mathfrak g$. Then $w_0 \nu = \lambda^0 -(l-1)\rho$ is the lowest weight of the finite dimensional simple module $L_q(\nu)$. 
Therefore $\Delta_q(l\lambda^1 +(l-1)\rho) \otimes L_q(\nu)$ surjects onto $\Delta_q(l\lambda^1 +(l-1)\rho +w_0\nu) = \Delta_q(\lambda)$. 
Now it is an easy consequence of Proposition~\ref{prop11} that $P_\C(\lambda^1)^{[l]} \otimes St_l$ surjects onto $\Delta_q((l\lambda^1 +(l-1)\rho)$.
So we see that the projective module $P_\C(\lambda^1)^{[l]} \otimes St_l \otimes L_q(\nu)$ surjects onto $L_q(\lambda)$. 
\end{proof}

Define $P_q(\lambda) \in \mathcal O_q$ as the projective cover of $L_q(\lambda)$. Then Corollary~\ref{cor10} says that $P_q(\lambda) = \Delta_q(\lambda)$ for all $\lambda$ such
that $\lambda +\rho \in X^+ \cap lX$. Moreover, Proposition~\ref{prop11} says that $P_q(l\lambda + (l-1)\rho) \simeq P_\C(\lambda)^{[l]} \otimes St_l$ for all $\lambda
\in X$. We shall now generalize this by showing that all indecomposable projectives in $\mathcal O_q$ have a tensor factorization.

Recall that the subcategory $\mathcal F_q$ consisting of all finite dimensional modules in $\mathcal O_q$ also has enough projectives, see \cite[Section 4]{APW92}. Let us 
denote by $Q_q(\mu) \in \mathcal F_q$ the projective cover of $L_q(\mu)$ for $\mu \in X^+$. 

\begin{thm}\label{Thm13}
For any $\lambda \in X$ we have $P_q(\lambda) \simeq P_\C(\lambda^1)^{[l]} \otimes Q_q(\lambda^0)$.
\end{thm}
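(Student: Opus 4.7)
Set $X := P_\C(\lambda^1)^{[l]} \otimes Q_q(\lambda^0)$. The plan is to verify that $X$ satisfies the three defining properties of $P_q(\lambda)$: projectivity in $\mathcal O_q$, existence of a surjection onto $L_q(\lambda)$, and simplicity of the head (which for a projective forces indecomposability). For projectivity, I will first observe that $Q_q(\lambda^0)$ is a direct summand of $St_l \otimes V$ for a suitable $V \in \mathcal F_q$: choose any $V$ so that $St_l \otimes V$ surjects onto $L_q(\lambda^0)$; since $St_l$ is projective in $\mathcal F_q$ and tensoring with the finite-dimensional $V$ preserves projectivity (the functor $-\otimes V$ has the exact right adjoint $-\otimes V^*$ via the antipode of $U_q$), $St_l \otimes V$ is projective in $\mathcal F_q$ and $Q_q(\lambda^0)$ sits inside as a summand. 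Hence $X$ is a summand of
\begin{displaymath}
P_\C(\lambda^1)^{[l]} \otimes St_l \otimes V \;\simeq\; P_q(l\lambda^1+(l-1)\rho) \otimes V
\end{displaymath}
by Proposition~\ref{prop11}, and the right-hand side is projective in $\mathcal O_q$ by the same tensor-with-finite-dimensional argument.

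For the surjection, tensoring the canonical quotients $P_\C(\lambda^1) \twoheadrightarrow L_\C(\lambda^1)$ (in $\mathcal O$) and $Q_q(\lambda^0) \twoheadrightarrow L_q(\lambda^0)$ (in $\mathcal F_q$) yields
\begin{displaymath}
X \;\twoheadrightarrow\; L_\C(\lambda^1)^{[l]} \otimes L_q(\lambda^0) \;\simeq\; L_q(\lambda),
\end{displaymath}
where the last isomorphism is Theorem~\ref{prop3}. Hence $P_q(\lambda)$ is already a direct summand of $X$.

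To establish the simplicity of the head, I will compute $\Hom_{\mathcal O_q}(X, L_q(\mu))$ for each $\mu \in X$. Applying the adjunction~\eqref{eq3} with $Q = P_\C(\lambda^1)$ and $M = Q_q(\lambda^0)$, then Theorem~\ref{prop3} to decompose $L_q(\mu) \simeq L_\C(\mu^1)^{[l]} \otimes L_q(\mu^0)$, and finally the compatibility of $\Hom_{u_q}(-,-)$ with tensoring by a factor on which $u_q$ acts trivially, yields
\begin{displaymath}
\Hom_{\mathcal O_q}(X, L_q(\mu)) \;\simeq\; \Hom_{\mathcal O}\!\bigl(P_\C(\lambda^1),\; L_\C(\mu^1) \otimes \Hom_{u_q}(Q_q(\lambda^0), L_q(\mu^0))^{[-l]}\bigr).
\end{displaymath}
The key identity I need is
\begin{displaymath}
\Hom_{u_q}\bigl(Q_q(\lambda^0),\,L_q(\mu^0)\bigr) \;=\; \delta_{\lambda^0,\mu^0}\,\C.
\end{displaymath}
Once this is known, the inner factor collapses to $\delta_{\lambda^0,\mu^0}\,L_\C(\mu^1)$, and since $P_\C(\lambda^1)$ is the projective cover of $L_\C(\lambda^1)$ in $\mathcal O$, the whole expression equals $\delta_{\lambda,\mu}\,\C$. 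Thus $X$ has simple head $L_q(\lambda)$ (with multiplicity one), so the projective $X$ is indecomposable and equal to $P_q(\lambda)$.

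The main obstacle is the key identity above, which amounts to saying that $Q_q(\lambda^0)$, restricted to $u_q$, has head $\hat L_q(\lambda^0)$ with multiplicity one. The natural route is to identify $Q_q(\lambda^0)|_{u_qB_q}$ with the $u_qB_q$-projective cover $\hat Q_q(\lambda^0)$ of $\hat L_q(\lambda^0)$; this cover is automatically indecomposable with simple head as a $u_qB_q$- and hence $u_q$-module. For $\lambda^0 \in X_l$, the identification should follow by combining the tensor decomposition~\eqref{eq2} of simple infinitesimal modules with the compatibility between projective covers in $\mathcal F_q$ and in the category of $u_qB_q$-modules recorded in Remark~\ref{rem2}, together with the projectivity of $St_l$ as a $u_q$-module.
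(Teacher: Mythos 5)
Your overall strategy is sound and genuinely different in its organization from the paper's. The paper handles projectivity and simplicity of the head in a single stroke by computing $\Ext^i_{\mathcal O_q}(P_\C(\lambda^1)^{[l]} \otimes Q_q(\lambda^0), L_q(\mu))$ for \emph{all} $i$ (not just $i=0$) and all $\mu$, finding that the answer is $\C$ precisely when $(\mu,i)=(\lambda,0)$ and zero otherwise; this simultaneously gives projectivity (vanishing of positive $\Ext$ against simples) and a one-dimensional head. You instead establish projectivity separately, by observing that $Q_q(\lambda^0)$ is a summand of $St_l\otimes V$ for a suitable finite dimensional $V$ and hence $X$ is a summand of $P_q(l\lambda^1+(l-1)\rho)\otimes V$ via Proposition~\ref{prop11}, and then you only need the $\Hom$ computation. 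This is a perfectly legitimate alternative and in fact mirrors the way the paper itself proved Theorem~\ref{Thm12} (enough projectives). What it buys you is that the adjunction~\eqref{eq3} is used only at the level of $\Hom$; what it costs is that you have to run the ``summand of a tensor product'' argument, which the paper is able to skip.

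Both routes hinge on the same key input: $Q_q(\lambda^0)$ restricted to $u_q$ is the projective cover of $L_q(\lambda^0)$, so $\Hom_{u_q}(Q_q(\lambda^0), L_q(\mu)) = \delta_{\mu,\lambda^0}\,\C$ for $\mu\in X_l$. The paper simply invokes \cite[Theorem 4.6]{APW92} for this. You instead sketch re-deriving it from Remark~\ref{rem2}, \eqref{eq2}, and the projectivity of $St_l$ over $u_q$. That sketch is the weak point of your writeup: Remark~\ref{rem2} is phrased in terms of \emph{multiplicities} (composition factors of $\tilde\Delta_q(\lambda)$ and baby Verma filtration multiplicities), not in terms of identifying $Q_q(\lambda^0)|_{u_qB_q}$ with the $u_qB_q$-projective cover, and you give no actual argument that the restriction functor $\mathcal F_q\to u_qB_q\text{-mod}$ takes the indecomposable projective $Q_q(\lambda^0)$ to an indecomposable projective with the right head. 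You do not need to prove this yourself --- it is precisely the content of the cited result in \cite{APW92}; I would simply cite it, as the paper does, rather than leave a half-argued sketch in place of a proof.
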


\begin{proof}
By \cite[Theorem 4.6]{APW92} the restriction to $u_q$ of $Q_q(\lambda^0)$ is the projective cover of $L_q(\lambda^0)$, i.e. for $\mu \in X_l$ we have 
\begin{displaymath}
\Hom_{u_q}(Q(\lambda^0), L_q(\mu)) = \begin{cases} \C &\text { if } \mu = \lambda^0;\\ 0, &\text { otherwise.}\end{cases}
\end{displaymath}
Hence, using \ref{eq3} and Theorem~\ref{prop3}, for any $\mu \in X$ we get
\begin{multline*}
\Ext^i_{\mathcal O_q}(P(\lambda^1)^{[l]} \otimes Q(\lambda^0), L_q(\mu)) = \\ \Ext_{\mathcal O_\C}^i(P_\C(\lambda^1), L_q(\mu^1) \otimes \Hom_{u_q}(Q(\lambda^0),
L_q(\mu^0))^{[-l]})
= \\ \begin{cases} \C &\text { if } \mu = \lambda \text  { and } i = 0;\\ 0, &\text { otherwise.}\end{cases}
\end{multline*} 

\end{proof}

Let us also record the following important consequence of the constructions in the proof of Theorem~\ref{Thm12}

\begin{cor}\label{cor13a}
Projective modules in $\mathcal O_q$ all possess Verma filtrations.
\end{cor}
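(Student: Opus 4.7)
The plan is to reduce to indecomposable projectives (using the enough-projectives of Theorem~\ref{Thm12} and finite length of Corollary~\ref{prop6}, so every projective is a finite direct sum of $P_q(\lambda)$'s) and then to construct an explicit Verma flag of each $P_q(\lambda)$ by combining the tensor factorization of Theorem~\ref{Thm13} with the standard Verma flag of $P_\C(\lambda^1)$ and a baby Verma flag over $u_qB_q$.

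Concretely, I would start from a standard (Verma) filtration of $P_\C(\lambda^1) \in \mathcal O_{\mathrm{int}}$, which exists by classical BGG theory. Applying the exact Frobenius twist and then tensoring with $Q_q(\lambda^0)$ (using Theorem~\ref{Thm13}) produces a filtration of $P_q(\lambda) \simeq P_\C(\lambda^1)^{[l]} \otimes Q_q(\lambda^0)$ whose subquotients are of the form $\Delta_\C(\mu)^{[l]} \otimes Q_q(\lambda^0)$. The heart of the proof is then to show each such subquotient carries a Verma flag. Via Lemma~\ref{lemma3b} one rewrites $\Delta_\C(\mu)^{[l]} \simeq U_q \otimes_{u_qB_q} \C_{l\mu}$, and the Hopf-algebra tensor identity yields
\begin{displaymath}
\Delta_\C(\mu)^{[l]} \otimes Q_q(\lambda^0) \simeq U_q \otimes_{u_qB_q}\bigl(\C_{l\mu} \otimes Q_q(\lambda^0)|_{u_qB_q}\bigr).
\end{displaymath}
Since $Q_q(\lambda^0)$ is projective in $\mathcal F_q$, its restriction to $u_qB_q$ should carry a baby Verma flag with successive quotients $\tilde\Delta_q(\eta)$ (the $u_qB_q$-analog underlying Remark~\ref{rem2}, essentially the content of \cite[Section~4]{APW92}). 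Because $u_q$ acts through the counit on $\C_{l\mu}$, tensoring by $\C_{l\mu}$ shifts weights by $l\mu$ and sends $\tilde\Delta_q(\eta)$ to $\tilde\Delta_q(l\mu + \eta)$; the exact functor $U_q \otimes_{u_qB_q} -$ then carries $\tilde\Delta_q(l\mu+\eta) = u_qB_q \otimes_{B_q} \C_{l\mu+\eta}$ to $\Delta_q(l\mu+\eta)$. Splicing the two filtrations yields a Verma flag of $P_q(\lambda)$ with quotients $\Delta_q(l\mu + \eta)$.

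I expect the most delicate point to be the existence of a $\tilde\Delta_q$-flag on $Q_q(\lambda^0)|_{u_qB_q}$: one must upgrade the projectivity of $Q_q(\lambda^0)$ in $\mathcal F_q$ to the appropriate projectivity over $u_qB_q$ and then invoke the $u_qB_q$-analog of the ``projective has baby Verma flag'' theorem. An alternative route sidestepping this, closer to the actual construction in the proof of Theorem~\ref{Thm12}, would realize $P_q(\lambda)$ as a direct summand of the explicitly Verma-flagged module $P_\C(\lambda^1)^{[l]} \otimes St_l \otimes L_q(\nu)$ (Verma-flagged via the Verma flag of $P_\C(\lambda^1)$, Proposition~\ref{prop4}, and a $B_q$-weight-space filtration of $L_q(\nu)$), trading the baby Verma issue for the standard $\Ext^1$-against-dual-Vermas criterion that ensures direct summands of Verma-flagged modules are again Verma-flagged.
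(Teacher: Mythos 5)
Your proposal is correct, and your main line of argument is genuinely different from the paper's. The paper reduces to the special block: it observes that $P_q(\lambda)$ is a direct summand of $P_\C(\lambda^1)^{[l]}\otimes St_l\otimes L_q(\nu)$, a Verma-flagged module (Verma flag on $P_\C(\lambda^1)$ via Theorem~\ref{thm9}, then tensor with the finite dimensional $L_q(\nu)$), and then — tacitly — invokes the fact that direct summands of $\Delta_q$-flagged modules are $\Delta_q$-flagged. Your ``alternative route'' is essentially this. Your primary argument instead builds the filtration intrinsically from the factorization $P_q(\lambda)\cong P_\C(\lambda^1)^{[l]}\otimes Q_q(\lambda^0)$ of Theorem~\ref{Thm13}, splicing the classical Verma flag of $P_\C(\lambda^1)$ with a baby Verma flag of $Q_q(\lambda^0)|_{u_qB_q}$ via Lemma~\ref{lemma3b}, the Hopf tensor identity, and transitivity of induction; every step you list is sound. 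The two approaches trade different external inputs: yours requires the Humphreys--Verma--type fact that the restriction of $Q_q(\lambda^0)$ to $u_qB_q$ admits a $\tilde\Delta_q$-flag — this is indeed established in \cite[Section~4]{APW92} and is implicitly used in Remark~\ref{rem2}, so you would simply need to cite it; the paper's route instead needs the ``direct summands of Verma-flagged modules are Verma-flagged'' criterion, which rests on the $\Ext$-vanishing of Theorem~\ref{Thm16} — note that in the paper this vanishing is proved \emph{after} and \emph{using} Corollary~\ref{cor13a}, so strictly speaking the paper's argument needs an independent proof of that vanishing (which is available, e.g.\ by an induction on highest weights as in \cite[6.12]{Hu}). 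An added benefit of your primary approach is that it directly exhibits the Verma multiplicities $(P_q(\lambda):\Delta_q(l\mu+\eta)) = (P_\C(\lambda^1):\Delta_\C(\mu))\cdot(Q_q(\lambda^0):\tilde\Delta_q(\eta))$, parallel to the tilting formula of Theorem~\ref{thm22a}.
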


\begin{proof}
Let $\lambda \in X$. We shall prove that the corollary holds for $P_q(\lambda)$. When $\lambda \in lX +(l-1)\rho$ this follows from the fact that the
corresponding statement is true in $\mathcal O$ combined with Theorem~\ref{thm9}. But then the result follows in general because the construction in 
the proof of Theorem~\ref{Thm12} reveals that $P_q(\lambda)$ may be obtained as a summand of a projective in $\mathcal O_q^{\mathrm{spec}}$ tensored by a finite dimensional 
module.
\end{proof}

\subsection{Injective modules in $\mathcal O_q$}\label{s.3.8}

Let $M$ be an arbitrary $U_q$-module. Since the antipode $S$ on $U_q$ is an antihomomorphism, the dual space $M^* = \Hom_\C(M, \C)$ has the  natural structure of a $U_q$-module
given by $uf(m) = f(S(u)m)$, $u\in U_q$, $f \in M^*$, $m \in M$. Now $U_v$ has an automorphism $\omega$ which interchanges $E_i$ and $F_i$ and inverts $K_i$ , see \cite[4.6]{Ja}.
Clearly, $\omega$ gives rise to an automorphism of $U_q$. Twisting $M^*$ by $\omega$ we get the $U_q$-module $^\omega M^*$ and when $M \in \mathcal O_q$ we set
\begin{displaymath}
M^{\star} = \oplus_{\lambda \in X} (^\omega M^*)_\lambda.
\end{displaymath}  
  
Then $({}_-)^{\star}$ is an endofunctor on $\mathcal O_q$, called {\em duality}, with the property that for each $\lambda \in X$ we have $\dim (M^{\star})_\lambda = \dim M_\lambda$. Hence $L^{\star}_q(\lambda) \simeq
L_q(\lambda)$ (i.e. $\star$ is {\em simple preserving}). The existence of $\star$ gives immediately:

\begin{thm}\label{Thm14}
 $\mathcal O_q$ has enough injectives.
\end{thm}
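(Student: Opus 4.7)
The plan is to exploit the duality functor $\star$ just constructed together with Theorem~\ref{Thm12}, using the standard principle that an exact contravariant involution on an abelian category swaps projective and injective objects. First, I would confirm that $\star$ is an honest endofunctor of $\mathcal{O}_q$: for $M \in \mathcal{O}_q$ all weight spaces $M_\lambda$ are finite dimensional (since $M$ has finite length by Corollary~\ref{prop6} and each composition factor $L_q(\mu)$ has finite dimensional weight spaces by the tensor decomposition in Theorem~\ref{prop3}), so $(M^\star)_\lambda = M_\lambda^*$ canonically and consequently $M^{\star\star} \simeq M$ naturally in $M$. Checking Conditions (I)–(III) for $M^\star$ is routine: finite length passes to $M^\star$, weight decomposition is built in, and the local finiteness of $U_q^+$ follows from that of $U_q^-$ on $M$ after twisting by $\omega$.

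Next, I would verify the natural adjunction-style isomorphism
\begin{displaymath}
\Hom_{\mathcal{O}_q}(M, N^\star) \simeq \Hom_{\mathcal{O}_q}(N, M^\star),
\end{displaymath}
which comes directly from the fact that $\star$ is a contravariant equivalence (apply $\star$ to a homomorphism and use $M \simeq M^{\star\star}$). Combined with exactness of $\star$ (dualization of finite dimensional vector spaces is exact on each weight space), this shows that for any projective $P \in \mathcal{O}_q$ the functor $\Hom_{\mathcal{O}_q}(-, P^\star)$ is exact: given a short exact sequence $0 \to A \to B \to C \to 0$, apply $\star$ to obtain $0 \to C^\star \to B^\star \to A^\star \to 0$, then use exactness of $\Hom_{\mathcal{O}_q}(P, -)$ and transport via the displayed isomorphism. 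Hence $P^\star$ is injective in $\mathcal{O}_q$.

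Finally, given $M \in \mathcal{O}_q$, I would use Theorem~\ref{Thm12} to pick a surjection $P \twoheadrightarrow M^\star$ with $P$ projective, and apply $\star$ to obtain an embedding $M \simeq M^{\star\star} \hookrightarrow P^\star$ into an injective object. Applied to $M = L_q(\lambda)$, and using that $\star$ preserves simple objects, this produces injective envelopes $I_q(\lambda) := P_q(\lambda)^\star$. There is no substantive obstacle here: the only nontrivial point is the naturality/involutivity of $\star$, and even that is reduced to the well-established properties of the Chevalley-type anti-involution $\omega$ on $U_v$; the rest of the argument is purely formal.
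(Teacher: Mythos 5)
Your proof is correct and fills in precisely the standard argument the paper has in mind when it writes that the existence of $\star$ together with Theorem~\ref{Thm12} ``gives immediately'' that $\mathcal{O}_q$ has enough injectives. The approach is the same: $\star$ is an exact contravariant involution that swaps projectives and injectives, so applying it to a projective cover of $M^\star$ yields an injective hull of $M$.
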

 
We set $I_q(\lambda) = P^{\star}_q(\lambda)$. This is the injective envelope of $L_q(\lambda)$ in $\mathcal O_q$ and if we denote by $I_\C(\mu)$ 
the injective envelope of $L_\C(\mu)$ in $\mathcal O_{int}$ then Theorem~\ref{Thm13} implies:
  
\begin{thm}\label{Thm15}
For any $\lambda \in X$ we have $I_q(\lambda) \simeq I_\C(\lambda^1)^{[l]} \otimes Q_q(\lambda^0)$.
\end{thm}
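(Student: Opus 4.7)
My plan is to apply the duality functor $\star$ to the decomposition established in Theorem~\ref{Thm13}. Since $I_q(\lambda) = P_q(\lambda)^\star$ by definition, the theorem will follow once we prove
\[
\bigl(P_\C(\lambda^1)^{[l]} \otimes Q_q(\lambda^0)\bigr)^\star \;\simeq\; I_\C(\lambda^1)^{[l]} \otimes Q_q(\lambda^0).
\]

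I would establish this via three compatibilities of $\star$ together with one substantive identification. The compatibilities are: (i) $\star$ is monoidal, $(M \otimes N)^\star \simeq M^\star \otimes N^\star$, at least when one of the factors is finite dimensional; (ii) $\star$ commutes with the Frobenius twist, $(N^{[l]})^\star \simeq (N^\vee)^{[l]}$ for $N \in \mathcal O$, where $\vee$ denotes the classical BGG duality on $\mathcal O$; and (iii) the classical BGG identity $P_\C(\mu)^\vee \simeq I_\C(\mu)$. Items (i) and (ii) are formal consequences of the definition of $\star$: the antipode $S$ and the involution $\omega$ on $U_v$ are compatible with the coproduct, the weight-space restriction in the construction of $M^\star$ respects tensor decompositions, and $Fr_q$ is a Hopf algebra homomorphism that intertwines $\omega$ on $U_q$ with the Cartan involution on $U_\C$. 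Item (iii) is classical.

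The substantive step is the self-duality $Q_q(\mu)^\star \simeq Q_q(\mu)$ in $\mathcal F_q$ for $\mu \in X^+$. To prove this I would argue that $Q_q(\mu)^\star$ is again an indecomposable projective in $\mathcal F_q$ with top $L_q(\mu)$. By \cite[Theorem~4.6]{APW92}, the restriction of $Q_q(\mu)$ to $u_q$ is the $u_q$-projective cover of $L_q(\mu^0)$; since $u_q$ is a finite-dimensional Hopf algebra and hence Frobenius, this restriction is simultaneously the $u_q$-injective envelope of $L_q(\mu^0)$ and in particular has simple $u_q$-socle $L_q(\mu^0)$. The $U_q$-socle of $Q_q(\mu)$ is then a $U_q$-submodule contained in this simple $u_q$-submodule, and a weight analysis combined with \eqref{eq2} identifies it as $L_q(\mu)$. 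Dualising, $Q_q(\mu)^\star$ has top $L_q(\mu)$, is indecomposable, and (again via the Frobenius property of $u_q$ and APW's lifting result) is $u_q$-projective, hence $U_q$-projective in $\mathcal F_q$; the uniqueness of projective covers gives $Q_q(\mu)^\star \simeq Q_q(\mu)$.

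Assembling the pieces,
\[
I_q(\lambda) = P_q(\lambda)^\star \simeq \bigl(P_\C(\lambda^1)^{[l]}\bigr)^\star \otimes Q_q(\lambda^0)^\star \simeq I_\C(\lambda^1)^{[l]} \otimes Q_q(\lambda^0),
\]
which is the claim. The main obstacle is the self-duality of $Q_q(\mu)$ in $\mathcal F_q$: it relies on the Frobenius structure of $u_q$, the APW lifting \cite[Theorem~4.6]{APW92}, and a careful identification of tops and socles at the $U_q$-level from their $u_q$-counterparts. The other compatibilities reduce to formal manipulation with the Hopf-algebra structure and the definition of $\star$.
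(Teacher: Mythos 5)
Your approach coincides with the paper's: the paper obtains Theorem~\ref{Thm15} in one sentence by applying the duality $\star$ to Theorem~\ref{Thm13}, precisely because $I_q(\lambda)=P_q(\lambda)^\star$, $\star$ commutes with tensoring by finite dimensional modules, $\star$ interchanges the twisted $P_\C$ with the twisted $I_\C$, and $Q_q(\lambda^0)$ is self-dual. You have simply written out the implicit compatibilities. Your identification of the self-duality of $Q_q(\lambda^0)$ as the substantive ingredient is correct: the paper does not prove it but does invoke it later (``Note that $Q_q(\lambda^0)$ is self-dual'' in the proof of Theorem~\ref{Thm3.9.1}); it is a known consequence of the APW theory of injective/projective modules in $\mathcal F_q$. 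Two small points of care in your sketch. First, \cite[Theorem~4.6]{APW92} as invoked in the paper concerns $\mu\in X_l$, and that is all you need here since the decomposition uses $Q_q(\lambda^0)$ with $\lambda^0\in X_l$; writing $Q_q(\mu)$ for general $\mu\in X^+$ and then restricting to $u_q$ and landing on $L_q(\mu^0)$ is not what that theorem says. Second, the sentence ``the $U_q$-socle of $Q_q(\mu)$ is a $U_q$-submodule contained in this simple $u_q$-submodule'' is not literally correct: a simple $U_q$-submodule need not lie inside the $u_q$-socle; rather, its own nonzero $u_q$-socle must, and since $\mathrm{soc}_{u_q}Q_q(\mu)$ is simple this forces the $U_q$-socle to be simple. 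With these adjustments your argument is essentially the one the paper is implicitly relying on.
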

  
\subsection{Projective-injective modules in $\mathcal O_q$}\label{s.3.9}

By a projective-injective module we understand a module which is both projective and injective. We have

\begin{thm}\label{Thm3.9.1}
Let $\lambda\in X$. Then the following assertions are equivalent:
\begin{enumerate}[$($a$)$]
\item\label{Thm3.9.1-1} $P_q(\lambda) \simeq I_q(\lambda)$.
\item\label{Thm3.9.1-2} $L_q(\lambda)$ occurs in the socle of a projective-injective module in $\mathcal{O}_q$.
\item\label{Thm3.9.1-3} $L_q(\lambda)$ occurs in the top of a projective-injective module in $\mathcal{O}_q$.
\item\label{Thm3.9.1-4} $L_q(\lambda)$ occurs in the socle of some $\Delta_q(\mu)$, $\mu\in X$.
\item\label{Thm3.9.1-5} $\lambda$ is antidominant.
\end{enumerate}
\end{thm}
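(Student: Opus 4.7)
The plan is to establish the cyclic chain $(a) \Rightarrow (b) \Rightarrow (d) \Rightarrow (e) \Rightarrow (a)$, together with $(b) \Leftrightarrow (c)$ coming from the duality functor ${}^{\star}$. Since $I_q(\mu) := P_q(\mu)^{\star}$, the duality interchanges projectives with injectives, so it preserves the class of projective-injective modules and swaps ``top'' with ``socle''; this yields $(b) \Leftrightarrow (c)$ at once. The implications $(a) \Rightarrow (b)$ and $(a) \Rightarrow (c)$ are immediate, since $P_q(\lambda) \simeq I_q(\lambda)$ is then projective-injective and has $L_q(\lambda)$ as both its simple top and its simple socle.

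For $(b) \Rightarrow (d)$, I would take a projective-injective $M$ with $L_q(\lambda) \hookrightarrow M$ and fix a Verma filtration $0 = M_0 \subset M_1 \subset \cdots \subset M_n = M$ with $M_i/M_{i-1} \simeq \Delta_q(\mu_i)$, which exists by Corollary~\ref{cor13a}. Taking $i$ minimal with $L_q(\lambda) \subset M_i$, simplicity forces $L_q(\lambda) \cap M_{i-1} = 0$, so the composite $L_q(\lambda) \hookrightarrow M_i \twoheadrightarrow \Delta_q(\mu_i)$ is injective. For $(e) \Rightarrow (a)$ I would first observe that $\lambda$ is antidominant iff $\lambda^1$ is antidominant: the identity $\langle \lambda + \rho, \alpha_i^\vee \rangle = (\lambda_i^0 + 1) + l\lambda_i^1$ together with $\lambda_i^0 \in [0, l-1]$ shows both antidominance conditions reduce to $\lambda_i^1 \leq -1$ for every simple root $\alpha_i$. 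Combining this with the classical fact \cite[Theorem 4.10]{Hu} that $P_\C(\mu) \simeq I_\C(\mu)$ iff $\mu$ is antidominant in $\mathcal{O}_{\mathrm{int}}$, and applying Theorems~\ref{Thm13} and~\ref{Thm15}, one gets $P_q(\lambda) \simeq P_\C(\lambda^1)^{[l]} \otimes Q_q(\lambda^0) \simeq I_\C(\lambda^1)^{[l]} \otimes Q_q(\lambda^0) \simeq I_q(\lambda)$.

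The main obstacle is $(d) \Rightarrow (e)$. My strategy is to apply the exact functor $\mathrm{F}$ of Section~\ref{s3.6}. The key computation is the natural isomorphism $\mathrm{F}(X^{[l]} \otimes Y) \simeq X \otimes \mathrm{F}(Y)$ for $X \in \mathcal{O}_{\mathrm{int}}$ and $Y \in \mathcal{O}_q$, which combined with Proposition~\ref{prop3c} shows that $\mathrm{F}\Delta_q(\mu)$ admits a Verma filtration in $\mathcal{O}_{\mathrm{int}}$ whose nonzero subquotients are precisely the classical Vermas $\Delta_\C(\mu'^1)$ coming from filtration pieces of $\Delta_q(\mu)$ with $\mu'^0 = (l-1)\rho$. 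In the special-block case $\lambda^0 = (l-1)\rho$, Proposition~\ref{prop7} gives $\mathrm{F}L_q(\lambda) = L_\C(\lambda^1)$, so applying $\mathrm{F}$ to the embedding $L_q(\lambda) \hookrightarrow \Delta_q(\mu)$ yields $L_\C(\lambda^1) \hookrightarrow \mathrm{F}\Delta_q(\mu)$; the minimal-index argument and the classical BGG theorem on socles of Vermas in $\mathcal{O}_{\mathrm{int}}$ then force $\lambda^1$ antidominant, hence $\lambda$ antidominant by the observation above. For the general case $\lambda^0 \neq (l-1)\rho$, I would reduce to the preceding case by tensoring the given embedding with the finite-dimensional simple $L_q((l-1)\rho - \lambda^0) \in \mathcal{F}_q$ (well-defined since $(l-1)\rho - \lambda^0 \in X_l \cap X^+$) and extracting a simple summand of the socle of $L_q(\lambda) \otimes L_q((l-1)\rho - \lambda^0)$ whose highest weight lies in $lX + (l-1)\rho$; since $\Delta_q(\mu) \otimes L_q((l-1)\rho - \lambda^0)$ inherits a Verma filtration (Vermas are preserved under tensoring with a finite-dimensional module), one obtains an embedding of a special-block simple into a Verma, reducing to the case already settled. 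The most delicate technical point is verifying the existence of such a special-block socle summand of the tensor product.
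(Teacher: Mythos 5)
Your overall cycle $(a)\Rightarrow(b)\Rightarrow(d)\Rightarrow(e)\Rightarrow(a)$ together with $(b)\Leftrightarrow(c)$ is logically sound, and the implications $(a)\Rightarrow(b)$, $(b)\Leftrightarrow(c)$, $(b)\Rightarrow(d)$ and $(e)\Rightarrow(a)$ match the paper's reasoning in substance: the same use of the duality $\star$, the same minimal-piece argument in a Verma filtration (Corollary~\ref{cor13a}), and the same use of Theorems~\ref{Thm13}, \ref{Thm15} together with the self-duality of $Q_q(\lambda^0)$ and the elementary observation that $\lambda$ is antidominant iff $\lambda^1$ is.

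The genuine divergence, and the place where you have an unfinished step, is $(d)\Rightarrow(e)$. The paper does this directly and uniformly in $\lambda^0$: from $L_q(\lambda)\hookrightarrow\Delta_q(\mu)$ and the filtration of Proposition~\ref{prop3c}, a minimal-piece argument gives $L_q(\lambda)\hookrightarrow\Delta_\C(\mu'^1)^{[l]}\otimes L_q(\mu'^0)$; comparing $u_q$-isotypes forces $\lambda^0=\mu'^0$, and then Proposition~\ref{prop3} (applied to the $L_q(\lambda^0)$-isotypic $u_q$-socle) yields $L_\C(\lambda^1)\hookrightarrow\Delta_\C(\mu'^1)$, whence $\lambda^1$ is antidominant by the classical $\mathcal{O}$-result. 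Your route instead applies the functor $\mathrm{F}=\Hom_{u_q}(St_l,{}_-)^{[-l]}$, which by Proposition~\ref{prop7} kills $L_q(\lambda)$ unless $\lambda^0=(l-1)\rho$, so it only settles the special block, and you then try to reduce by tensoring with $L_q((l-1)\rho-\lambda^0)$. This reduction requires exactly what you flag as delicate, and it is a real gap: you must show that $St_l$ embeds $U_q$-equivariantly into $L_q(\lambda^0)\otimes L_q((l-1)\rho-\lambda^0)$, so that $L_\C(\lambda^1)^{[l]}\otimes St_l = L_q(l\lambda^1+(l-1)\rho)$ sits in the socle. This can in fact be repaired — the $(l-1)\rho$-weight space of $L_q(\lambda^0)\otimes L_q((l-1)\rho-\lambda^0)$ is one-dimensional, spanned by a $U_q$-highest weight vector, and the $U_q$-submodule it generates is a finite-dimensional quotient of $\Delta_q((l-1)\rho)$ and hence of the Weyl module $V_q((l-1)\rho)$, which is known to coincide with $St_l$ (a dimension count suffices) — but this extra lemma must be proved, and it is avoided entirely by the paper's direct application of Proposition~\ref{prop3c}. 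If you keep your route you should add and prove this lemma; otherwise the paper's argument is shorter and works uniformly for all $\lambda^0$.
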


\begin{proof}
The corresponding statement for $\mathcal{O}_{\mathrm{int}}$ (and also for its parabolic subcategories) is 
well-known, see e.g. Addendum and Proposition~4.3 in \cite{Ir}. Hence
Theorem~\ref{thm9} implies the claim for $\lambda \in lX + (l-1)\rho$. 

Note that $Q_q(\lambda^0)$ is self-dual. Hence by Theorem \ref{Thm13} and \ref{Thm15} we see that (a) holds if and only if $P_\C(\lambda^1) \simeq I_\C(\lambda^1)$. 

Now it is clear that \eqref{Thm3.9.1-1} implies \eqref{Thm3.9.1-2} and \eqref{Thm3.9.1-2} implies \eqref{Thm3.9.1-3}. Because of Corollary~\ref{cor13a} we have that 
\eqref{Thm3.9.1-4} is a consequence of \eqref{Thm3.9.1-3}. Suppose $L_q(\lambda)$ is a submodule of
$\Delta_q(\mu)$ for some $\mu \in X$. Then Proposition \ref{prop3c} and Proposition~\ref{prop3} show that $L_\C(\lambda^1)$ is a submodule of $\Delta_\C(\nu)$ for some 
$\nu \in X$. By the $\mathcal O$-result this implies that $\lambda^1$ is antidominant. But this is equivalent to \eqref{Thm3.9.1-5}. Finally, \eqref{Thm3.9.1-3} 
implies \eqref{Thm3.9.1-1} by the observations in the beginning of the proof.
\end{proof}

The properties described in Theorem~\ref{Thm3.9.1} appear frequently in various categories associated with Lie
(super)algebras, e.g. see \cite[Theorem~48]{MS} and \cite[Theorems~6.1,6.2]{BS}. For Lie superalgebras this will
be further clarified in Section~\ref{s9}.

\section{BGG reciprocity, Struktursatz and Ringel self-duality}\label{s4}

\subsection{BGG reciprocity in $\mathcal O_q$}\label{s4.1}

The dual Verma module $\Delta^{\star}_q(\lambda)$ is denoted $\nabla_q(\lambda)$. Then we have the following easy but very useful vanishing theorem, cf. \cite[6.12]{Hu}.

\begin{thm}\label{Thm16}
Let $\lambda, \mu \in X$ be arbitrary. Then 
\begin{displaymath}
\Ext^i_{\mathcal O_q} (\Delta_q(\lambda), \nabla_q(\mu)) \simeq \begin{cases}\C 
&\text { if } i = 0 \text { and } \lambda = \mu;\\ 0 &\text { otherwise.}
\end{cases}
\end{displaymath}
\end{thm}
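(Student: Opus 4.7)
The proof naturally splits into the $i = 0$ case (Hom computation) and the $i \geq 1$ case (Ext vanishing).

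\textbf{The Hom case.} I would use a direct weight/socle argument. The dual Verma $\nabla_q(\mu) = \Delta_q(\mu)^\star$ has simple socle $L_q(\mu)$, since $\star$ exchanges head and socle and $\Delta_q(\mu)$ has simple head $L_q(\mu)$. For any nonzero $\phi \in \Hom_{\mathcal{O}_q}(\Delta_q(\lambda),\nabla_q(\mu))$, the image $\operatorname{Im}\phi$ is a nonzero quotient of $\Delta_q(\lambda)$, hence has simple head $L_q(\lambda)$ (using finite length in $\mathcal{O}_q$ from Corollary~\ref{prop6}), and a nonzero submodule of $\nabla_q(\mu)$, hence contains the socle $L_q(\mu)$. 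Weight considerations---$\lambda \leq \mu$ because $\operatorname{Im}\phi \subseteq \nabla_q(\mu)$ has all weights $\leq \mu$; and $\mu \leq \lambda$ because $\mu$ occurs as a weight of $\operatorname{Im}\phi$, which is a quotient of $\Delta_q(\lambda)$---force $\lambda = \mu$, and in that case $[\Delta_q(\lambda):L_q(\lambda)]=1$ forces $\operatorname{Im}\phi = L_q(\lambda)$, so the composition $\Delta_q(\lambda) \twoheadrightarrow L_q(\lambda) \hookrightarrow \nabla_q(\lambda)$ provides the unique nonzero map up to scalar.

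\textbf{The Ext case.} I would proceed via a dimension-shift using the injective envelope. By Theorem~\ref{Thm14}, $\mathcal{O}_q$ has enough injectives; dualizing Corollary~\ref{cor13a} via $\star$, every indecomposable injective $I_q(\nu) = P_q(\nu)^\star$ has a $\nabla$-flag, and in $I_q(\mu)$ the subobject $\nabla_q(\mu)$ sits as the unique bottom layer (dual to $\Delta_q(\mu)$ being the top of the Verma flag of $P_q(\mu)$). From the short exact sequence
\[
0 \to \nabla_q(\mu) \to I_q(\mu) \to C \to 0,
\]
with $C$ admitting a finite $\nabla$-flag by $\nabla_q(\nu)$'s for $\nu > \mu$ strictly, and the vanishing $\Ext^j_{\mathcal{O}_q}(\Delta_q(\lambda), I_q(\mu)) = 0$ for $j \geq 1$, the long exact sequence yields isomorphisms
\[
\Ext^{i-1}_{\mathcal{O}_q}(\Delta_q(\lambda), C) \cong \Ext^i_{\mathcal{O}_q}(\Delta_q(\lambda), \nabla_q(\mu)) \quad (i \geq 2)
\]
and reduces the $\Ext^1$-vanishing to surjectivity of $\Hom(\Delta_q(\lambda), I_q(\mu)) \to \Hom(\Delta_q(\lambda), C)$. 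A suitable induction on the length of the $\nabla$-flag of $I_q(\mu)$ (finite by Corollary~\ref{prop6}) then propagates the vanishing from the base case $I_q(\mu) = \nabla_q(\mu)$ (where there is nothing to prove), via successive short exact sequences $0 \to C' \to C \to \nabla_q(\nu) \to 0$ along the filtration of $C$, combined with the Hom computation of the first part to handle the $i = 1$ surjectivity check at each step.

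\textbf{Main obstacle.} The essential technical difficulty is organizing the induction to be well-founded: the standard partial order on $X$ is not well-founded upward, and at a root of unity a block of $\mathcal{O}_q$ can contain infinitely many weights strictly greater than $\mu$. What saves the argument is that each $I_q(\mu)$ has finite length by Corollary~\ref{prop6}, so its $\nabla$-flag involves only finitely many strictly larger weights $\nu > \mu$. Nevertheless, choosing a numerical invariant on $\mu$ that strictly decreases for these $\nu$ and simultaneously governs both the $\Hom$- and $\Ext$-comparisons is the technical heart of the argument; the remainder reduces to standard long-exact-sequence bookkeeping built on the structural results of Sections~\ref{s.3.7} and~\ref{s.3.8}.
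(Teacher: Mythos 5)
Your $i=0$ argument is correct and is essentially what the paper means by ``easy weight arguments.'' The dimension-shift idea for $i>0$ is also the right one, but the induction you sketch is genuinely not well-founded, and you have correctly identified this as the problem without supplying the fix. Inducting on the length of the $\nabla$-flag of $I_q(\mu)$ cannot work: the subquotients of $C$ are $\nabla_q(\nu)$ with $\nu>\mu$, and there is no reason for $I_q(\nu)$ to have a shorter flag than $I_q(\mu)$. No ``numerical invariant that strictly decreases'' is needed, because one should not be inducting on weights at all.

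The paper's resolution is a simple reduction you are missing. First apply the duality $\star$, which gives $\Ext^i_{\mathcal O_q}(\Delta_q(\lambda),\nabla_q(\mu))\cong\Ext^i_{\mathcal O_q}(\Delta_q(\mu),\nabla_q(\lambda))$; this lets one assume $\lambda\not<\mu$. Then take the short exact sequence $0\to K\to P_q(\lambda)\to\Delta_q(\lambda)\to 0$ from Corollary~\ref{cor13a}, where $K$ has a Verma filtration with subquotients $\Delta_q(\lambda')$, all $\lambda'>\lambda$. The crucial consequence of the reduction is that every such $\lambda'$ satisfies $\lambda'\neq\mu$ (since $\lambda'=\mu$ would give $\lambda<\mu$) and indeed $\lambda'\not<\mu$. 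Therefore $\Hom(\Delta_q(\lambda'),\nabla_q(\mu))=0$ for all of them by the $i=0$ case, which immediately gives $\Hom(K,\nabla_q(\mu))=0$ and hence $\Ext^1(\Delta_q(\lambda),\nabla_q(\mu))=0$; and for $i\geq 2$ one has $\Ext^i(\Delta_q(\lambda),\nabla_q(\mu))\cong\Ext^{i-1}(K,\nabla_q(\mu))$, which vanishes by a long-exact-sequence chase along the Verma filtration of $K$ together with \emph{induction on $i$ alone}, not on any weight-theoretic quantity. Your injective-side version would work equally well once you insert the dual reduction ($\mu\not<\lambda$, so that the $\nu>\mu$ never equal $\lambda$) and replace the flag-length induction by downward induction on the cohomological degree $i$. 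The choice of resolving $\Delta_q(\lambda)$ by $P_q(\lambda)$ versus $\nabla_q(\mu)$ by $I_q(\mu)$ is cosmetic; the missing idea is the preliminary duality reduction that removes the diagonal from all subsequent weight comparisons.
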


\begin{proof}
Note that $\Ext^i_{\mathcal O_q} (\Delta_q(\lambda), \nabla_q(\mu)) \simeq \Ext^i_{\mathcal O_q} (\Delta_q(\mu), \nabla_q(\lambda))$ by duality. This allows us to assume that $\lambda
\not < \mu$.
Easy weight arguments show that the theorem holds for $i=0$. Now by Corollary~\ref{cor13a} all projectives in $\mathcal O_q$ have Verma filtrations.
Moreover, we have a short exact sequence $0 \to K \to P_q(\lambda) \to \Delta_q(\lambda) \to 0$ with $K$  having a Verma filtration where all subfactors $\Delta_q(\lambda')$ have
$\lambda' > \lambda$. The $i>0$ part of the theorem follows 
then from this sequence by a dimension shift argument.
\end{proof}

As a consequence we see that if $M \in \mathcal O_q$ has a Verma (resp. dual Verma) filtration 
then the number of occurrences $(M:\Delta_q(\lambda))$ (resp. $(M:\nabla_q(\lambda))$ of
$\Delta_q(\lambda))$ (resp. $\nabla_q(\lambda))$ in this filtration equals the dimension of 
$\Hom_{\mathcal O_q}(M, \nabla_q(\lambda))$ (resp. $\Hom_{\mathcal O_q}(\Delta_q(\lambda), M)$). 
This immediately leads to the following BGG-reciprocity laws:

\begin{cor}\label{Cor17}
Let $\lambda, \mu \in X$. Then
\begin{displaymath}
 (P_q(\lambda):\Delta_q(\mu)) = [\Delta_q(\mu):L_q(\lambda)] = (I_q(\lambda): \nabla_q(\mu)).
 \end{displaymath}
 \end{cor}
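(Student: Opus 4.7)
The plan is to deduce the reciprocity directly from the orthogonality Theorem~\ref{Thm16} combined with the fact, recorded in Corollary~\ref{cor13a}, that every projective in $\mathcal O_q$ carries a Verma filtration (and dually, that every injective carries a dual Verma filtration via the exactness of $(\ _)^\star$).

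The first step is the ``Verma filtration detection'' lemma already alluded to in the sentence preceding the corollary: if $M\in\mathcal O_q$ admits a Verma filtration, then for any $\lambda\in X$ the multiplicity $(M:\Delta_q(\lambda))$ equals $\dim\Hom_{\mathcal O_q}(M,\nabla_q(\lambda))$. I would prove this by induction on the length of the filtration, using a short exact sequence $0\to M'\to M\to\Delta_q(\nu)\to 0$ with $M'$ having a strictly shorter Verma filtration. Applying $\Hom_{\mathcal O_q}(-,\nabla_q(\lambda))$ and using the $i=0,1$ parts of Theorem~\ref{Thm16} (which force $\Ext^1_{\mathcal O_q}(\Delta_q(\nu),\nabla_q(\lambda))=0$ and $\Hom_{\mathcal O_q}(\Delta_q(\nu),\nabla_q(\lambda))=\C\delta_{\nu\lambda}$), the long exact sequence collapses to give the additivity $\dim\Hom_{\mathcal O_q}(M,\nabla_q(\lambda))=\dim\Hom_{\mathcal O_q}(M',\nabla_q(\lambda))+\delta_{\nu\lambda}$, which matches the multiplicity count. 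The dual assertion, that $(N:\nabla_q(\mu))=\dim\Hom_{\mathcal O_q}(\Delta_q(\mu),N)$ for $N$ with a dual Verma filtration, is proved identically.

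Second, I specialize to $M=P_q(\lambda)$, which has a Verma filtration by Corollary~\ref{cor13a}. The step one identity gives
\begin{displaymath}
(P_q(\lambda):\Delta_q(\mu))=\dim\Hom_{\mathcal O_q}(P_q(\lambda),\nabla_q(\mu)).
\end{displaymath}
Because $P_q(\lambda)$ is the projective cover of $L_q(\lambda)$, the right hand side equals the multiplicity of $L_q(\lambda)$ in the head-filtration of $\nabla_q(\mu)$ counted composition-factor-wise, i.e. $[\nabla_q(\mu):L_q(\lambda)]$. Since duality $\star$ is exact and simple-preserving, $[\nabla_q(\mu):L_q(\lambda)]=[\Delta_q(\mu):L_q(\lambda)]$, giving the first equality. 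The second equality is obtained symmetrically: $I_q(\lambda)=P_q(\lambda)^\star$ has a dual Verma filtration, so by the dual of step one,
\begin{displaymath}
(I_q(\lambda):\nabla_q(\mu))=\dim\Hom_{\mathcal O_q}(\Delta_q(\mu),I_q(\lambda))=[\Delta_q(\mu):L_q(\lambda)],
\end{displaymath}
where the last identity is the defining property of the injective envelope.

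There is no real obstacle here; the argument is the standard BGG reciprocity proof, and all its inputs (Ext-orthogonality, existence of Verma flags on projectives, simple-preserving duality, projective cover / injective envelope characterizations) have already been established earlier in the paper. The only mild subtlety to keep in mind is to invoke $(P_q(\lambda):\Delta_q(\mu))$ and $(I_q(\lambda):\nabla_q(\mu))$ as well-defined (independent of the chosen filtration), which is itself a consequence of the step one formula.
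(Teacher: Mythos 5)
Your argument is correct and is essentially the proof the paper has in mind: the sentence preceding the corollary records exactly the Verma-filtration detection lemma you prove by induction from Theorem~\ref{Thm16}, and the corollary is then derived (as you do) from $\dim\Hom(P_q(\lambda),-)=[-:L_q(\lambda)]$, the simple-preserving duality $\star$, and Corollary~\ref{cor13a}. You have just written out the details that the paper compresses into ``this immediately leads to.''
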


In other words, the above means that $\mathcal{O}_q$ is a highest weight category in the sense
of \cite{CPS} (with infinitely many isomorphism classes of simple modules).

\subsection{The category $\cC$}\label{s4.2}

Let $\cC$ denote the full subcategory of $\mathcal{O}_q$ with objects $P_q(\lambda)$, $\lambda\in X$.
For simplicity we will identify objects of $\cC$ with elements in $X$. Then Proposition~\ref{prop6} implies
that $\cC$ is a locally finite dimensional $\mathbb{C}$-linear category (we refer to \cite{MOS} for 
generalities on representations of $\C$-linear categories). Moreover, from Proposition~\ref{prop6} and
Theorems~\ref{Thm12} and \ref{Thm14} it follows that for any $\lambda\in X$ there exists 
only finitely many $\mu\in X$ such that $\cC(\lambda,\mu)\neq 0$ and that  for any $\lambda\in X$ there exists 
only finitely many $\mu\in X$ such that $\cC(\mu,\lambda)\neq 0$.

Let $\cC\text{-}\mathrm{mod}$ (resp. $\mathrm{mod}\text{-}\cC$) denote the category of finite dimensional 
left (resp. right) $\cC$-modules, that is covariant (resp. contravariant) functors 
$\mathrm{M}:\cC\to \mathbb{C}\text{-}\mathrm{mod}$ (the latter being the category of  
finite dimensional complex vector spaces) satisfying $\sum_{\lambda\in X}\dim \mathrm{M}(\lambda)<\infty$. 
Then abstract nonsense  (see e.g. \cite{Ga}) implies that $\mathcal{O}_q$ is equivalent 
to $\mathrm{mod}\text{-}\cC$ and the latter is equivalent to $\cC\text{-}\mathrm{mod}$ by duality.

\subsection{Dominance dimension and Soergel's Struktursatz}\label{s4.3}

\begin{prop}\label{prop4.3.1}
The category  $\mathcal{O}_q$ has dominance dimension at least two with respect to projective-injective 
modules, that is for any projective module $P\in \mathcal{O}_q$ there exists an exact sequence
\begin{equation}\label{eq11}
0\to P\to X_1\to X_2, 
\end{equation}
where both $X_1$ and $X_2$ are projective-injective.
\end{prop}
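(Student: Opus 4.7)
My plan is to reduce the problem to indecomposable projectives and then transfer the classical Struktursatz for $\mathcal{O}_{\mathrm{int}}$ across the tensor decomposition provided by Theorem~\ref{Thm13}. Direct sums of sequences of the required form $0 \to P \to X_1 \to X_2$ (with $X_1,X_2$ projective-injective) remain of this form, and every projective in $\mathcal{O}_q$ is a finite direct sum of indecomposable projectives $P_q(\lambda)$; hence it suffices to prove the claim for $P = P_q(\lambda)$ with $\lambda \in X$ arbitrary.

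By Theorem~\ref{Thm13} we have $P_q(\lambda) \simeq P_\C(\lambda^1)^{[l]} \otimes Q_q(\lambda^0)$. The classical Struktursatz of Soergel (cf.\ \cite{Hu}), applied block by block, provides an exact sequence
\begin{displaymath}
0 \to P_\C(\lambda^1) \to Y_1 \to Y_2
\end{displaymath}
in $\mathcal{O}_{\mathrm{int}}$, where each $Y_i$ is a finite direct sum of indecomposable projective-injective modules $P_\C(\mu_j)$ with every $\mu_j$ antidominant. Now apply the functor $(-)^{[l]} \otimes Q_q(\lambda^0)$, which is exact since $Fr_q$-pullback is exact and tensoring over $\C$ with the finite-dimensional module $Q_q(\lambda^0)$ preserves exactness. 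The resulting sequence
\begin{displaymath}
0 \to P_q(\lambda) \to Y_1^{[l]} \otimes Q_q(\lambda^0) \to Y_2^{[l]} \otimes Q_q(\lambda^0)
\end{displaymath}
is exact in $\mathcal{O}_q$, and each summand $P_\C(\mu_j)^{[l]} \otimes Q_q(\lambda^0)$ equals $P_q(l\mu_j + \lambda^0)$ by Theorem~\ref{Thm13} (since $\lambda^0 \in X_l$ realises the $l$-adic expansion of $l\mu_j + \lambda^0$). It remains to verify that $l\mu_j + \lambda^0$ is antidominant. For any positive coroot $\alpha^\vee = \sum_i c_i \alpha_i^\vee$ with $c_i \in \Z_{\geq 0}$, a direct computation gives
\begin{displaymath}
\langle l\mu_j + \lambda^0 + \rho,\, \alpha^\vee \rangle = l\langle \mu_j + \rho,\, \alpha^\vee \rangle + \sum_i c_i \bigl(\langle \lambda^0, \alpha_i^\vee \rangle - (l-1)\bigr) \leq 0,
\end{displaymath}
using antidominance of $\mu_j$ for the first summand and $\langle \lambda^0, \alpha_i^\vee\rangle \leq l-1$ (from $\lambda^0 \in X_l$) for the second. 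Hence Theorem~\ref{Thm3.9.1} yields $P_q(l\mu_j+\lambda^0) \simeq I_q(l\mu_j+\lambda^0)$, so each summand of $Y_i^{[l]} \otimes Q_q(\lambda^0)$ is projective-injective, as required.

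I expect the main obstacle to lie entirely in the classical input, not in the transfer step: once Theorem~\ref{Thm13} and the classical Struktursatz for $\mathcal{O}_{\mathrm{int}}$ are both available, the argument above is essentially a tensor-product manipulation combined with an antidominance check. The classical Struktursatz itself rests on Soergel's analysis of the endomorphism ring of the big projective-injective $P_\C(w_0\cdot\lambda)$ in each integral block and should be cited rather than reproved.
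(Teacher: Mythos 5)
Your proof is correct and is based on the same fundamental idea as the paper's: use the known classical dominance-dimension statement for $\mathcal{O}_{\mathrm{int}}$ and transport it to $\mathcal{O}_q$ via the tensor decomposition of Theorem~\ref{Thm13}. The difference lies in how you carry out the transfer. The paper starts by moving the classical two-step coresolution into the special block $\mathcal{O}_q^{\mathrm{spec}}$ via the equivalence in Theorem~\ref{thm9}, and then notes that every indecomposable projective $P_q(\lambda)$ is a direct summand of a projective from $\mathcal{O}_q^{\mathrm{spec}}$ tensored with a finite-dimensional module; since $V\otimes{}_-$ (for $V$ finite dimensional) is biadjoint to the exact functor $V^*\otimes{}_-$, it preserves projective-injectives, so the sequence transfers and one takes the relevant summand. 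You instead apply the exact functor $({}_-)^{[l]}\otimes Q_q(\lambda^0)$ directly to a classical coresolution of $P_\C(\lambda^1)$, identify the resulting terms as $P_q(l\mu_j+\lambda^0)$ via Theorem~\ref{Thm13} (using that $\lambda^0\in X_l$ is preserved), and then verify by an explicit pairing computation that $l\mu_j+\lambda^0$ is antidominant, invoking Theorem~\ref{Thm3.9.1} to conclude projective-injectivity. What the paper's abstract biadjunction argument buys is that one never has to identify the target objects or check antidominance at all; what your more explicit route buys is that you name the projective-injectives exactly and avoid the detour through the special block. Your antidominance computation is correct (and could in fact be shortened: since $(l\mu_j+\lambda^0)^1=\mu_j$ is antidominant, the argument inside the proof of Theorem~\ref{Thm3.9.1} already gives $P_q\simeq I_q$, so the explicit check on all positive coroots is not strictly needed). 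One small point: the classical input the paper uses is the dominance-dimension result, for which it cites \cite[3.1]{KSX}, rather than the Struktursatz itself; these are closely related but not identical statements.
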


\begin{proof}
This claim is well-known for $\mathcal{O}_{\mathrm{int}}$, see e.g. \cite[3.1]{KSX}. Hence
Theorem~\ref{thm9} implies the claim for $P\in \mathcal{O}_q^{\mathrm{spec}}$. By Theorem~\ref{Thm13},
every indecomposable projective can be obtained by tensoring an indecomposable projective from 
$\mathcal{O}_q^{\mathrm{spec}}$ with a finite dimensional module and taking direct summand. As this tensoring is both left
and right adjoint to an exact functor, it preserves projective-injective modules. Hence such tensoring
maps a sequence of the form \eqref{eq11} to a sequence of the form \eqref{eq11} and the claim follows.
\end{proof}

Denote by $\cC^{PI}$ the full subcategory of $\cC$ whose objects are all antidominant 
$\lambda\in X$, that is those $\lambda\in X$ for which 
the projective module $P_q(\lambda)$ is also injective (see Theorem~\ref{Thm3.9.1}). 
For $\lambda\in X$ define 
\begin{displaymath}
\mathrm{M}_{\lambda}:= \mathrm{Hom}_{\mathcal{O}_q}({}_-,P_q(\lambda))\in 
\mathrm{mod}\text{-}\cC^{PI} .
\end{displaymath}
Let $\overline{\cC}$ denote the full subcategory of $\mathrm{mod}\text{-}\cC^{PI}$ with objects 
$\mathrm{M}_{\lambda}$, $\lambda\in X$.

Define a functor $\Phi:\cC\to \overline{\cC}$ as follows: on objects we set 
$\Phi(\lambda):=\mathrm{M}_{\lambda}$, $\lambda\in X$; if $\lambda,\mu\in X$ and
$\varphi\in \mathrm{Hom}_{\mathcal{O}_q}(P_q(\lambda),P_q(\mu))$, then set
\begin{displaymath}
\Phi(\varphi):=\varphi\circ{}_-:
\mathrm{Hom}_{\mathcal{O}_q}({}_-,P_q(\lambda))\to 
\mathrm{Hom}_{\mathcal{O}_q}({}_-,P_q(\mu)).
\end{displaymath}
The following result generalizes  \cite[Struktursatz]{So}.

\begin{thm}\label{thm4.3.2}
The functor $\Phi$ is an isomorphism of categories.
\end{thm}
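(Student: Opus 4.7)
The plan is to verify that $\Phi$ is bijective on objects (which is immediate from the construction) and bijective on morphism spaces, i.e.\ that for all $\lambda,\mu\in X$ the map
\begin{displaymath}
\Phi_{\lambda,\mu}:\cC(\lambda,\mu)\to \overline{\cC}(\mathrm{M}_\lambda,\mathrm{M}_\mu)
\end{displaymath}
is an isomorphism. The central ingredient is Proposition~\ref{prop4.3.1}: for every $\lambda\in X$ there is an exact sequence
\begin{displaymath}
0\to P_q(\lambda)\xrightarrow{\iota} X_1\xrightarrow{g} X_2
\end{displaymath}
in $\mathcal{O}_q$ with $X_1,X_2\in\mathrm{add}(\cC^{PI})$. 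Applying the left exact functor $\mathrm{Hom}_{\mathcal{O}_q}(-,\bullet)$ and restricting to $\cC^{PI}$ produces the exact sequence
\begin{displaymath}
0\to\mathrm{M}_\lambda\to\mathrm{M}_{X_1}\to\mathrm{M}_{X_2}
\end{displaymath}
in $\mathrm{mod}\text{-}\cC^{PI}$, in which $\mathrm{M}_{X_1}$ and $\mathrm{M}_{X_2}$ are projective by Yoneda.

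The key auxiliary fact I would establish first is that, for antidominant $\nu$, the representable $\mathrm{M}_\nu$ is also \emph{injective} in $\mathrm{mod}\text{-}\cC^{PI}$. Indeed, by Theorem~\ref{Thm3.9.1} we have $P_q(\nu)\cong I_q(\nu)$ for antidominant $\nu$, so the duality $\star$ of $\mathcal{O}_q$ restricts to a contravariant self-equivalence of $\cC^{PI}$ that is the identity on objects. The standard vector-space duality on $\mathrm{mod}\text{-}\cC^{PI}$ twisted by $\star$ therefore yields a contravariant self-equivalence of $\mathrm{mod}\text{-}\cC^{PI}$ which swaps projective and injective indecomposables while sending the representable $\mathrm{M}_\nu$ to itself; hence each such $\mathrm{M}_\nu$ is projective-injective.

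With this in hand, I would treat the case $\mu=\nu$ antidominant. Given $\eta\in\overline{\cC}(\mathrm{M}_\lambda,\mathrm{M}_\nu)$, injectivity of $\mathrm{M}_\nu$ extends $\eta$ along the inclusion $\mathrm{M}_\lambda\hookrightarrow\mathrm{M}_{X_1}$ to some $\tilde\eta:\mathrm{M}_{X_1}\to\mathrm{M}_\nu$; Yoneda applied at $X_1\in\mathrm{add}(\cC^{PI})$ identifies $\tilde\eta$ with a morphism $\tilde\varphi:X_1\to P_q(\nu)$, and $\varphi:=\tilde\varphi\circ\iota$ then satisfies $\Phi(\varphi)=\eta$, giving surjectivity. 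For injectivity of $\Phi_{\lambda,\nu}$, if $\Phi(\varphi)=0$ then $\tilde\eta$ vanishes on $\mathrm{M}_\lambda$ and factors through the inclusion $\mathrm{M}_{X_1}/\mathrm{M}_\lambda\hookrightarrow\mathrm{M}_{X_2}$; a second application of the injectivity of $\mathrm{M}_\nu$ extends this factorization to $\mathrm{M}_{X_2}\to\mathrm{M}_\nu$, which via Yoneda yields $\tilde\varphi':X_2\to P_q(\nu)$ with $\tilde\varphi=\tilde\varphi'\circ g$, so $\varphi=\tilde\varphi'\circ g\circ\iota=0$.

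To finish, for arbitrary $\mu\in X$ I would apply Proposition~\ref{prop4.3.1} once more to obtain $0\to P_q(\mu)\to Y_1\to Y_2$ with $Y_i\in\mathrm{add}(\cC^{PI})$. Exactness of $\mathrm{Hom}_{\mathcal{O}_q}(P_q(\lambda),-)$ (since $P_q(\lambda)$ is projective) and left exactness of $\overline{\cC}(\mathrm{M}_\lambda,-)$ produce two four-term left-exact sequences whose second and third terms are canonically identified by the antidominant case applied to $Y_1$ and $Y_2$; the five lemma then forces $\Phi_{\lambda,\mu}$ to be a bijection. The main obstacle is the self-injectivity statement for the representables $\mathrm{M}_\nu$: without the $\star$-induced self-duality of $\cC^{PI}$ there would be no extension mechanism along $\mathrm{M}_\lambda\hookrightarrow\mathrm{M}_{X_1}$ and the antidominant case would break down.
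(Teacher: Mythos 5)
Your overall plan is close in spirit to the paper's proof: both use Proposition~\ref{prop4.3.1} to produce two-step coresolutions by projective-injectives, pass to $\mathrm{mod}\text{-}\cC^{PI}$, and lift maps along these coresolutions. Your injectivity argument for $\Phi_{\lambda,\mu}$ differs from the paper's, which instead argues directly from Theorem~\ref{Thm3.9.1}: a nonzero $\varphi:P_q(\lambda)\to P_q(\mu)$ has image containing a simple $L$ in the socle of $P_q(\mu)$, and $L$ is the top of some projective-injective $P$, so the surjection $P\twoheadrightarrow L$ lifts through $\varphi$ and exhibits $\Phi(\varphi)\neq 0$; this avoids any injectivity statement in $\mathrm{mod}\text{-}\cC^{PI}$ for that half of the theorem.

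The genuine gap is in your ``key auxiliary fact.'' You claim that the vector-space duality on $\mathrm{mod}\text{-}\cC^{PI}$, twisted by $\star$, is a contravariant self-equivalence that sends $\mathrm{M}_\nu$ to itself. What that composite does send $\mathrm{M}_\nu$ to is the \emph{injective envelope} of the simple module concentrated at $\nu$: the functor swaps projectives and injectives and fixes simples, so $\mathrm{M}_\nu$ (the projective cover of $S_\nu$) is sent to the injective envelope of $S_\nu$. Asserting these coincide is precisely the self-injectivity of $\cC^{PI}$ you are trying to establish --- the $\star$-induced identification $\cC^{PI}\cong(\cC^{PI})^{\mathrm{op}}$ fixing objects does not yield it (any commutative algebra has such an anti-automorphism, yet most are not self-injective; the twist lands you on $\cC^{PI}(\mu,\nu)^*$, which is not naturally $\cC^{PI}(\mu,\nu)$ absent a Frobenius form). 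The fact you need --- that $\mathrm{Hom}_{\mathcal{O}_q}({}_-,X)\vert_{\cC^{PI}}$ is injective in $\mathrm{mod}\text{-}\cC^{PI}$ for $X$ projective-injective --- is what makes the two coresolutions ``injective resolutions,'' and it is a nontrivial consequence of dominance dimension $\geq 2$ in the Morita--Tachikawa/KSX framework (which is the source the paper cites for Proposition~\ref{prop4.3.1}). Either invoke that theory explicitly, or replace your injectivity-of-$\Phi$ argument by the paper's direct one and keep the coresolution lifting only for surjectivity, where the paper also tacitly relies on this same injectivity of the representables.
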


\begin{proof}
By definition, $\Phi$ induces a bijection on objects. So we need only to check that it
induces a bijection on morphisms, that is that for any  $\lambda,\mu\in X$ the map
$\Phi_{\lambda,\mu}:\mathrm{Hom}_{\mathcal{O}_q}(P_q(\lambda),P_q(\mu))\to
\overline{\cC}(\mathrm{M}_{\lambda},\mathrm{M}_{\mu})$ is an isomorphism.
This is clear if both $P_q(\lambda)$ and $P_q(\mu)$ are injective.

By Proposition~\ref{prop4.3.1}, the injective envelope of $P_q(\mu)$ is projective.
Observe that, if $\varphi\in \mathrm{Hom}_{\mathcal{O}_q}(P_q(\lambda),P_q(\mu))$ is nonzero,
then the image of $\varphi$ contains a simple submodule $L$ in the socle of $P_q(\mu)$.
By Theorem~\ref{Thm3.9.1}, $L$ is a homomorphic image of some projective-injective
module $P$. By the projectivity of $P$, the surjection $f:P\rightarrow L$ lifts 
to a map $f':P\to P_q(\lambda)$ such that $f=\varphi\circ f'$. This implies that 
$\Phi_{\lambda,\mu}(\varphi)$ is nonzero and hence $\Phi_{\lambda,\mu}$ is injective.

To prove surjectivity let $\lambda,\mu\in X$ and $f\in\overline{\cC}(\mathrm{M}_{\lambda},\mathrm{M}_{\mu})$.
By Proposition~\ref{prop4.3.1}, there are exact sequences
\begin{displaymath}
0\rightarrow P_q(\lambda)\rightarrow X_1\rightarrow X_2\quad\text \quad
0\rightarrow P_q(\mu)\rightarrow Y_1\rightarrow Y_2
\end{displaymath}
in $\mathcal{O}_q$ such that $X_1$, $X_2$, $Y_1$ and $Y_2$ are projective-injective. 
Applying the covariant functor $\mathrm{Hom}_{\mathcal{O}_q}({}_-,{}_-)$ to these exact sequence
yields injective resolution for both $\mathrm{M}_{\lambda}$ and $\mathrm{M}_{\mu}$ in 
$\mathrm{mod}\text{-}\cC^{PI}$. The map $f$ admits lifts giving the following commutative diagram:
\begin{displaymath}
\xymatrix{ 
0\ar[rr]&& \mathrm{M}_{\lambda}\ar[rr]\ar[d]^f && 
\mathrm{Hom}_{\mathcal{O}_q}({}_-,X_1)\ar[rr]\ar[d]^{f'} && 
\mathrm{Hom}_{\mathcal{O}_q}({}_-,X_2)\ar[d]^{f''}\\
0\ar[rr]&& \mathrm{M}_{\mu}\ar[rr] && 
\mathrm{Hom}_{\mathcal{O}_q}({}_-,Y_1)\ar[rr]&& \mathrm{Hom}_{\mathcal{O}_q}({}_-,Y_2)\\
}
\end{displaymath}
As $X_1$, $X_2$, $Y_1$ and $Y_2$ are projective-injective, the right hand square of the latter diagram
is the image of the right hand square of some commutative diagram of the form
\begin{displaymath}
\xymatrix{ 
0\ar[rr]&& P_q(\lambda)\ar[rr]\ar@{.>}[d]^{\varphi} && X_1\ar[rr]\ar[d] && X_2\ar[d]\\
0\ar[rr]&& P_q(\mu)\ar[rr] && Y_1\ar[rr]&& Y_2\\
}
\end{displaymath}
As both rows are exact, the commutative right hand square of the latter diagram induces
a unique $\varphi:P_q(\lambda)\to P_q(\mu)$ making the digram commutative and we have 
$\Phi_{\lambda,\mu}(\varphi)=f$. This proves surjectivity and completes the proof.
\end{proof}

Statements similar to Proposition~\ref{prop4.3.1} and Theorem~\ref{thm4.3.2} appear frequently and play 
important role in Lie-theoretic context (sometimes in disguise), see e.g. \cite[Theorem~10.1]{St}, 
\cite[Theorem~3.9]{St2} and \cite[Corollary~2]{Ma}. Our proof above follows the approach of \cite{KSX}.
Making a parallel with the results of \cite{MM}, we propose the following conjecture:

\begin{conj}
The category $\cC^{PI}$ is symmetric, i.e. the bimodules $\cC^{PI}({}_-,{}_-)^*$ and
$\cC^{PI}({}_-,{}_-)$ are isomorphic.
\end{conj}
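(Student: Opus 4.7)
My plan is to reduce the conjecture to symmetry results for two more tractable categories, via the tensor-product structure of projective-injective modules in $\mathcal{O}_q$.

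The first step is to unwind $\cC^{PI}$ using Theorems~\ref{Thm3.9.1}, \ref{Thm13}, and \ref{Thm15}. Its objects are the antidominant $\lambda\in X$, and for each such $\lambda$ the projective-injective module is $P_q(\lambda) \simeq P_\C(\lambda^1)^{[l]} \otimes Q_q(\lambda^0)$ with $\lambda^1$ antidominant and $\lambda^0 \in X_l$. Applying the adjunction \eqref{eq3} together with the fact that $u_q$ acts trivially on any Frobenius twist, one obtains a natural identification
\begin{displaymath}
\cC^{PI}(\lambda,\mu) \simeq \Hom_{\mathcal{O}_{\mathrm{int}}}\!\left(P_\C(\lambda^1),\, P_\C(\mu^1)\otimes H(\lambda^0,\mu^0)\right),
\end{displaymath}
where $H(\lambda^0,\mu^0):=\Hom_{u_q}(Q_q(\lambda^0),Q_q(\mu^0))^{[-l]}$ is a finite-dimensional $U_\C$-module. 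Since composition factors through both the outer Hom and the $U_\C$-structure on the $H$'s, this exhibits $\cC^{PI}$ as a ``twisted tensor product'' of the classical subcategory $\mathcal{C}_{\mathrm{int}}^{PI}$ of antidominant projective-injectives in $\mathcal O_{\mathrm{int}}$ with a category built from $u_q$-Hom spaces.

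The second step is to supply symmetry for each factor separately. Symmetry of $\mathcal{C}_{\mathrm{int}}^{PI}$ is essentially Soergel's Endomorphismensatz: in each integral block the endomorphism algebra of the (up to isomorphism unique) antidominant projective-injective is a quotient of the coinvariant algebra of a finite Coxeter subgroup of $W$, which is a graded symmetric Frobenius algebra. For the $u_q$-side the key input is that $u_q$ at an odd root of unity is itself a symmetric Hopf algebra (via its ribbon structure, cf. Kerler); consequently $u_q\text{-mod}$ has symmetric Hom-bimodules on its projective generators, and this passes to the $H(\lambda^0,\mu^0)$ by the fact, used in the proof of Theorem~\ref{Thm13}, that each restriction $Q_q(\lambda^0)|_{u_q}$ is the projective cover of $L_q(\lambda^0)$ in $u_q\text{-mod}$.

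The main obstacle is combining these two symmetric structures into a single symmetric form on $\cC^{PI}$. The difficulty is that $H(\lambda^0,\mu^0)$ is in general a nontrivial $U_\C$-module, so that $\cC^{PI}(\lambda,\mu)$ is not simply the tensor product of $\mathcal{C}_{\mathrm{int}}^{PI}(\lambda^1,\mu^1)$ with a ``quantum piece'', but is twisted by tensoring $P_\C(\mu^1)$ with the finite-dimensional $U_\C$-module $H(\lambda^0,\mu^0)$. I would try to control this twisting by exploiting that tensoring with a finite-dimensional $U_\C$-module is a biadjoint exact endofunctor of $\mathcal{O}_{\mathrm{int}}$, so that Soergel's symmetric form is canonically compatible with such tensoring; the required compatibility between the $\mathcal{O}_{\mathrm{int}}$-side and the $u_q$-side should then follow from a Frobenius-reciprocity-type argument, ideally carried out inside the 2-categorical framework of \cite{MM} where statements of exactly this kind are natural. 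Verifying nondegeneracy of the resulting pairing on $\cC^{PI}$ is where I expect the bulk of the technical work to lie.
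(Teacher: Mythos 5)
This statement is flagged by the authors as an open \emph{conjecture}, not a theorem: the paper gives no proof, and immediately after stating it they remark that the analogous result for Lie superalgebras in \cite{MM} was obtained via a Serre-functor/Harish-Chandra-bimodule argument and that carrying this over ``would require development of the theory of Harish-Chandra bimodules in the quantum case.'' So there is no proof from the paper to compare yours against, and any genuine argument you offer would be new.

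Your sketch correctly unwinds $\cC^{PI}(\lambda,\mu)$ using \eqref{eq3} together with the $u_q$-triviality of Frobenius twists: indeed
$\Hom_{\mathcal O_q}\bigl(P_\C(\lambda^1)^{[l]}\otimes Q_q(\lambda^0),\,P_\C(\mu^1)^{[l]}\otimes Q_q(\mu^0)\bigr)\simeq \Hom_{\mathcal O_{\mathrm{int}}}\bigl(P_\C(\lambda^1),\,P_\C(\mu^1)\otimes H(\lambda^0,\mu^0)\bigr)$ with $H(\lambda^0,\mu^0)=\Hom_{u_q}(Q_q(\lambda^0),Q_q(\mu^0))^{[-l]}$. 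But from there your argument is still a plan, not a proof, and you say so yourself. The gaps you would need to close are substantial. First, the claim that $u_q$ is a \emph{symmetric} (not just Frobenius) algebra requires justification: by the general Hopf-algebra criterion this reduces to unimodularity of $u_q$ plus innerness of $S^2$, and while this is plausible, ``via its ribbon structure'' is not an argument. Second, and more seriously, even granting symmetry of both $\End_{\mathcal{O}_{\mathrm{int}}}$ of antidominant projective-injectives (coinvariant algebras, Soergel's Endomorphismensatz) and of $u_q$-mod on projectives, your Hom spaces are not tensor products of the two pieces: the $U_\C$-module structure on $H(\lambda^0,\mu^0)$ twists the classical factor, composition mixes the two levels, and it is exactly this interaction that must carry a nondegenerate invariant pairing. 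You identify this as ``the bulk of the technical work,'' which is honest, but it is precisely the content of the conjecture --- the two factorwise symmetric forms do not obviously glue, and biadjointness of tensoring with finite-dimensional modules gives compatibility of adjunctions, not a trace form. The authors' own suggested route (Serre functor $\cong$ identity on the subcategory of projective-injectives, deduced from a Harish-Chandra bimodule description of the Serre functor as in \cite{MM}) is a different and arguably more robust strategy, because it produces the trace form and its nondegeneracy in one stroke rather than by assembling two pieces.
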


In \cite{MM} it is shown that some similar categories associated to certain Lie superalgebras are symmetric
using a description of the Serre functor for the corresponding category $\mathcal{O}$ via Harish-Chandra
bimodules. A similar approach for $\cC^{PI}$ would require development of the theory of Harish-Chandra
bimodules in the quantum case.

\subsection{Tilting modules in $\mathcal O_q$}\label{s4.4}

A module $M \in \mathcal O_q$ is called {\em tilting} if $M$ has both a Verma filtration and a dual 
Verma filtration. In $\mathcal O_{int}$ there exists, for each $\lambda \in X$, a unique indecomposable tilting 
module $T_\C(\lambda)$ which has $\lambda$ as its unique highest weight. The same is true in $\mathcal O_q$:

\begin{thm}\label{Thm18}
For each $\lambda \in X$ there exists an indecomposable tilting module $T_q(\lambda)$ with $\lambda$ as its unique highest weight. Every indecomposable tilting
module in $\mathcal O_q$ is isomorphic to $T_q(\lambda)$ for some $\lambda \in X$.
\end{thm}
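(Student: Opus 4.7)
The plan is to follow the standard Ringel construction of tilting modules in a highest weight category. The prerequisites are now in place: Corollary~\ref{Cor17} establishes $\mathcal{O}_q$ as a highest weight category in the sense of \cite{CPS} with weight poset $(X,\leq)$, Theorem~\ref{Thm16} provides Ext-vanishing between Vermas and dual Vermas, Verma modules have finite length by Corollary~\ref{cor3d}, and $\mathcal{O}_q$ has enough projectives admitting Verma flags (Theorem~\ref{Thm12} and Corollary~\ref{cor13a}).

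For existence, fix $\lambda\in X$ and construct $T_q(\lambda)$ iteratively. Set $M_0:=\Delta_q(\lambda)$, and at each step form the universal extension
$$0\to M_i\to M_{i+1}\to \bigoplus_{\mu<\lambda} \Delta_q(\mu)^{\oplus n_\mu(i)}\to 0,\qquad n_\mu(i):=\dim\mathrm{Ext}^1_{\mathcal{O}_q}(\Delta_q(\mu),M_i).$$
Such extensions exist because $\mathcal{O}_q$ has enough projectives; by Theorem~\ref{Thm16} each $M_i$ retains a $\Delta$-filtration beginning with $\Delta_q(\lambda)$ whose remaining sections carry weights strictly less than $\lambda$, and the Ext groups with $\Delta_q(\mu)$ vanish for every $\mu$ previously treated. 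The process terminates after finitely many steps; the resulting module $T_q(\lambda)$ satisfies $\mathrm{Ext}^1(\Delta_q(\mu),T_q(\lambda))=0$ for all $\mu$, and applying the duality $\star$ to the dual construction exhibits a $\nabla$-filtration as well, so $T_q(\lambda)$ is tilting.

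For uniqueness and classification, any indecomposable tilting $T$ has a unique maximal weight $\lambda$ (read off from the top section of any $\Delta$-filtration), and the standard back-and-forth argument with Theorem~\ref{Thm16}---lift the inclusions $\Delta_q(\lambda)\hookrightarrow T, T_q(\lambda)$ via Ext-vanishing to mutually inverse maps up to automorphism, then invoke that indecomposable finite-length modules have local endomorphism rings---yields $T\cong T_q(\lambda)$. The main obstacle is termination of the existence procedure, since the poset $X$ is not well-founded below $\lambda$: one must verify that the set of weights which can appear as Verma subquotients of the $M_i$ is finite. This follows from a linkage-type argument using Theorem~\ref{thm9} and the tensor decompositions of Section~\ref{s3}, reducing to the classical linkage principle in $\mathcal{O}_{\mathrm{int}}$ and the analogous finiteness in $\mathcal{F}_q$. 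Once this is secured, the remaining steps are formally identical to those for $\mathcal{O}$ in \cite[Chapter~11]{Hu}.
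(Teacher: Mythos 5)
Your approach is genuinely different from the paper's. You run the Ringel universal-extension construction directly in $\mathcal{O}_q$, whereas the paper builds $T_q(\lambda)$ by translation: it first sets $T_q(l\mu+(l-1)\rho):=T_{\mathbb C}(\mu)^{[l]}\otimes St_l$ using the equivalence of Theorem~\ref{thm9}, then defines $T_q(\lambda)$ as the indecomposable summand containing the $\lambda$-weight space of $T_{\mathbb C}(\lambda^1-\rho)^{[l]}\otimes St_l\otimes L_q(\lambda^0+\rho)$. The paper's construction produces $T_q(\lambda)$ as a summand of an object which manifestly lies in $\mathcal{O}_q$ and has finite length, so no separate termination argument is needed.

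This is exactly where your argument has a genuine gap. You correctly recognize that termination is ``the main obstacle,'' since the order ideal $\{\mu\le\lambda\}$ in $X$ is infinite. But the proposed remedy --- ``a linkage-type argument using Theorem~\ref{thm9} and the tensor decompositions of Section~\ref{s3}'' --- is not actually given, and it is far from automatic. Unlike $\mathcal{O}_{\mathrm{int}}$, blocks of $\mathcal{O}_q$ can contain infinitely many simple objects: the paper points this out explicitly when discussing the sum in \eqref{eq7}, and the $\mathfrak{sl}_2$ formula \eqref{eq8} shows $L_q(0)$ lying in an infinite block. So the linkage principle alone does \emph{not} confine the Verma subquotients of the $M_i$ to a finite set, and the citation to \cite[Chapter~11]{Hu} does not carry over, since Humphreys's construction relies on each block having a finite weight poset. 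To make the Ringel construction work one would need to prove, independently, an a priori bound of the form: there are only finitely many $\mu<\lambda$ for which $\Delta_q(\mu)$ can ever occur in a self-extension of $\Delta_q(\lambda)$ by lower Vermas inside a module with a $\Delta$-flag of highest weight $\lambda$. In practice the cleanest way to obtain such a bound is precisely via the tensor decomposition of the paper, at which point one might as well produce $T_q(\lambda)$ directly as the paper does. So the proposal, as written, does not constitute a complete proof of the existence half of Theorem~\ref{Thm18}. The uniqueness and classification half (local endomorphism ring plus Theorem~\ref{Thm16}) is fine once existence is secured.
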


There are various ways to prove this (compare e.g. with \cite{So3}), we choose the one which we think is the shortest.

\begin{proof}
The functor $G: \mathcal O_{int} \to \mathcal O_q^{spec}$ clearly takes tilting modules in $\mathcal O_{int}$ to tilting modules in $\mathcal O_q$, see Proposition~\ref{prop4}.
Hence for $\mu \in X$ we set $T_q(l\mu + (l-1)\rho) = T_\C(\mu)^{[l]} \otimes St_l$.

For general $\lambda \in X$ we set $\mu = \lambda^1 -\rho$ and consider $T = T_\C(\mu)^{[l]} \otimes St_l \otimes L_q(\lambda^0 +\rho)$. Then $T$ is a tilting module and its
highest weight is $\lambda$ occurring with multiplicity $1$. So we set $T_q(\lambda)$ equal to the unique indecomposable summand of $T$ which has a non-zero $\lambda$-weight space.

This gives the existence of $T_q(\lambda)$. The second statement is then seen by standard arguments, see \cite[Theorem 11.2]{Hu}.
\end{proof}

For $N\in\mathcal{O}_q$ we denote by $\mathrm{Tr}_{PI}(N)$ the {\em trace} in $N$ of all projective-injective
modules, that is the sum of the images of all homomorphisms from $M$ to $N$, where $M$ is projective-injective.
Note that for every finite dimensional $V\in \mathcal{O}_q$ the functor $V\otimes{}_-$ preserves the category 
of projective-injective modules. This implies that for any $N\in\mathcal{O}_q$ we have 
$\mathrm{Tr}_{PI}(V\otimes N)\cong V\otimes \mathrm{Tr}_{PI}(N)$.
Titling modules in $\mathcal{O}_q$ can be alternatively  described as follows:

\begin{thm}\label{thm4.4.1}
\begin{enumerate}[$($i$)$]
\item\label{thm4.4.1-1} For every $\lambda\in X$ the module $\mathrm{Tr}_{PI}(P_q(\lambda))$ is
an indecomposable tilting module.
\item\label{thm4.4.1-2} Every indecomposable tilting module is isomorphic to $\mathrm{Tr}_{PI}(P_q(\lambda))$
for some $\lambda\in X$.
\item\label{thm4.4.1-3} \rm{(Ringel self-duality)} For every $\lambda,\mu\in X$ we have 
\begin{displaymath}
\mathrm{Hom}_{\mathcal{O}_q} (P_q(\lambda),P_q(\mu))\cong
\mathrm{Hom}_{\mathcal{O}_q} (\mathrm{Tr}_{PI}(P_q(\lambda)),\mathrm{Tr}_{PI}(P_q(\mu))).
\end{displaymath}
\end{enumerate}
\end{thm}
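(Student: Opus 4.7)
The plan is to reduce everything to the Struktursatz (Theorem~\ref{thm4.3.2}) via the key natural identification
\begin{equation*}
\mathrm{M}_\lambda \;\cong\; \mathrm{Hom}_{\mathcal{O}_q}({}_-, \mathrm{Tr}_{PI}(P_q(\lambda)))|_{\cC^{PI}},
\end{equation*}
which holds because every morphism from a projective-injective module $P$ into $P_q(\lambda)$ automatically factors through the inclusion $\mathrm{Tr}_{PI}(P_q(\lambda)) \hookrightarrow P_q(\lambda)$. I would establish parts (i)--(iii) in a slightly rearranged order: first that $\mathrm{Tr}_{PI}(P_q(\lambda))$ is a tilting module, then (iii), and finally indecomposability together with (ii).

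\textbf{Tilting property.} For $\lambda = l\mu+(l-1)\rho$, this is immediate from the equivalence $\mathrm{G}$ of Theorem~\ref{thm9}, which identifies projective-injectives in $\mathcal{O}_q^{\mathrm{spec}}$ with those in $\mathcal{O}_{\mathrm{int}}$ and matches (dual) Verma filtrations; the classical statement in $\mathcal{O}_{\mathrm{int}}$ then suffices. For general $\lambda$, the proof of Theorem~\ref{Thm12} realizes $P_q(\lambda)$ as a direct summand of $P_q(l\lambda^1+(l-1)\rho)\otimes L_q(\nu)$ for a suitable finite-dimensional $L_q(\nu)$. Since $\mathrm{Tr}_{PI}({}_-)$ commutes with tensoring by finite-dimensional modules, and since the class of tilting modules is closed under such tensoring, $\mathrm{Tr}_{PI}(P_q(\lambda))$ is a direct summand of a tilting module, hence itself tilting.

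\textbf{Ringel self-duality (iii).} Let $\mathrm{res}_{\lambda,\mu}: \mathrm{Hom}_{\mathcal{O}_q}(P_q(\lambda),P_q(\mu))\to \mathrm{Hom}_{\mathcal{O}_q}(\mathrm{Tr}_{PI}(P_q(\lambda)),\mathrm{Tr}_{PI}(P_q(\mu)))$ be the restriction map; it is well-defined because any morphism of projectives sends the PI-trace of the source into that of the target. Let $\mathcal{H}_{\lambda,\mu}$ be the map from the target of $\mathrm{res}_{\lambda,\mu}$ to $\mathrm{Hom}_{\mathrm{mod}\text{-}\cC^{PI}}(\mathrm{M}_\lambda,\mathrm{M}_\mu)$ induced by restriction to $\cC^{PI}$ via the displayed identification. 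A direct unravelling shows $\Phi_{\lambda,\mu} = \mathcal{H}_{\lambda,\mu}\circ\mathrm{res}_{\lambda,\mu}$. Now $\mathcal{H}_{\lambda,\mu}$ is injective: a morphism annihilating all images of projective-injectives must vanish on the sum of these images, which equals $\mathrm{Tr}_{PI}(P_q(\lambda))$ by idempotency of $\mathrm{Tr}_{PI}$. Since $\Phi_{\lambda,\mu}$ is an isomorphism by Theorem~\ref{thm4.3.2}, $\mathrm{res}_{\lambda,\mu}$ is an isomorphism as well, proving (iii).

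\textbf{Indecomposability and (ii).} Specializing (iii) to $\lambda=\mu$ gives $\mathrm{End}_{\mathcal{O}_q}(\mathrm{Tr}_{PI}(P_q(\lambda)))\cong\mathrm{End}_{\mathcal{O}_q}(P_q(\lambda))$, which is local; hence $\mathrm{Tr}_{PI}(P_q(\lambda))$ is indecomposable, completing (i). Define $\sigma: X\to X$ by $\mathrm{Tr}_{PI}(P_q(\lambda))\cong T_q(\sigma(\lambda))$. Injectivity of $\sigma$ follows from (iii) since isomorphic traces yield isomorphic endomorphism rings, hence isomorphic $\mathrm{M}_\lambda$'s, hence isomorphic projectives by the Struktursatz. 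As $\mathrm{Tr}_{PI}(P_q(\lambda))\subseteq P_q(\lambda)$, $\sigma$ preserves blocks of $\mathcal{O}_q$; using the tensor factorization of simples from Theorem~\ref{prop3} together with the finiteness of blocks in $\mathcal{O}_{\mathrm{int}}$ and $\mathcal{F}_q$, each block of $\mathcal{O}_q$ contains only finitely many isomorphism classes of simple modules, hence equally many indecomposable projectives and indecomposable tiltings, so blockwise injectivity of $\sigma$ forces surjectivity, proving (ii). The principal obstacle in this program is the careful verification of the displayed identification of $\mathrm{M}_\lambda$ and of the factorization $\Phi = \mathcal{H}\circ\mathrm{res}$; both come down to Yoneda-style bookkeeping about the behavior of morphisms with respect to PI-traces, and once this is in place the three parts fall out from the Struktursatz almost formally.
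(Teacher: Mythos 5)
Your treatment of the tilting property and of part (iii) is correct and in spirit matches the paper: the paper's one-line deduction of (iii) from the Struktursatz is exactly the factorization $\Phi_{\lambda,\mu}=\mathcal{H}_{\lambda,\mu}\circ\mathrm{res}_{\lambda,\mu}$ that you spell out, together with the injectivity of $\mathcal{H}_{\lambda,\mu}$ which comes down to idempotency of $\mathrm{Tr}_{PI}$. Part (i) from the $\lambda=\mu$ case of (iii) is also exactly as in the paper.

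There is, however, a genuine gap in your proof of (ii). Surjectivity of $\sigma$ is obtained by combining block-wise injectivity with the claim that ``each block of $\mathcal{O}_q$ contains only finitely many isomorphism classes of simple modules,'' deduced from ``finiteness of blocks in $\mathcal{O}_{\mathrm{int}}$ and $\mathcal{F}_q$.'' This is false: the paper notes explicitly, right after displaying \eqref{eq8}, that blocks of $\mathcal{O}_q$ can have infinitely many simples (the $\mathfrak{sl}_2$ computation there exhibits an infinite linkage chain), and blocks of $\mathcal{F}_q$ are likewise infinite since linkage is governed by the affine Weyl group. Hence an injective self-map of a block need not be surjective and the counting argument collapses. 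The repair is the one the paper uses: from the existence proof of Theorem~\ref{Thm18}, every $T_q(\lambda)$ is a direct summand of $T\otimes E$ with $T$ a tilting module in $\mathcal{O}_q^{\mathrm{spec}}$ and $E$ finite dimensional; by the special-block case (via Theorem~\ref{thm9}) one has $T\cong \mathrm{Tr}_{PI}(P)$ for some projective $P\in\mathcal{O}_q^{\mathrm{spec}}$, and since $\mathrm{Tr}_{PI}$ commutes both with tensoring by $E$ and with direct sums, $T\otimes E\cong \mathrm{Tr}_{PI}(P\otimes E)$ is a direct sum of modules of the form $\mathrm{Tr}_{PI}(P_q(\mu))$. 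Krull--Schmidt together with (i) then yields (ii). You already have every ingredient for this; only the finiteness-based counting step needs to be replaced.
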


In the classical case Ringel self-duality is due to Soergel, see \cite{So2}. 

\begin{proof}
This is well-known for $\mathcal{O}_{\mathbb{C}}$, see e.g. \cite{So2,FKM}. Hence
Theorem~\ref{thm9} implies the claim for $\mathcal{O}_q^{\mathrm{spec}}$.
Using translation and Theorem~\ref{Thm13} we obtain that $\mathrm{Tr}_{PI}(P_q(\lambda))$
is a tilting module for every $\lambda\in X$.

For every $\lambda,\mu\in X$ from Theorem~\ref{thm4.3.2} it follows that the restriction map
\begin{displaymath}
\mathrm{Hom}_{\mathcal{O}_q} (P_q(\lambda),P_q(\mu))\rightarrow
\mathrm{Hom}_{\mathcal{O}_q} (\mathrm{Tr}_{PI}(P_q(\lambda)),\mathrm{Tr}_{PI}(P_q(\mu)))
\end{displaymath}
is bijective. This proves \eqref{thm4.4.1-3} and implies that every $\mathrm{Tr}_{PI}(P_q(\lambda))$
is indecomposable, proving \eqref{thm4.4.1-1}. Claim \eqref{thm4.4.1-2} follows from the fact that 
every tilting module occurs as a direct summand of a simple tilting module from $\mathcal{O}_q^{\mathrm{spec}}$
tensored with a finite dimensional module.
\end{proof}

Theorem~\ref{thm4.4.1}\eqref{thm4.4.1-1}
combined with Theorem~\ref{Thm13} implies a tensor product formula for indecomposable tilting modules 
similar to Theorem~\ref{Thm13} and Theorem~\ref{Thm14}. Namely, 
let $\lambda^0 \in X_l$ and write $\tilde \lambda^0 = l\rho + w_0\cdot \lambda^0$. 
 
\begin{cor}\label{cor4.4.2}
For each $\lambda \in X$ we have $T_q(\lambda) \simeq T_\C(\lambda^1-\rho)^{[l]}~\otimes~Q(\tilde \lambda^0)$.
\end{cor}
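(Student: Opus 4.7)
The plan is to combine the Ringel self-duality of Theorem~\ref{thm4.4.1}\eqref{thm4.4.1-1} with the tensor product formula of Theorem~\ref{Thm13}, exactly as hinted in the paragraph preceding the statement. By Theorem~\ref{thm4.4.1}\eqref{thm4.4.1-1} every indecomposable tilting module in $\mathcal{O}_q$ is of the form $\mathrm{Tr}_{PI}(P_q(\mu))$ for a suitable $\mu \in X$. Applying Theorem~\ref{Thm13} to factor $P_q(\mu) \simeq P_\C(\mu^1)^{[l]} \otimes Q_q(\mu^0)$ and using the commutation $\mathrm{Tr}_{PI}(V \otimes N) \simeq V \otimes \mathrm{Tr}_{PI}(N)$ for finite-dimensional $V$ (stated just before Theorem~\ref{thm4.4.1}), one obtains
\begin{displaymath}
\mathrm{Tr}_{PI}(P_q(\mu)) \;\simeq\; Q_q(\mu^0) \otimes \mathrm{Tr}_{PI}\bigl(P_\C(\mu^1)^{[l]}\bigr).
\end{displaymath}

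The key technical step is to compute $\mathrm{Tr}_{PI}(P_\C(\mu^1)^{[l]})$. By Theorem~\ref{prop3}, every composition factor of $P_\C(\mu^1)^{[l]}$ is of the form $L_\C(\sigma)^{[l]} \simeq L_q(l\sigma)$, so only projective-injectives with highest weight in $lX$ can contribute to the trace. Theorems~\ref{Thm13} and~\ref{Thm3.9.1} identify these contributing modules as the $P_q(l\sigma)$ for $\sigma$ classically antidominant; adjointness~\eqref{eq3} then gives $\mathrm{Hom}_{\mathcal{O}_q}(P_q(l\sigma), P_\C(\mu^1)^{[l]}) \simeq \mathrm{Hom}_{\mathcal{O}_{\mathrm{int}}}(P_\C(\sigma), P_\C(\mu^1))$, with the images under the two homomorphism spaces corresponding under the Frobenius twist. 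This reduces the calculation to classical Ringel self-duality in $\mathcal{O}_{\mathrm{int}}$ (the classical analogue of Theorem~\ref{thm4.4.1}\eqref{thm4.4.1-1}), yielding $\mathrm{Tr}_{PI}(P_\C(\mu^1)^{[l]}) \simeq T_\C(\nu)^{[l]}$ for a weight $\nu$ determined by $\mu^1$ via classical Ringel duality.

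Combining, $\mathrm{Tr}_{PI}(P_q(\mu)) \simeq T_\C(\nu)^{[l]} \otimes Q_q(\mu^0)$, which is indecomposable tilting by Theorem~\ref{thm4.4.1}\eqref{thm4.4.1-1}, with highest weight $l\nu + \mathrm{hw}\,Q_q(\mu^0)$. Donkin's formula in $\mathcal{F}_q$, available via the combinatorics of $\mathcal{F}_q$ recalled in the introduction, gives $\mathrm{hw}\,Q_q(\mu^0) = 2(l-1)\rho + w_0\mu^0$ for $\mu^0 \in X_l$. Taking $\nu = \lambda^1 - \rho$ and $\mu^0 = \tilde\lambda^0 = l\rho + w_0\cdot\lambda^0$, so that $\tilde\lambda^0 = (l-2)\rho + w_0\lambda^0$ by $w_0\rho = -\rho$, a direct calculation yields highest weight $l(\lambda^1-\rho) + 2(l-1)\rho + w_0\tilde\lambda^0 = l\lambda^1 + \lambda^0 = \lambda$, so uniqueness of indecomposable tilting modules with a given highest weight gives $T_q(\lambda) \simeq T_\C(\lambda^1-\rho)^{[l]} \otimes Q_q(\tilde\lambda^0)$. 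The principal obstacle is the second step, namely isolating the projective-injectives meeting a Frobenius twist and reducing to classical Ringel self-duality; a subsidiary obstacle is the invocation of Donkin's formula for $\mathrm{hw}\,Q_q(\mu^0)$, which pins down the index $\tilde\lambda^0$.
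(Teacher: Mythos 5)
Your proposal is correct and follows the route the paper indicates (combine Theorem~\ref{thm4.4.1} with Theorem~\ref{Thm13}); the paper itself does not spell out the intermediate steps, and you have filled them in appropriately. Two small remarks. First, the citation at the start should be to part~\eqref{thm4.4.1-2} of Theorem~\ref{thm4.4.1} rather than part~\eqref{thm4.4.1-1}. Second, your ``key technical step'' (identifying $\mathrm{Tr}_{PI}(P_\C(\mu^1)^{[l]})$ with a twisted classical tilting module) is correct as written, but it can be obtained a bit more cleanly: instead of computing $\mathrm{Tr}_{PI}$ of $P_\C(\mu^1)^{[l]}$ directly via the adjunction~\eqref{eq3} and matching images, apply the commutation rule $\mathrm{Tr}_{PI}(St_l\otimes N)\cong St_l\otimes \mathrm{Tr}_{PI}(N)$ with $N=P_\C(\mu^1)^{[l]}$. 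Since the trace of a projective object inside $\mathcal{O}_q$ is supported on a single block, $\mathrm{Tr}_{PI}(P_\C(\mu^1)^{[l]}\otimes St_l)$ can be computed in $\mathcal{O}_q^{\mathrm{spec}}$, where Theorem~\ref{thm9} and classical Ringel self-duality give $\mathrm{G}\bigl(\mathrm{Tr}_{PI}^{\mathcal{O}}(P_\C(\mu^1))\bigr)=T_\C(\nu)^{[l]}\otimes St_l$; since $\mathrm{Tr}_{PI}(P_\C(\mu^1)^{[l]})$ is $u_q$-trivial and $\mathrm{G}$ is an equivalence, one can then cancel $St_l$. This avoids having to verify that images correspond under~\eqref{eq3}. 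Apart from that, your reliance on the Donkin-type formula $\mathrm{hw}\,Q_q(\mu^0)=2(l-1)\rho+w_0\mu^0$ is exactly the extra combinatorial input the statement of the corollary implicitly uses (it is needed to make sense of the index $\tilde\lambda^0$ in the first place), so its appearance in your proof is unavoidable; note also that when some $\lambda^0_i=l-1$ the weight $\tilde\lambda^0$ has a $-1$ entry and falls outside $X_l$, so the identification $\mu^0=\tilde\lambda^0$ needs a suitable convention for $Q_q$ --- but this ambiguity is already present in the paper's own statement and in Corollary~\ref{cor23a} it is sidestepped by a regularity assumption on $\lambda^0$.
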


\section{Characters and Kazhdan-Lusztig data}\label{s6}

\subsection{Character formulas}\label{s6.1}
Consider the group ring $\Z[X]$ in which we denote the basis element corresponding 
to $\lambda \in X$ by $e^\lambda$. The multiplication is 
then determined by $e^\lambda e^\mu = e^{\lambda + \mu}$. 

We extend this ring by defining its ``completion'' $\widehat{\Z[X]}$ 
to consist of all expressions $\sum_{\lambda} c_\lambda e^\lambda$ where $c_\lambda \in \Z$ for all $\lambda$ and
there exist $\lambda_1, \dots , \lambda_r \in X$ such that $c_\lambda = 0$ unless $\lambda \leq \lambda_i$ for some $i$ (here $\leq$ is the usual order on $X$). Alternatively, this is the set of $\Z$-valued functions
on $X$ whose support is contained in a finite union of subsets of the form 
$X_{\leq \mu} = \{\lambda \in X \mid \lambda \leq \mu \}$. Clearly, the multiplication on $\Z[X]$ extends to
$\widehat{\Z[X]}$.

If $f = \sum a_\lambda e^\lambda \in \widehat{\Z[X]}$, we set $f^{[l]} = \sum a_\lambda e^{l\lambda}$.
If $M \in \mathcal O_{\mathrm{int}}$ or $M \in \mathcal O_q$, we set 
$\ch M = \sum_\mu (\dim M_\mu) e^\mu \in \widehat{\Z[X]}$ and call this the character of $M$. Then for $M \in \mathcal O_{\mathrm{int}}$ we get
$\ch (M^{[l]}) = (\ch M)^{[l]}$.

Using the notation from
Section~\ref{s3.4} for $M \in \mathcal O_{\mathrm{int}}$ we have 
\begin{equation}\label{eq4}
\ch M = \sum_\mu [M:L_\C(\mu)] \ch L_\C(\mu) ,
\end{equation} 
and similarly for $M\in \mathcal O_q$ we have
\begin{equation}\label{eq5}
\ch M = \sum_\mu [M:L_q(\mu)] \ch L_q(\mu).
\end{equation}
These sums are finite, cf. Corollary~\ref{prop6}.
If we take $M = \Delta_\C(\lambda)$, then the sum in \eqref{eq4} has a unique highest term, namely $ 1 \cdot \ch L_\C(\lambda)$. We can therefore ``invert'' these equations and
obtain
\begin{equation}\label{eq6}
\ch L_\C(\lambda) = \sum_\mu p^\C_{\mu, \lambda}  \ch \Delta_\C(\mu) 
\end{equation}
for some unique $p^\C_{\mu, \lambda} \in \Z$. Similarly, we get 
 \begin{equation}\label{eq7}
\ch L_q(\lambda) = \sum_\mu p^q_{\mu, \lambda}  \ch \Delta_q(\mu) 
\end{equation}
for some unique $p^q_{\mu, \lambda} \in \Z$. 

Note that whereas the sum in \eqref{eq6} is finite for all $\lambda \in X$ (we have 
$p^\C_{\mu, \lambda} = 0$ unless $\mu \in W\cdot \lambda$), this is not so in \eqref{eq7} (as blocks of 
$\mathcal O_q$ could have infinitely many simples). 
For instance in the $\mathfrak{sl}_2$-case we have
\begin{equation}\label{eq8}
\ch L_q(-2) = \Delta_q(-2) + \sum_{m \leq -1} (\ch \Delta_q(2ml) - \ch \Delta_q(2ml -2)). 
\end{equation}
Similarly, we may consider the characters of (finite dimensional) $u_qB_q$-modules. Here we obtain the analogous formulas
\begin{equation}\label{eq9}
\ch M = \sum_\mu [M:\tilde L_q(\mu)] \ch \tilde L_q(\mu).
\end{equation}
and
 \begin{equation}\label{eq10}
\ch \tilde L_q(\lambda) = \sum_\mu \tilde p^q_{\mu, \lambda}  \ch \tilde \Delta_q(\mu) 
\end{equation}
for some unique $\tilde p^q_{\mu, \lambda} \in \Z$.
Again, \eqref{eq9} clearly involves only finite sums for any finite dimensional $M$ (and is in fact a formula in $\Z[X]$) whereas the sum in \eqref{eq10} may well be infinite.

Finally, we observe the following obvious identities
\begin{equation}\label{eq12}
(\ch \Delta_C(\lambda)) e^\mu = \ch \Delta_\C(\lambda + \mu);
\end{equation}
\begin{equation}\label{eq13}
(\ch \Delta_q(\lambda)) e^\mu = \ch \Delta_q(\lambda + \mu);
\end{equation}
and
\begin{equation}\label{eq14}
(\ch \tilde \Delta_q(\lambda)) e^\mu = \ch \tilde \Delta_q(\lambda + \mu)
\end{equation}
valid for all $\lambda, \mu \in X$.

\subsection{Characters of simple modules in $\mathcal O_q$}

Using the terminology from Section~\ref{s6.1} we have:

\begin{thm}\label{thm22}
For all $\lambda \in X$ we have the following:
\begin{enumerate}[$($i$)$] 
\item\label{thm22.1}  $\ch L_q(\lambda) = \sum_{\nu, \eta} p^\C_{\nu, \lambda^1} \tilde p^q_{\eta, \lambda^0} \ch \Delta_q(l\nu + \eta)$;
\item\label{thm22.2} $p^q_{\mu, \lambda} = \sum_{l\nu + \eta = \mu}  p^\C_{\nu, \lambda^1}\tilde p^q_{\eta, \lambda^0} =
 \sum_{w \in W^{\lambda^1}} p^\C_{w\cdot \lambda^1, \lambda^1}\tilde p^q_{\mu - lw\cdot \lambda^1, \lambda^0}$, where
 $W^{\lambda^1}$ denotes the set of shortest coset representatives in $W/\mathrm{Stab}_{W\cdot}(\lambda^1)$.
\end{enumerate}
\end{thm}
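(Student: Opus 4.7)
The plan is to apply characters to the tensor factorization $L_q(\lambda) \simeq L_\C(\lambda^1)^{[l]} \otimes L_q(\lambda^0)$ provided by Theorem~\ref{prop3}, expand each factor using the inverted Verma expansions \eqref{eq6} and \eqref{eq10}, and then collapse the resulting double sum via a quantum Frobenius splitting of Verma characters.

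First I would take characters in Theorem~\ref{prop3} to obtain
\begin{equation*}
\ch L_q(\lambda) = \bigl(\ch L_\C(\lambda^1)\bigr)^{[l]} \cdot \ch L_q(\lambda^0).
\end{equation*}
Since $\lambda^0\in X_l$ (so its own $l$-adic upper part vanishes), Remark~\ref{rem2} together with \eqref{eq2} gives $\ch L_q(\lambda^0) = \ch\tilde L_q(\lambda^0)$ in $\Z[X]$. Substituting \eqref{eq6} and \eqref{eq10} then yields
\begin{equation*}
\ch L_q(\lambda) = \sum_{\nu,\eta} p^\C_{\nu,\lambda^1}\,\tilde p^q_{\eta,\lambda^0}\,\bigl(\ch \Delta_\C(\nu)\bigr)^{[l]} \cdot \ch\tilde\Delta_q(\eta).
\end{equation*}

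The decisive computation is the character identity
\begin{equation*}
\bigl(\ch \Delta_\C(\nu)\bigr)^{[l]} \cdot \ch\tilde\Delta_q(\eta) = \ch\Delta_q(l\nu + \eta),
\end{equation*}
which, after factoring out highest weights via \eqref{eq12}, \eqref{eq13} and \eqref{eq14}, reduces to the purely combinatorial equality $\ch U_q^- = \ch u_q^-\cdot \bigl(\ch U_\C^-\bigr)^{[l]}$. This is the weight-space form of the quantum Frobenius splitting of $U_q^-$, and follows from the Lusztig PBW basis recalled in Section~\ref{s2.2}: every ordered PBW monomial $\prod_\alpha F_\alpha^{(n_\alpha)}$ admits a unique $l$-adic decomposition $n_\alpha = n_\alpha^0 + l n_\alpha^1$ with $0\le n_\alpha^0<l$, and this furnishes a weight-preserving bijection between the PBW basis of $U_q^-$ and the product of the PBW basis of $u_q^-$ (from the $n_\alpha^0$'s) with the PBW basis of $U_\C^-$ (from the $n_\alpha^1$'s, identified through $Fr_q$). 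This is the step where I expect the most care to be required; the point is to match graded characters weight by weight, not merely total dimensions.

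Substituting this identity into the previous display proves (i). For (ii) I would match coefficients against the defining equation \eqref{eq7}. The Verma characters $\{\ch\Delta_q(\mu)\}_{\mu\in X}$ are linearly independent in $\widehat{\Z[X]}$ because each has $e^\mu$ as a strict maximum in its support, so reading off the coefficient of $\ch\Delta_q(\mu)$ gives $p^q_{\mu,\lambda} = \sum_{l\nu+\eta=\mu} p^\C_{\nu,\lambda^1}\tilde p^q_{\eta,\lambda^0}$. The second equality is immediate from the linkage principle for $\mathcal{O}_{\mathrm{int}}$, which forces $p^\C_{\nu,\lambda^1} = 0$ unless $\nu\in W\cdot\lambda^1$, so the sum over $\nu$ collapses to a sum over $w\in W$.
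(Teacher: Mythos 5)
Your argument follows the paper's own proof in outline: take characters in Theorem~\ref{prop3}, expand via \eqref{eq6} and \eqref{eq10}, reduce everything to the single character identity $(\ch \Delta_\C(\nu))^{[l]}\,\ch \tilde\Delta_q(\eta) = \ch \Delta_q(l\nu+\eta)$ (this is exactly the paper's \eqref{eq14a}), and then read off part (ii) from (i) together with the observation that $p^\C_{\nu,\lambda^1}=0$ unless $\nu\in W\cdot\lambda^1$. The one place you diverge is in how the key identity is established. The paper deduces \eqref{eq14a} from already-established module-level statements: it writes $\ch\tilde\Delta_q(\eta)=(\ch St_l)\,e^{\eta-(l-1)\rho}$ using $St_l=\tilde\Delta_q((l-1)\rho)$ and \eqref{eq14}, then invokes the tensor factorization $\Delta_q(l\nu+(l-1)\rho)\simeq\Delta_\C(\nu)^{[l]}\otimes St_l$ of Proposition~\ref{prop4} and finishes with \eqref{eq13}. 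You instead prove the identity directly at the level of characters by factoring off highest weights and showing $\ch U_q^- = \ch u_q^-\cdot(\ch U_\C^-)^{[l]}$ via the $l$-adic decomposition of PBW monomials; equivalently, $\prod_{\alpha>0}(1-e^{-\alpha})^{-1} = \prod_{\alpha>0}(1+e^{-\alpha}+\cdots+e^{-(l-1)\alpha})\cdot\prod_{\alpha>0}(1-e^{-l\alpha})^{-1}$. Both routes ultimately rest on the same Lusztig PBW structure (Proposition~\ref{prop4} is itself proved via Lemma~\ref{lemma3b}, which encodes the same splitting at the module level), so the content is identical; your version is simply more self-contained at the character level, while the paper's reuses its earlier module-theoretic work. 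Your addition of the linear-independence remark for $\{\ch\Delta_q(\mu)\}$ in $\widehat{\Z[X]}$ is a useful explicit justification of the coefficient-matching in part (ii), which the paper leaves implicit.
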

 
 \begin{proof}
 By Proposition~\ref{prop3} combined with \eqref{eq6} and \eqref{eq10} we find
 \begin{multline*}
 \ch L_q(\lambda) = (\ch L_\C(\lambda^1)^{[l]} \ch L_q(\lambda^0) =\\= \sum_\nu p^\C_{\nu, \lambda^1} (\ch \Delta_\C(\nu))^{[l]} \sum_\eta \tilde p^q_{\eta, \lambda^0}\ch  \tilde
 \Delta_q(\eta) = \\ 
 = \sum_\mu \left(\sum_{l\nu + \eta = \mu} p^\C_{\nu, \lambda^1} \tilde p^q_{\eta, \lambda^0}\right) (\ch \Delta_\C(\nu))^{[l]} \ch \tilde \Delta_q(\eta).
 \end{multline*} 
 So, to establish \eqref{thm22.1} we should only check that 
 \begin{equation}\label{eq14a}
 (\ch \Delta_\C(\nu))^{[l]} \ch \tilde \Delta_q(\eta) = \ch \Delta_q(l \nu + \eta).
 \end{equation}
 However, by \eqref{eq14} we have  $\ch \tilde \Delta_q(\eta) = 
 (\ch St_l) e^{\eta-(l-1)\rho}$ (because $St_l = \tilde \Delta_q((l-1)\rho)$). 
 Hence using Proposition~\ref{prop4} we find
 \begin{multline*}
 (\ch \Delta_\C(\nu))^{[l]} \ch \tilde \Delta_q(\eta) = 
 (\ch \Delta_\C(\nu))^{[l]} (\ch St_l) e^{\eta - (l-1)\rho} =\\
 = (\ch \Delta_q(l \nu + (l-1)\rho)) e^{\eta - (l-1)\rho} =
 \ch \Delta_q(l\nu + \eta). 
 \end{multline*}
Here we have used \eqref{eq13} for the last equality.
 
 The first equality in (ii) is immediate from (i) and the second comes from the fact that $p^\C_{\nu, \lambda^1} = 0$ unless $\nu \in W \cdot \lambda^1$.
 \end{proof}
 
 \subsection{Characters of indecomposable tilting modules in $\mathcal O_q$}
 By Corollary~\ref{cor4.4.2} we get for any $\lambda \in X$
 \[
 \ch T_q(\lambda) = \sum_{\nu, \eta} (T_\C(\lambda^1-\rho): \Delta_\C(\nu))(Q_q(\tilde \lambda^0): \tilde \Delta_q(\eta)) \ch (\Delta_\C(\nu)^{[l]}) \ch \tilde \Delta_q(\eta).
 \]
 Applying \eqref{eq14a} in this formula we get
 
 \begin{thm}\label{thm22a}
 For all $\lambda, \mu \in X$ we have
 $$ (T_q(\lambda):\Delta_q(\mu)) = \sum_{\nu, \eta, l\nu + \eta = \mu} (T_\C(\lambda^1-\rho):\Delta_C(\nu))(Q_q(\tilde \lambda^0):\tilde \Delta_q(\eta)).$$
 \end{thm}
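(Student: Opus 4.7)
The strategy is to combine the tensor factorization from Corollary~\ref{cor4.4.2} with the character-level identity \eqref{eq14a} that was already established in the proof of Theorem~\ref{thm22}, and then to extract Verma multiplicities by linear independence of Verma characters. Concretely, starting from $T_q(\lambda) \simeq T_\C(\lambda^1-\rho)^{[l]} \otimes Q_q(\tilde\lambda^0)$, I would take characters factor by factor: the tilting module $T_\C(\lambda^1-\rho)$ has a Verma filtration in $\mathcal O_{\mathrm{int}}$, giving
$$\ch T_\C(\lambda^1-\rho) = \sum_\nu (T_\C(\lambda^1-\rho):\Delta_\C(\nu))\,\ch \Delta_\C(\nu),$$
and the $U_q$-projective $Q_q(\tilde\lambda^0)$, when viewed as a $u_qB_q$-module via restriction, carries a baby Verma filtration by Remark~\ref{rem2} together with \cite[Theorem~4.6]{APW92}, giving
$$\ch Q_q(\tilde\lambda^0) = \sum_\eta (Q_q(\tilde\lambda^0):\tilde\Delta_q(\eta))\,\ch \tilde\Delta_q(\eta).$$
Multiplying these expansions and twisting the first by $f \mapsto f^{[l]}$ yields the character identity displayed immediately before the theorem statement.

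Next I would apply identity \eqref{eq14a}, namely $(\ch \Delta_\C(\nu))^{[l]} \ch \tilde\Delta_q(\eta) = \ch \Delta_q(l\nu+\eta)$, to each summand and regroup according to the value $\mu = l\nu + \eta$. This produces
$$\ch T_q(\lambda) = \sum_\mu \Bigl(\sum_{l\nu+\eta=\mu} (T_\C(\lambda^1-\rho):\Delta_\C(\nu))(Q_q(\tilde\lambda^0):\tilde\Delta_q(\eta))\Bigr)\,\ch \Delta_q(\mu).$$
Since $T_q(\lambda)$ is tilting by Theorem~\ref{Thm18}, it has a Verma filtration, so also $\ch T_q(\lambda) = \sum_\mu (T_q(\lambda):\Delta_q(\mu))\,\ch \Delta_q(\mu)$. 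The characters $\{\ch \Delta_q(\mu)\}_{\mu \in X}$ are linearly independent in $\widehat{\Z[X]}$ because each $\ch \Delta_q(\mu)$ has $\mu$ as its unique maximal weight; comparing coefficients term by term gives the desired formula.

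The one step that requires genuine care, rather than being a bookkeeping exercise, is the justification of the baby Verma filtration used for $Q_q(\tilde\lambda^0)$. A priori $Q_q(\tilde\lambda^0)$ is only declared to be the projective cover of $L_q(\tilde\lambda^0)$ in the finite dimensional category $\mathcal F_q$; one must invoke \cite[Theorem~4.6]{APW92} to conclude that its restriction to $u_qB_q$ remains projective, whence it acquires a baby Verma filtration whose multiplicities are precisely the numbers $(Q_q(\tilde\lambda^0):\tilde\Delta_q(\eta))$ appearing in the statement. Once this is in place, the rest of the argument is a direct manipulation of characters using \eqref{eq14a} and linear independence, and no finer information about the multiplicities themselves is needed.
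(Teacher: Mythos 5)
Your proposal is correct and follows essentially the same route as the paper: take characters of the tensor factorization from Corollary~\ref{cor4.4.2}, expand each factor into Verma (resp.\ baby Verma) characters, apply \eqref{eq14a}, regroup by $\mu = l\nu + \eta$, and read off coefficients using linear independence of the $\ch\Delta_q(\mu)$. Your extra care in flagging the baby Verma filtration for $Q_q(\tilde\lambda^0)$ via \cite[Theorem~4.6]{APW92} is a step the paper uses implicitly (it is the same ingredient cited in Remark~\ref{rem2} and in the proof of Theorem~\ref{Thm13}), so this is a welcome addition, not a deviation.
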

 
 \subsection{Kazhdan-Lusztig theory for $\mathcal O_q$}
 
 Fix an antidominant weight $\lambda \in X$. For each $\mu \in W\cdot \lambda$ we pick $w \in W$ minimal such that $w\cdot \lambda = \mu$. Then the Kazhdan-Lusztig conjecture
 \cite{KL} proved independently by Beilinson and Bernstein \cite{BB}, and by Brylinski and Kashiwara \cite{BK} says (for each such minimal $y, w \in W$)
 \begin{equation}\label{eq15}
p^\C_{y\cdot \lambda, w \cdot \lambda} = (-1)^{l(yw)} P_{y, w} (1).
\end{equation}
Here $P_{y, w}$ is the Kazhdan-Lusztig polynomial associated to $y, w$, see \cite{KL}.

As discussed in the introduction the analogous Lusztig conjecture for finite dimensional simple modules in $\mathcal F_q$ has also been settled.

Let $W_l$ be the affine Weyl group (of dual Langlands type).
Set 
$$A^-_l = \{\lambda \in X \mid -l < \langle \lambda + \rho, \alpha^{\vee}\rangle < 0 
\text { for all positive roots } \alpha \}.$$ 
This is the top antidominant alcove. Fix
$\lambda \in \bar A^-_l$ and choose for each $\mu \in W_l \cdot \lambda$ a minimal $x \in W_l$ such that $\mu = x\cdot \lambda$. Then in analogy with
\eqref{eq15} for all such minimal $z, x \in W_l$ for which $z\cdot \lambda,x\cdot \lambda \in {X^+}$ we have
\begin{equation}\label{eq16}
p^q_{z\cdot \lambda, x \cdot \lambda} = (-1)^{l(zx)} P_{z, x} (1).
\end{equation}
Here $P_{z,x}$ is again the Kazhdan-Lusztig polynomial associated to the pair $(z,x)$ in the affine Weyl group
$W_l$. Note that this gives us only some of the coefficients, the remaining coefficients 
(for $x\cdot \lambda\in {X^+}$ fixed) can be obtained using
the Weyl character formula for quantum Weyl modules.

Combining the above two formulas we get:

\begin{cor}\label{cor23}
Let $\lambda \in X$ and suppose $w\in W$, resp. $x\in W_l$ is minimal such that $w^{-1} \cdot \lambda^1$ is antidominant, resp. $x^{-1} \cdot \lambda^0 \in 
\bar A^-_l$. Then the character of $L_q(\lambda)$ equals
\begin{multline*}
\sum_{r\in W}\,\,\,\sum_{y\in W^{w^{-1} \cdot \lambda^1}}\,\,\,
\sum_{\stackrel{z\in W_l^{x^{-1}\cdot \lambda^0}}{zx^{-1}\cdot \lambda^0\in{X^+}}} 
(-1)^{l(yw)+l(zx)+l(r)}P_{y,w}(1) P_{z,x}(1)\cdot\\\cdot 
\ch \Delta_{\mathbb{C}}(yw^{-1}\cdot \lambda^1)^{[l]}\ch \Delta_q\big(rzx^{-1}\cdot \lambda^0) 
\end{multline*}
\end{cor}

Similarly, the result in Theorem~\ref{thm22a} leads to the following expression for the characters of indecomposable tilting modules in $\mathcal O_q$. The formula involves the
``inverse'' Kazhdan-Lusztig polynomials $Q_{x, y}$, i.e. the polynomials determined by the equations
\begin{displaymath}
\sum_z(-1)^{l(z) - l(y)} P_{y,z} Q_{z,w} = \delta_{y,w}.
\end{displaymath}
\begin{cor}\label{cor23a}
Let $\lambda \in X$. We assume that $\lambda^1-\rho$ is regular i.e. belongs to the interior of a chamber so that there is a unique $w\in W$ with 
$w^{-1} \cdot (\lambda^1-\rho)$ antidominant. Likewise we assume that $\lambda^0$ is $l$-regular so that there is a unique 
$x \in W_l$ with $x^{-1} \cdot \lambda^0 \in A^-_l$. Then
\[
(T_q(\lambda): \Delta_q(\mu)) = \sum_{y,z} P_{y,w}(1) Q_{z,x}(1)
\]
where the sum runs over those $y \in W$, $z \in W_l$ for which $\mu = lyw^{-1}\lambda^1 + zx^{-1}\cdot \lambda^0$.
\end{cor}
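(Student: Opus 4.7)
The plan is to start from the tensor product decomposition in Theorem~\ref{thm22a},
\begin{displaymath}
(T_q(\lambda):\Delta_q(\mu)) = \sum_{l\nu + \eta = \mu}(T_\C(\lambda^1 - \rho):\Delta_\C(\nu))\,(Q_q(\tilde\lambda^0):\tilde\Delta_q(\eta)),
\end{displaymath}
and to substitute in the known Kazhdan--Lusztig character formulas for each of the two factors appearing on the right-hand side. The first factor is controlled by the classical KL theory for $\mathcal{O}_{\mathrm{int}}$; the second factor is controlled by the quantum KL theory for $\mathcal{F}_q$ (equivalently for $u_qB_q$-modules, via Remark~\ref{rem2}).

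For the classical factor, I would invoke Soergel's character formula for indecomposable tilting modules in $\mathcal{O}_{\mathrm{int}}$, which is a consequence of the classical Kazhdan--Lusztig conjecture \eqref{eq15} via Ringel/Koszul duality. The regularity hypothesis on $\lambda^1-\rho$ singles out the unique minimal $w \in W$ with $\mu_0 := w^{-1}\cdot(\lambda^1-\rho)$ antidominant regular, and Soergel's theorem then yields
\begin{displaymath}
(T_\C(w \cdot \mu_0):\Delta_\C(y \cdot \mu_0)) = P_{y,w}(1),\qquad y\in W,
\end{displaymath}
with multiplicity zero outside $W\cdot(\lambda^1-\rho)$. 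This identifies $\nu = y\cdot\mu_0 = yw^{-1}\cdot(\lambda^1-\rho)$.

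For the quantum factor, my plan is to invert the expansion \eqref{eq16} of $\ch\tilde{L}_q$ using the defining relation of the inverse Kazhdan--Lusztig polynomials $Q_{z,x}$. This is a routine matrix-inversion argument, parallel to the classical derivation of Verma composition multiplicities from simple character expansions. The resulting identity for composition factor multiplicities of $\tilde\Delta_q$, combined with BGG reciprocity in $\mathcal{F}_q$ (via Remark~\ref{rem2} and \cite[Theorem~4.6]{APW92}), yields
\begin{displaymath}
(Q_q(\tilde\lambda^0):\tilde\Delta_q(\eta)) = Q_{z,x}(1)
\end{displaymath}
precisely when $\eta = zx^{-1}\cdot\lambda^0$, with $x\in W_l$ the unique minimal element satisfying $x^{-1}\cdot\lambda^0 \in A_l^+$. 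Substituting both expressions back into Theorem~\ref{thm22a} and rewriting the constraint $l\nu+\eta = \mu$ in terms of the new indexing $(y,z)\in W\times W_l$ then gives the stated formula.

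The main obstacle lies in the parametrization tracking for the quantum step. The weight $\tilde\lambda^0 = l\rho + w_0\cdot\lambda^0$ and the use of the top dominant alcove $A_l^+$ in the statement contrast with the antidominant alcove $\bar{A}_l^-$ appearing in \eqref{eq16}; verifying that the matrix inversion, together with the combined action of $w_0$ and the translation by $l\rho$ on $W_l$, produces the index pair $(z,x)$ on the inverse Kazhdan--Lusztig polynomial (rather than its transpose $(x,z)$) requires careful bookkeeping. Once this parametric correspondence is pinned down and one checks that the $l$-regularity hypothesis ensures that every $\eta$ in the block of $\tilde\lambda^0$ corresponds to a unique $z$, the formula follows by direct substitution.
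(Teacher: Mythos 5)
Your overall structure matches the paper's: start from Theorem~\ref{thm22a} and identify each of the two factors $(T_\C(\lambda^1-\rho):\Delta_\C(\nu))$ and $(Q_q(\tilde\lambda^0):\tilde\Delta_q(\eta))$ with values of Kazhdan--Lusztig, respectively inverse Kazhdan--Lusztig, polynomials. For the classical tilting factor you do essentially what the paper does: invoke Soergel's character formula for indecomposable tilting modules in $\mathcal{O}_{\mathrm{int}}$.

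Where you diverge is the quantum factor. The paper's proof is a one-line citation: Soergel's Conjecture~7.1 in \cite{So1}, proved in \cite{So2}, gives $(Q_q(\tilde\lambda^0):\tilde\Delta_q(zx^{-1}\cdot\lambda^0))=Q_{z,x}(1)$ directly, with the parametrization --- the $w_0$-twist hidden in $\tilde\lambda^0 = l\rho + w_0\cdot\lambda^0$, the passage from the antidominant alcove $\bar A^-_l$ in \eqref{eq16} to the dominant alcove $A^+_l$, and the fact that one lands on $Q_{z,x}$ rather than $Q_{x,z}$ --- already built in. You instead propose to rederive this by inverting \eqref{eq16} and applying BGG reciprocity in $\mathcal{F}_q$ via Remark~\ref{rem2}. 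That route is not unreasonable in outline, and is roughly how one proves Soergel's formula in the first place, but it has a concrete gap: \eqref{eq16} is only stated for $x\cdot\lambda\in\overline{X^+}$, so the matrix you want to invert is only known on a restricted index set, and after inverting you must still identify $[\tilde\Delta_q(\eta):\tilde L_q(\tilde\lambda^0)]$ for the shifted weight $\tilde\lambda^0$, not $\lambda^0$. The translation between those two parametrizations is precisely the ``careful bookkeeping'' you acknowledge but do not carry out, and it is not a mechanical step --- it is the content of Soergel's theorem. As it stands, the proposal proves the corollary only modulo that unfinished step, whereas the paper sidesteps it entirely by citing the result in the form it is needed.
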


\begin{proof}
According to \cite[Conjecture 7.1]{So1} (proved in \cite{So2}) we have $(T_\C(\lambda^1 -\rho): \Delta_\C(yw^{-1} \cdot(\lambda^1-\rho)) = P_{y,w}(1)$ and 
$(Q_q(\tilde \lambda^0):\tilde \Delta_q(zx^{-1} \cdot \lambda^0 +l\rho) = Q_{z,x}(1)$.
\end{proof}

\begin{remark}
Using \cite[Remark 7.2.2]{So1} it is possible to generalize this corollary to include weights $\lambda$ without the stated regularity assumptions.
\end{remark}

\begin{exm}
Consider the simplest possible case where the Lie algebra  $\mathfrak g$ is  $\mathfrak{sl}_2$. Then $X = \Z$ with $X^+ = \Z_{\geq 0}$. Suppose $q$ is a complex root
of unity of odd order $l>1$. Then we have the following description of the Verma modules and the indecomposable tilting modules in $\mathcal O_q$:

\begin{enumerate}
\item [Case 1.] Suppose $\lambda \in \Z$ is $l$-singular, i.e. $\lambda \equiv -1$ (mod $l$). Then we are in the special block of $\mathcal O_q$ and 
just as in the classical case we have (cf. Theorem 3.11  $\Delta_q(\lambda) = T_q(\lambda) = L_q(\lambda)$  when $\lambda < 0$. On the other hand, for $\lambda \geq 0$ we have 
the following two short exact sequences
\[
 0 \to L_q(-\lambda -2) \to \Delta_q(\lambda) \to L_q(\lambda) \to 0,
\]
and
\[
0 \to \Delta_q(\lambda) \to T_q(\lambda) \to \Delta_q(-\lambda -2) \to 0.
\]

\item[Case 2.] Suppose $\lambda$ is $l$-regular and write $\lambda = \lambda^0 + l \lambda^1$ with $0 \leq \lambda^0 < l-1$. If $\lambda <0$ we have 
the following two exact sequences
\[
0 \to L_q(-\lambda^0 -2 +l\lambda^1) \to \Delta_q(\lambda) \to L_q(\lambda) \to 0,
\]
and
\[
0 \to \Delta_q(\lambda) \to T_q(\lambda) \to \Delta_q(-\lambda^0 -2 + l \lambda^1) \to 0.
\] 
On the other hand, if $\lambda \geq 0$ then  $
\Delta_q(\lambda)$ has the following four composition factors: $L_q(\lambda), L_q(-\lambda^0 -2 + l\lambda^1), L_q(-\lambda -2)$, 
and $L_q(\lambda^0 +l(-\lambda^1-2))$. Note that if
$\lambda^1 = 0$ the second and the third of these factors coincide and $\Delta_q(\lambda)$ has in this case only three composition factors.
The tilting module $T_q(\lambda)$ similarly has the following four Verma filtration factors: $\Delta_q(\lambda), \Delta_q(-\lambda^0-2 + l \lambda^1), \Delta_q(-\lambda
-2)$, and $\Delta_q(\lambda^0 - l\lambda^1)$. If $\lambda^1 = 0$ then the first and the last factors are identical and so are the second and third, so that in this case 
there are only two factors.
\end{enumerate}

\end{exm}

Although this example is maybe too simple to catch the full flavor of the behavior of Verma modules and tilting modules it does illustrate some important features: In the most
singular case (the special block) the modules are as in the classical case (with the Kazhdan-Lusztig polynomials for the finite ordinary Weyl group governing the combinatorics). At
the other extreme when the weight $\lambda$ in question is $l$-regular and far from the walls of the Weyl chamber $C$ containing it the Verma module $\Delta_q(\lambda)$, respectively the tilting module $T_q(\lambda)$
has composition factors, respectively Verma filtration factors, belonging to "clusters" in all the Weyl chambers below $C$ (in the Bruhat order).
These clusters look like the alcove patterns in \cite{Lu80}. When $\lambda$ is close to some walls there are (rather complicated) cancellations among these clusters. Moreover,
there are several degrees of $l$-singularity (depending on the facette for the affine Weyl group to which $\lambda$ belongs) which also influence the patterns. This is all
encoded in Corollaries \ref{cor23} and \ref{cor23a}.

\section{The generic case}\label{s5}
 
\subsection{The category $\mathcal O_v$}
 
We define the category $\mathcal O_v$ to be the full subcategory of the category of $U_v$-modules consisting of 
those modules which satisfy the analogues of 
\eqref{cond1}--\eqref{cond3} in Section \ref{s3.1}.
 
Among the objects in $\mathcal O_v$ we have the generic Verma modules $\Delta_v(\lambda)$, $\lambda \in X$, defined 
in the usual way. They have unique simple quotients $L_v(\lambda)$ and these are up to isomorphism a complete set of simple modules in $\mathcal O_v$.
 
 The category $\mathcal O_v$ has properties completely analogous to $\mathcal O_{int}$, see e.g. \cite[Chapters 9-10]{C-P}. In particular all modules in $\mathcal O_v$ have
 finite length, the Verma module $\Delta_v(\lambda)$ has composition factors $L_v(\mu)$ with $\mu \in W \cdot \lambda$, and in fact $\mathcal O_v$ splits into blocks
 \begin{displaymath}
 \mathcal O_v = \oplus_{\lambda} \mathcal O_v^\lambda
 \end{displaymath}
 where the block $\mathcal O_v^\lambda$ consists of those modules from $\mathcal O_v$ whose composition factors have highest weights in $W \cdot \lambda$, and where 
 the sum runs over the set of all $\lambda$ for which $\lambda + \rho$ are dominant.
 
In analogy to $\mathcal O_q$ we have a duality ${{}_-}^\star$ on $\mathcal O_v$ which fixes simple modules. The dual Verma module $\Delta_v^\star(\lambda)$ is denoted
 $\nabla_v(\lambda)$.
 
\subsection{$A$-lattices}
 
Clearly the Verma module $\Delta_v(\lambda)$ has an $A$-lattice, namely the Verma module for $U_A$ defined by 
 \begin{displaymath}
 \Delta_A(\lambda) = U_A \otimes _{B_A} A_\lambda.
 \end{displaymath}
 Here $B_A$ is the Borel subalgebra of $U_A$ defined in analogy with $B_q$ and $A_\lambda$ denotes the free rank one $A$-module with $B_A$-action given by
 the analogue over $A$ of the character $\chi_\lambda$ from Section \ref{s2.4}.
 
 Similarly, $\nabla_v(\lambda)$ has an $A$-lattice $\nabla_A(\lambda)$ defined as the $A$-dual of $\Delta_A(\lambda)$ (with the appropriate $U_A$-structure).
 
 Note that $\Hom_{U_A}(\Delta_A (\lambda), \nabla_A(\lambda)) \simeq A$. We let $c_\lambda$ denote a generator of this module and set $K_A(\lambda)$, respectively $L_A(\lambda)$, respectively $C_A(\lambda)$, 
 equal to the kernel, respectively the image, respectively the cokernel, of $c_\lambda$. Then we get the following two short exact sequences in $\mathcal O_v$:
 \[
 0 \to K_A(\lambda) \to \Delta_A(\lambda) \to L_A(\lambda) \to  0,
 \]
 and
 \[
 0 \to L_A(\lambda) \to \nabla_A(\lambda) \to C_A(\lambda) \to  0.
 \]
 Tensoring by the fraction field $\Q(v)$ of $A$ we see that $L_A(\lambda) \otimes _A \Q(v) \simeq L_v(\lambda)$ because $L_v(\lambda)$ is the image of
 $c_\lambda \otimes 1 : \Delta_v(\lambda) \to \nabla_v(\lambda)$. On the other hand, if we specialize to a root of unity $q \in \C$ (i.e. apply $- \otimes_A \C_q$ with 
 $\C_q $ denoting $\C$ made into an $A$-module by mapping $v$ to $q$) then we obtain the following two exact sequences in $\mathcal O_v$
 \[
  K_A(\lambda)\otimes _A \C_q \to \Delta_q(\lambda) \to L_A(\lambda) \otimes _A \C_q \to  0,
 \]
 and
 \[
 0 \to \Tor_1^A(C_A(\lambda), \C_q) \to L_A(\lambda) \otimes _A \C_q \to \nabla_q(\lambda) \to C_A(\lambda)\otimes_A \C_q \to  0.
 \]  
 As $L_A(\lambda) \otimes _A \C_q$ is a non-zero  quotient of $\Delta_q(\lambda)$ it has $L_q(\lambda)$ as a quotient but it may be bigger. 
 
 \begin{prop}\label{prop19}
 Let $\lambda, \mu \in X$ be fixed. Then 
 \begin{displaymath}
 \dim_\C L_q(\lambda)_\mu \leq \dim_{\Q(v)} L_v(\lambda)_\mu
 \end{displaymath}
 for all $l$. Equality holds if $l \gg 0$.
 \end{prop}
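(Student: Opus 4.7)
The plan is to realize both $\dim_\C L_q(\lambda)_\mu$ and $\dim_{\Q(v)}L_v(\lambda)_\mu$ as ranks of one and the same square matrix with coefficients in $A$, specialized respectively at $v\mapsto q$ and at the generic point of $\mathrm{Spec}\,A$. The inequality will then be upper semi-continuity of matrix rank, and the equality for $l\gg 0$ will follow because any nonzero element of $A=\Z[v,v^{-1}]$ has only finitely many roots of unity among its zeros, of bounded order.

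First I restrict the intertwiner $c_\lambda:\Delta_A(\lambda)\to\nabla_A(\lambda)$ to the $\mu$-weight space. By PBW, $\Delta_A(\lambda)_\mu$ is a free $A$-module of some finite rank $N$ independent of $l$, and $\nabla_A(\lambda)_\mu$ --- being the $A$-dual of $\Delta_A(\lambda)_\mu$ --- is also free of rank $N$. Fixing bases, $c_\lambda|_\mu$ is represented by a square matrix $M(v)\in\mathrm{Mat}_N(A)$. Base changing along $A\hookrightarrow\Q(v)$, respectively along $v\mapsto q$, produces the $\mu$-restrictions of the generic map $\Delta_v(\lambda)\to\nabla_v(\lambda)$ and of the specialized map $\Delta_q(\lambda)\to\nabla_q(\lambda)$, with matrices $M(v)$ and $M(q)\in\mathrm{Mat}_N(\C)$.

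Next I identify the images of these two maps with $L_v(\lambda)$ and $L_q(\lambda)$. The image of $\Delta_q(\lambda)\to\nabla_q(\lambda)$ is at once a nonzero quotient of $\Delta_q(\lambda)$ and a nonzero submodule of $\nabla_q(\lambda)$. Now the head of $\Delta_q(\lambda)$ is $L_q(\lambda)$, the socle of $\nabla_q(\lambda)$ is $L_q(\lambda)$, and $L_q(\lambda)$ appears with multiplicity one in every composition series of $\Delta_q(\lambda)$ (since $\dim\Delta_q(\lambda)_\lambda=1$ while no other composition factor $L_q(\nu)$ has $\lambda$ as a weight). Therefore the head and socle of the image coincide with its unique $L_q(\lambda)$-composition factor, forcing the image to equal $L_q(\lambda)$. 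The identical argument over $\Q(v)$ shows the generic image is $L_v(\lambda)$. At weight $\mu$ this reads
\[
\dim_{\Q(v)}L_v(\lambda)_\mu=\mathrm{rank}_{\Q(v)}M(v)=:r,\qquad \dim_\C L_q(\lambda)_\mu=\mathrm{rank}_\C M(q).
\]

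Every $(r+1)\times(r+1)$ minor of $M(v)$ vanishes in $\Q(v)$, hence already in $A$ (which embeds in $\Q(v)$), and therefore also in $\C$ after evaluation; this yields $\mathrm{rank}_\C M(q)\leq r$ for every $l$, which is exactly the inequality in the proposition. For the equality, fix any $r\times r$ minor $\delta(v)\in A$ which is nonzero --- at least one such exists because the generic rank equals $r$. As a nonzero Laurent polynomial in $v$ with integer coefficients, $\delta$ has only finitely many zeros in $\C^\times$, so only finitely many roots of unity among them, of bounded order. Thus for all sufficiently large $l$ the primitive $l$-th root $q$ is not a zero of $\delta$, whence $\mathrm{rank}_\C M(q)\geq r$ and equality holds. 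The one delicate ingredient in all of this is the head/socle identification of the image with $L_q(\lambda)$ (and with $L_v(\lambda)$); it rests on the finite length of Verma modules in $\mathcal{O}_q$ (Corollary~\ref{cor3d}) together with the corresponding classical fact in $\mathcal{O}_v$.
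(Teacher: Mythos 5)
Your proof is correct and follows essentially the same route as the paper's: both realize $L_v(\lambda)_\mu$ and $L_q(\lambda)_\mu$ as (weight components of) the image of the canonical intertwiner $c_\lambda\colon\Delta_A(\lambda)\to\nabla_A(\lambda)$ over $A=\Z[v,v^{-1}]$, and both deduce the inequality and the generic equality by semicontinuity of the fibre dimension under specialization $v\mapsto q$. Where the paper phrases this semicontinuity homologically (via $\Tor_1^A(C_A(\lambda)_\mu,\C_q)$ for the cokernel $C_A(\lambda)_\mu$, a finitely generated $A$-module), you phrase it in terms of vanishing of minors of the matrix of $c_\lambda|_\mu$ over $A$ — an equivalent and somewhat more explicit formulation of the same argument.
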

 
 \begin{proof}
 The inequality follows from the above considerations. They also show that we have equality if and only if $ \Tor_1^A(C_A(\lambda)_\mu, \C_q) = 0$. But $C_A(\lambda)_\mu$ is
 a finitely generated $A$-module so this $\Tor$ vanishes for all but at most finitely many $q$.
 \end{proof}

\subsection{Generic multiplicities}

From the above discussion we deduce the following result, comparing combinatorics of $\mathcal{O}_v$
with that of $\mathcal{O}$ (see \cite{HK} for some other results in the same spirit).

\begin{thm}\label{Thm20}
Let $\lambda \in X$. Then we have
\begin{displaymath}
\ch L_v(\lambda) = \ch L_\C(\lambda).
\end{displaymath}
\end{thm}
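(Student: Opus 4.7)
The plan is to compare the Verma multiplicities on the two sides via specializations of the $A$-form, first at $v=1$ (recovering $\mathcal{O}_{\mathrm{int}}$) and then at roots of unity of large odd order, where Proposition~\ref{prop19} and the results of Sections~\ref{s3} and~\ref{s6} apply. The formal Verma characters agree: $\ch\Delta_v(\mu)=\ch\Delta_\C(\mu)=e^\mu\prod_{\alpha>0}(1-e^{-\alpha})^{-1}$ in $\widehat{\Z[X]}$ for every $\mu\in X$. Since the decomposition into simple characters is upper-unitriangular (with unique highest term at $\mu=\lambda$ of coefficient $1$), the desired identity $[\Delta_v(\lambda):L_v(\mu)]=[\Delta_\C(\lambda):L_\C(\mu)]$ will follow once we prove
\begin{displaymath}
\ch L_v(\lambda)\;=\;\ch L_\C(\lambda)\quad\text{for every } \lambda\in X.
\end{displaymath}

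First I would establish $\dim L_v(\lambda)_\nu\ge\dim L_\C(\lambda)_\nu$ for every $\nu$ by the analogue at $v=1$ of the specialization argument preceding Proposition~\ref{prop19}. Working over the PID $A'=\Q[v,v^{-1}]$, each weight space $L_{A'}(\lambda)_\nu$ is a submodule of the free $A'$-module $\nabla_{A'}(\lambda)_\nu$, hence itself free of rank $\dim L_v(\lambda)_\nu$. Specializing at $v=1$ produces a $\Q$-vector space of the same dimension which surjects onto $L_\C(\lambda)_\nu$ via $c_\lambda|_{v=1}$, giving the claimed bound.

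For the reverse inequality fix $\lambda,\nu\in X$. By Proposition~\ref{prop19} there exists $l_0=l_0(\lambda,\nu)$ such that $\dim L_v(\lambda)_\nu=\dim L_q(\lambda)_\nu$ for every primitive $l$-th root of unity $q$ with $l\ge l_0$ (coprime to $3$ for type $G_2$). It therefore suffices to establish $\dim L_q(\lambda)_\nu\le\dim L_\C(\lambda)_\nu$ for $l\gg 0$; in fact we shall get equality. Applying Theorem~\ref{prop3} to write $L_q(\lambda)=L_\C(\lambda^1)^{[l]}\otimes L_q(\lambda^0)$ and decomposing weight spaces yields
\begin{displaymath}
\dim L_q(\lambda)_\nu\;=\;\sum_{l\sigma+\tau=\nu}\dim L_\C(\lambda^1)_\sigma\cdot\dim L_q(\lambda^0)_\tau .
\end{displaymath}
For $l$ sufficiently large any summand with $\sigma=\lambda^1-\eta$, $\eta>0$, would satisfy $\lambda^0-\tau=\lambda-\nu-l\eta<0$, contradicting $\tau\le\lambda^0$; hence only the single term $(\sigma,\tau)=(\lambda^1,\nu-l\lambda^1)$ contributes, and $\dim L_q(\lambda)_\nu=\dim L_q(\lambda^0)_{\nu-l\lambda^1}$.

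The main obstacle is verifying that this last dimension equals $\dim L_\C(\lambda)_\nu$ for $l\gg 0$. Both count weight multiplicities at the same depth $\lambda-\nu$ below the respective highest weight, but $L_q(\lambda^0)$ is a simple module at a root of unity with highest weight $\lambda^0=\lambda-l\lambda^1\in X_l\cap X^+$, whose fine structure a priori is governed by the affine Weyl group $W_l$. I would make the stabilization rigorous via Corollary~\ref{cor23} applied to $\ch L_q(\lambda)$: for fixed $\nu$ and $l\gg 0$, every term in the double sum over $W\times W_l$ with $z\ne e$ shifts the Verma index by at least $l$ in some positive coroot direction, hence contributes zero to the coefficient of $e^\nu$, and the formula collapses to precisely the classical KL character sum for $\ch L_\C(\lambda)$. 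Combined with the opposite bound from the $v=1$ step this gives $\dim L_v(\lambda)_\nu=\dim L_\C(\lambda)_\nu$ for every $\nu$, hence $\ch L_v(\lambda)=\ch L_\C(\lambda)$, and the theorem follows.
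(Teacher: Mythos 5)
Your overall strategy is the same as the paper's: compare $\mathcal{O}_v$ with $\mathcal{O}_q$ for a primitive $l$-th root of unity with $l\gg 0$ via Proposition~\ref{prop19}, and then identify the $\mathcal{O}_q$-combinatorics with the classical one using the Kazhdan--Lusztig data of Section~\ref{s6}. Your reformulation via the equality of simple characters $\ch L_v(\lambda)=\ch L_\C(\lambda)$ is equivalent, and the preliminary $v=1$ specialization over the PID $\Q[v,v^{-1}]$ to get the one-sided bound $\dim L_v(\lambda)_\nu\ge\dim L_\C(\lambda)_\nu$ is correct (the freeness of the weight lattice is a sensible precaution) but not needed, since the two-sided bound for $l\gg 0$ already comes out of the root-of-unity argument.

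The real problem is the final ``collapse'' of Corollary~\ref{cor23}. You claim that for $l\gg 0$ every term of the double sum over $W\times W_l$ with $z\ne e$ shifts the indexing weight by at least $l$ in some positive direction, hence drops out of the coefficient of $e^\nu$. This is false: $W_l$ contains the finite Weyl group $W$, and for $z\in W\setminus\{e\}$ the weight $zx^{-1}\cdot\lambda^0$ stays within bounded distance of $\lambda^0$, independently of $l$, so those terms certainly do contribute. Moreover, even if one could discard all $z\ne x$, the surviving sum $\sum_{y\in W}(-1)^{\ell(yw)}P_{y,w}(1)\ch\Delta_q(l(yw^{-1}\cdot\lambda^1)+\lambda^0)$ does not coincide with the classical $\sum_{y\in W}(-1)^{\ell(yw)}P_{y,w}(1)\ch\Delta_\C(yw^{-1}\cdot\lambda)$ term by term, because $l(yw^{-1}\cdot\lambda^1)+\lambda^0\ne yw^{-1}\cdot\lambda$ in general (and the minimal elements $w$ for $\lambda^1$ and for $\lambda$ also differ). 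What the paper does instead is first apply Corollary~\ref{cor3e}, reducing $[\Delta_q(\lambda):L_q(\mu)]$ to the $u_qB_q$-multiplicity $[\tilde\Delta_q(\lambda):\tilde L_q(\mu)]$ for $l\gg 0$, and then observe that, for $l$ large enough that $\lambda$ and $\mu$ lie in $l$-alcoves adjacent to $-\rho$, the minimal element $x\in W_l$ in \eqref{eq16} already lies in the finite subgroup $W$; since $P_{z,x}=0$ unless $z\le x$, only $z\in W$ contribute, and the affine Kazhdan--Lusztig data degenerate to the finite ones of \eqref{eq15}. Your argument needs this alcove-position step in place of the spurious ``$z\ne e$ implies a shift by $\ge l$''.
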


\begin{proof}

We want to prove that $\dim_{\Q(v)} L_v(\lambda)_\mu = \dim_{\C}
L_{\C}(\lambda)_{\mu}$ for all $\mu \in X$. So let us fix $\mu$. By
Proposition 6.1 we have $\dim_{\Q(v)} L_v(\lambda)_\mu = \dim_{\C} L_q
(\lambda)_{\mu}$ for $l$ large. Moreover, by Theorem 5.1 (i) we have
$$\dim L_q(\lambda)_\mu = \sum_{\nu, \eta} p^\C_{\nu, \lambda^1} \tilde
p^q_{\eta, \lambda^0} \dim \Delta_q(l\nu + \eta)_\mu.$$
Note that the pair $(\nu, \eta)$ on the right hand side only contributes a
non-zero term if $\nu \leq \lambda^1$ and $\eta \leq \lambda^0$. However,
if $\nu < \lambda^1$ we have $\Delta_q(l\nu + \eta)_\mu =0$ for all $\eta
\leq \lambda^0$ when $l$ is large enough, i.e. in that case the right hand side
reduces to 
$$\sum_{\nu \leq \lambda^0} \tilde p^q_{\eta, \lambda^0} \dim
\Delta_q(l\lambda^1 + \eta)_\mu.$$
We have $\tilde p^q_{\eta, \lambda^0} = \tilde p^q_{\eta + l\lambda^1,\lambda} $ 
so that we are done if we check that $\tilde p^q_{\eta,
\lambda} = p^{\C}_{\eta, \lambda} $ for all $\eta$ with $\mu \leq
\eta \leq \lambda$ (and for $l$ large).

The right hand side is given by \eqref{eq15}. Let us consider the left hand
side. We choose $l$ large enough so that all the $\eta$'s we consider
belong to the alcoves having $-\rho$ in their closures. The minimal $w \in
W_l$ satisfying $w^{-1}\cdot \lambda \in \bar A_l^l$ then belongs to $W$.
The same is true for the minimal $y$ such that $\eta = yw^{-1} \cdot
\lambda$. Hence the quantum Kazhdan-Lusztig conjecture 
(cf. e.g. \cite[Theorem~6.1]{So1}) says
$$ \tilde p^q_{yw^{-1}\cdot \lambda, \lambda} = (-1)^{l(yw)} P_{y,w}(1)$$
and we are done.
\end{proof}

From Theorem~\ref{Thm20} for all $\lambda,\mu \in X$ we have 
\begin{displaymath}
[\Delta_v(\lambda):L_v(\mu)] = 
[\Delta_\C(\lambda):L_\C(\mu)].
\end{displaymath}
Furthermore, we also have

\begin{cor}\label{corn21}
The result in Theorem~\ref{Thm20} holds not only for an indeterminate $v$ but also for 
any specialization $v \mapsto q$  where $q\in \C$ is transcendental.
\end{cor}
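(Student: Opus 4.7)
The plan is to reuse the setup of Section~6.2 with the root-of-unity specialization replaced by the transcendental one, and then to invoke Theorem~\ref{Thm20}. The decisive observation is that for transcendental $q\in\C$, the ring map $A=\Z[v,v^{-1}]\to\C$, $v\mapsto q$, is injective and factors as $A\hookrightarrow\Q(v)\hookrightarrow\C$: the first arrow is a localization and the second is a field extension, so both are flat. Consequently $\C_q$ is a flat $A$-module, and for any $A$-module $M$ we have $\Tor_i^A(M,\C_q)=0$ for every $i>0$.

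\textbf{Step 1.} Apply this vanishing to the finitely generated $A$-module $M=C_A(\lambda)_\mu$ introduced in Section~6.2. The proof of Proposition~\ref{prop19} then upgrades from inequality to equality:
\[
\dim_\C L_q(\lambda)_\mu \;=\; \dim_{\Q(v)} L_v(\lambda)_\mu\qquad\text{for every }\mu\in X.
\]
Hence $\ch L_q(\lambda)=\ch L_v(\lambda)$, and a direct PBW comparison gives $\ch\Delta_q(\lambda)=\ch\Delta_v(\lambda)$.

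\textbf{Step 2.} Each Verma module has finite length (Corollary~\ref{prop6} and its $\Q(v)$-analogue), and the simple characters are linearly independent in $\widehat{\Z[X]}$ since the highest weight can be read off from the character. Thus the identity \eqref{eq5} (and its $\Q(v)$-version) forces
\[
[\Delta_q(\lambda):L_q(\mu)] \;=\; [\Delta_v(\lambda):L_v(\mu)]\qquad\text{for all }\mu\in X,
\]
and Theorem~\ref{Thm20} identifies the right-hand side with $[\Delta_\C(\lambda):L_\C(\mu)]$, finishing the proof.

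\textbf{Main obstacle.} There is essentially no obstruction beyond the elementary flatness verification. The content of the corollary lies in recognizing that the transcendence of $q$ is exactly what turns the ``large $l$'' hypothesis of Proposition~\ref{prop19} (and of Corollary~\ref{cor3e}) into a uniform statement valid for all $\mu$ simultaneously, so that the strategy of Theorem~\ref{Thm20} applies without any further representation-theoretic input.
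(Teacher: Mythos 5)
Your proposal is correct. The paper offers no proof beyond ``Clearly, this theorem implies,'' the intended shortcut being that for transcendental $q$ the field $\Q(q)\subset\C$ is isomorphic to $\Q(v)$ via $v\mapsto q$, so $U_q\cong U_v\otimes_{\Q(v)}\C$ and the multiplicities $[\Delta_q(\lambda):L_q(\mu)]=[\Delta_v(\lambda):L_v(\mu)]$ are preserved by scalar extension (the simple highest weight modules being absolutely irreducible by the usual weight/endomorphism-ring argument). Your route arrives at the same conclusion by a slightly different mechanism: you observe that transcendence makes $\C_q$ flat over $A=\Z[v,v^{-1}]$ (the $A$-module structure factors through the fraction field $\Q(v)$), so the $\Tor_1^A(C_A(\lambda)_\mu,\C_q)$ obstruction in the proof of Proposition~\ref{prop19} vanishes for every $\mu$, forcing $\dim_\C L_q(\lambda)_\mu=\dim_{\Q(v)}L_v(\lambda)_\mu$; from equal characters of simples and of Vermas the multiplicity equality follows by the unitriangularity of the transition matrix, and Theorem~\ref{Thm20} then closes the argument. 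Both proofs hinge on the same fact (transcendence $\Rightarrow$ the specialization factors through $\Q(v)$); the lattice/flatness detour is a bit more work but has the small advantage of avoiding a separate appeal to absolute irreducibility of the $L_v(\lambda)$.
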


\begin{remark}\label{remn21}
The results of this section also follow from the fact that, after extending the scalars to $\mathbb{C}$, 
for any transcendental $q\in\mathbb{C}\setminus\{0\}$ the categories $\mathcal{O}_q$ 
and $\mathcal{O}_{\mathrm{int}}$ are equivalent (each block of such category can be realized as 
modules over an associative algebra with relations defined over $\mathbb{Q}$).   
As was pointed out to us by D.~Kazhdan, such an equivalence was established 
(in the simply laced case) by M. Finkelberg in his thesis \cite{Fi}.  
G.~Lusztig made us aware of the paper \cite{EK} where such an equivalence is established 
more generally for  symmetrizable Kac-Moody algebras, see \cite[Theorem 4.2]{EK}. 
In our case this result gives that if $v$ is an indeterminate then the category $\mathcal O_v$ is equivalent to the 
integral block in the category $\mathcal O$ for $\mathfrak g \otimes_\Q K$ where $K$ is the fraction field of $\Q[[v]]$.

The methods used in both 
\cite{Fi} and \cite{EK} are completely different from our approach.
\end{remark}

\section{A parallel with Lie superalgebras}\label{s9}

There are several similarities between general properties of $\mathcal{O}_q$ and those of the category $\mathcal{O}$
for finite dimensional Lie superalgebras. These similarities played an important role in forming our intuition for the results of the present paper and in this section we try to make them more precise, following a suggestion of the 
referee and the editor. We refer the reader e.g. to  \cite{Mu} for more details on Lie superalgebras and 
their modules. For a Lie superalgebra $\mathfrak{a}$ we denote by $U(\mathfrak{a})$ the corresponding 
enveloping algebra.

\subsection{Super setup}\label{s9.1}

Let $\mathfrak{g}=\mathfrak{g}_{\overline{0}}\oplus \mathfrak{g}_{\overline{1}}$ be a Lie superalgebra over 
$\mathbb{C}$. We assume that $\mathfrak{g}_{\overline{0}}$ is a finite dimensional reductive Lie algebra and
$\mathfrak{g}_{\overline{1}}$ is a semi-simple $\mathfrak{g}_{\overline{0}}$-mo\-dule. We denote by 
$\mathfrak{g}\text{-}\mathrm{smod}$ the abelian category of $\mathfrak{g}$-supermodules (where morphisms are
homogeneous $\mathfrak{g}$-homomorphisms of degree $0$). Fix some triangular decomposition
$\mathfrak{g}=\mathfrak{n}^-\oplus \mathfrak{h}\oplus \mathfrak{n}^+$ with the induced triangular decomposition
$\mathfrak{g}_{\overline{0}}=\mathfrak{n}^-_{\overline{0}}\oplus \mathfrak{h}_{\overline{0}}\oplus 
\mathfrak{n}^+_{\overline{0}}$ for $\mathfrak{g}_{\overline{0}}$. Our two basic examples are: the general linear
Lie superalgebra $\mathfrak{gl}(m\vert n)$ and the queer Lie superalgebra $\mathfrak{q}_n$. 

The  superalgebra $\mathfrak{gl}(m\vert n)$ consists of $(n+m)\times(n+m)$ matrices naturally divided into
$n\times n$, $n\times m$, $m\times n$ and $m\times m$ blocks. The operation is the usual super-commutator of
matrices. The diagonal blocks form the even part while the off-diagonal blocks form the odd part. The standard 
triangular decomposition corresponds to taking lower triangular, diagonal and upper triangular matrices. Note 
that the Cartan subalgebra $\mathfrak{h}$ of diagonal matrices is purely even.

The  superalgebra $\mathfrak{q}_n$ consists of $2n\times 2n$ matrices  of the form
\begin{displaymath}
\left(\begin{array}{c|c}A&B\\\hline B&A\end{array}\right)
\end{displaymath}
with respect to the usual super-commutator  of matrices. The even part corresponds to $B=0$ while the odd
part corresponds to $A=0$. The triangular decomposition is induced by the standard triangular decomposition 
for $A$ and $B$ (simultaneously). In this example the Cartan subalgebra $\mathfrak{h}$ has a nonzero
odd component, in particular, it is not commutative.

\subsection{Category $\mathcal{O}$}\label{s9.2}

To avoid technicalities and complicated notation, we will describe the situation for the classical category
$\mathcal{O}$ for $\mathfrak{g}$. All properties transfer mutatis mutandis to the parabolic versions of
$\mathcal{O}$.

For a Lie superalgebra with triangular decomposition as above we can consider the corresponding category
$\mathcal{O}$ defined as the full subcategory of $\mathfrak{g}\text{-}\mathrm{smod}$ containing all objects 
$M$ with the following properties:
\begin{itemize}
\item [i)]$M$ is finitely  generated;
\item [ii)]the action of $\mathfrak{h}_{\overline{0}}$ on $M$ is diagonalizable;
\item [iii)] the action of $U(\mathfrak{n}^+)$ on $M$ is locally finite.
\end{itemize}
Let $V$  be a simple $\mathfrak{h}_0$-diagonalizable $\mathfrak{h}$-supermodule. Set $\mathfrak{n}^+\cdot V=0$
and define the {\em Verma} or {\em proper standard} supermodule $\overline{\Delta}(V)$ as usual via
\begin{displaymath}
\overline{\Delta}(V):=U(\mathfrak{g})\otimes_{U(\mathfrak{h}\oplus \mathfrak{n}^+)} V.
\end{displaymath}
Standard arguments (see \cite{Di,Hu}) show that $\overline{\Delta}(V)$ has simple top, which we denote by $L(V)$.
Then the map $V\mapsto L(V)$ sets up a bijection from the set isomorphism classes of  simple 
$\mathfrak{h}_0$-diagonalizable $\mathfrak{h}$-supermodules to the set of isomorphism classes of simple objects 
in $\mathcal{O}$.

Let $\mathcal{O}_{\overline{0}}$ denote the category $\mathcal{O}$ for $\mathfrak{g}_{\overline{0}}$.
As $U(\mathfrak{g})$ is finite over $U(\mathfrak{g}_{\overline{0}})$ by the PBW theorem,
we have the usual induction functor
\begin{displaymath}
\mathrm{Ind}_{\mathfrak{g}_{\overline{0}}}^{\mathfrak{g}}:
\mathcal{O}_{\overline{0}}\to \mathcal{O}.
\end{displaymath}
Our first observation is the following (compare with Corollary~\ref{prop6}):

\begin{prop}\label{prop91}
\begin{enumerate}[$($a$)$]
\item\label{prop91.1} The restriction functor $\mathrm{Res}_{\mathfrak{g}_{\overline{0}}}^{\mathfrak{g}}$ 
maps $\mathcal{O}$ to $\mathcal{O}_{\overline{0}}$.
\item\label{prop91.2} Every object in $\mathcal{O}$ has finite length.
\end{enumerate}
\end{prop}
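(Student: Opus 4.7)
The plan is to verify part (\ref{prop91.1}) directly from the three defining conditions of $\mathcal{O}_{\overline{0}}$, and then to deduce part (\ref{prop91.2}) from the classical finite-length result for the even BGG category by comparing chains of submodules. The key observation underlying both parts is the PBW decomposition $U(\mathfrak{g})\cong U(\mathfrak{g}_{\overline{0}})\otimes\Lambda(\mathfrak{g}_{\overline{1}})$ as vector spaces, together with the finite dimensionality of $\mathfrak{g}_{\overline{1}}$ (forced by the assumption that $\mathfrak{g}$ is finite dimensional in the examples under consideration), so that $U(\mathfrak{g})$ is finitely generated as a $U(\mathfrak{g}_{\overline{0}})$-bimodule.

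For (\ref{prop91.1}), I would fix $M\in\mathcal{O}$ and check each condition. If $M=U(\mathfrak{g})m_1+\dots+U(\mathfrak{g})m_k$ and $e_1,\dots,e_N$ is a basis of $\Lambda(\mathfrak{g}_{\overline{1}})$, then by the PBW decomposition above the finite set $\{e_j\cdot m_i\}$ generates $M$ over $U(\mathfrak{g}_{\overline{0}})$, verifying (i). Condition (ii), diagonalizability of the action of $\mathfrak{h}_{\overline{0}}$, is already built into the definition of $\mathcal{O}$ and requires no verification. For (iii), local finiteness of $U(\mathfrak{n}^+)$ implies local finiteness of its subalgebra $U(\mathfrak{n}^+_{\overline{0}})$.

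For (\ref{prop91.2}), I would note that any strictly increasing chain $0=M_0\subsetneq M_1\subsetneq\cdots\subsetneq M_k=M$ of $\mathfrak{g}$-subsupermodules is simultaneously a strictly increasing chain of $\mathfrak{g}_{\overline{0}}$-submodules of $\mathrm{Res}_{\mathfrak{g}_{\overline{0}}}^{\mathfrak{g}} M$. By part (\ref{prop91.1}) the restriction lies in $\mathcal{O}_{\overline{0}}$, so the classical finite-length theorem for the BGG category (see \cite[Chapter 7]{Di} or \cite[Theorem 1.11]{Hu}) bounds its length, and hence $k$, by some finite $n$. This shows $M$ has finite length and completes the argument.

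I do not anticipate a serious obstacle: the whole argument reduces to a routine use of PBW plus the well-known finite-length property of the even category $\mathcal{O}$. This parallels exactly the reduction employed for $\mathcal{O}_q$ in Corollary~\ref{prop6}, where Condition~(\ref{cond3}) on local finiteness of $U_q^+$ played the role the finite rank of $U(\mathfrak{g})$ over $U(\mathfrak{g}_{\overline{0}})$ plays here.
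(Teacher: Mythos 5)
Your proof is correct, and it takes a genuinely different (and arguably more direct) route than the paper. The paper proves the two assertions in the opposite order: it first shows, via PBW, that $\mathrm{Res}$ of each Verma supermodule $\overline{\Delta}(V)$, and hence of each simple $L(V)$, lies in $\mathcal{O}_{\overline{0}}$; then it uses the adjunction $\mathrm{Ind}\dashv\mathrm{Res}$ to realize a projective cover $P(V)$ as a summand of $\mathrm{Ind}\,X$ for some $X\in\mathcal{O}_{\overline{0}}$, together with the isomorphism $\mathrm{Res}\circ\mathrm{Ind}\cong{}_-\otimes\bigwedge\mathfrak{g}_{\overline{1}}$, to conclude that $P(V)$ has finite length; this gives part \eqref{prop91.2}, and then part \eqref{prop91.1} is deduced by induction on length. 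You instead verify part \eqref{prop91.1} directly for an arbitrary $M\in\mathcal{O}$: finite generation of $\mathrm{Res}\,M$ follows because $U(\mathfrak{g})$ is a finitely generated $U(\mathfrak{g}_{\overline{0}})$-module by PBW, $\mathfrak{h}_{\overline{0}}$-diagonalizability is part of the definition, and local $U(\mathfrak{n}^+_{\overline{0}})$-finiteness is inherited from local $U(\mathfrak{n}^+)$-finiteness; then part \eqref{prop91.2} follows immediately by comparing chains of subsupermodules with chains of $\mathfrak{g}_{\overline{0}}$-submodules and invoking finite length in $\mathcal{O}_{\overline{0}}$. Your approach is more self-contained and avoids any appeal to the existence of projective covers in $\mathcal{O}$; the paper's approach, while less elementary, establishes along the way that projectives in $\mathcal{O}$ arise as summands of induced modules from $\mathcal{O}_{\overline{0}}$, a structural fact it reuses later (e.g.\ in the proofs of the dominance-dimension and Struktursatz analogues in Subsection~\ref{s9.4}).
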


\begin{proof}
From the PBW Theorem it follows that 
$\mathrm{Res}_{\mathfrak{g}_{\overline{0}}}^{\mathfrak{g}}\, \overline{\Delta}(V)$ is in $\mathcal{O}_{\overline{0}}$,
which implies $\mathrm{Res}_{\mathfrak{g}_{\overline{0}}}^{\mathfrak{g}}\, L(V)\in \mathcal{O}_{\overline{0}}$.
By adjunction, a projective cover $P(V)$ for $L(V)$ is a direct summand of some module of the form
$\mathrm{Ind}_{\mathfrak{g}_{\overline{0}}}^{\mathfrak{g}}\, X$, where $X\in \mathcal{O}_{\overline{0}}$.

By the PBW Theorem, the composition $\mathrm{Res}_{\mathfrak{g}_{\overline{0}}}^{\mathfrak{g}}\circ
\mathrm{Ind}_{\mathfrak{g}_{\overline{0}}}^{\mathfrak{g}}$ is naturally isomorphic to tensoring with the 
finite dimensional $\mathfrak{g}_{\overline{0}}$-module $\bigwedge\mathfrak{g}_{\overline{1}}$.
This means that $P(V)$ is of finite length already as a $\mathfrak{g}_{\overline{0}}$-module, in particular,
it is of finite length as an object in $\mathcal{O}$. This implies claim \eqref{prop91.2} and claim
\eqref{prop91.1} follows by induction on the length of a module.
\end{proof}

Both $\mathrm{Res}_{\mathfrak{g}_{\overline{0}}}^{\mathfrak{g}}$ and 
$\mathrm{Ind}_{\mathfrak{g}_{\overline{0}}}^{\mathfrak{g}}$ are exact by the PBW Theorem (and even biadjoint up 
to parity shifts, see e.g. \cite{Go}). In particular, the two functors take projectives to projectives and injectives 
to injectives. The PBW Theorem also implies that they map modules with Verma filtrations to modules with
Verma filtrations. 

Usual arguments imply that $\mathcal{O}$ has a block decomposition into a direct sum of indecomposable 
subcategories. The first principal difference with $\mathcal{O}_{\overline{0}}$ is that blocks of 
$\mathcal{O}$ might contain infinitely many isomorphism classes of simple objects (however, always at most
countably many). Description of the linkage principle for simples in $\mathcal{O}$ is a very hard problem in 
full generality. However, it is known for several special cases, see e.g. \cite{Br1} for
$\mathfrak{gl}(m\vert n)$ or \cite{Br2} for $\mathfrak{q}_n$. 

\subsection{Standardly stratified structure}\label{s9.3}

Let $V$  be a simple $\mathfrak{h}_0$-dia\-gonal\-izable $\mathfrak{h}$-supermodule and
$\hat{V}$ be its projective cover in the category of $\mathfrak{h}_0$-diagonalizable $\mathfrak{h}$-supermodules.
Set $\mathfrak{n}^+\cdot \hat{V}=0$  and define the {\em standard} module $\Delta(V)$ as follows:
\begin{displaymath}
{\Delta}(V):=U(\mathfrak{g})\otimes_{U(\mathfrak{h}\oplus \mathfrak{n}^+)} \hat{V}. 
\end{displaymath} 
From the PBW Theorem it follows that every projective in $\mathcal{O}$ has a {\em standard filtration}, that is
a filtration with standard subquotients. Clearly, each ${\Delta}(V)$ has a filtration by Verma 
(proper standard) supermodules. Note that $\mathrm{Ind}_{\mathfrak{g}_{\overline{0}}}^{\mathfrak{g}}$ maps modules 
with Verma filtrations to modules with standard filtrations. 

Dually, one defines the costandard module ${\nabla}(V)$ and the proper costandard module
$\overline{\nabla}(V)$. Using adjunction and the fact that 
$\mathcal{O}_{\overline{0}}$ is a highest weight category, we get the following
ext-vanishing property:
\begin{displaymath}
\mathrm{Ext}_{\mathcal{O}}^i({\Delta}(V),\overline{\nabla}(V'))\cong
\begin{cases}
\mathbb{C}& \text{if }V\cong V';\\
0& \text{otherwise}.
\end{cases}
\end{displaymath}
This implies that the associative algebra describing a block of $\mathcal{O}$ is standardly stratified in the 
sense of \cite{CPS2}. In particular, we have the following BGG-reciprocity  in $\mathcal{O}$:
\begin{displaymath}
(P(V):{\Delta}(V'))=[\overline{\nabla}(V'):L(V)]
\end{displaymath}
(compare with Corollary~\ref{Cor17}). Another consequence is that $\mathcal{O}$ has tilting modules
(in the sense of \cite{Fr}). Tilting modules are modules which have both a standard filtration and a
proper costandard filtration. Both $\mathrm{Res}_{\mathfrak{g}_{\overline{0}}}^{\mathfrak{g}}$ and 
$\mathrm{Ind}_{\mathfrak{g}_{\overline{0}}}^{\mathfrak{g}}$ map tilting modules to tilting modules.
It follows that tilting modules are also cotilting.

We refer the reader to \cite{Fr2} for a very detailed $\mathfrak{q}_n$-example where 
${\Delta}(V)$ and $\overline{\Delta}(V)$ are explicitly computed and compared.

\subsection{Dominance dimension and Soergel's Struktursatz}\label{s9.4}

For category $\mathcal{O}$ one has a direct analogue of all our results from Subsection~\ref{s4.3}. 
We claim that every projective $P$ in $\mathcal{O}$ admits a two step coresolution
\begin{displaymath}
0\to P\to X_1\to X_2,
\end{displaymath}
where both $X_1$ and $X_2$ are projective-injective (compare with Proposition~\ref{prop4.3.1}). 
It is certainly enough to prove this for $P(V)$. From the above
we know that the latter module occurs as a direct summand of a module induced from a projective module in 
$\mathfrak{g}_{\overline{0}}$. Since induction is exact and maps projectives to projectives and injectives to 
injectives, the claim follows from the corresponding classical claim for $\mathcal{O}_{\overline{0}}$. 

Pick a representative in each isomorphism class of indecomposable projective-injective modules in 
$\mathcal{O}$ and let $\cC^{PI}$ be the full subcategory of $\mathcal{O}$ formed by these. 
Taking homomorphisms in 
$\mathcal{O}$ into these representatives defines a contravariant functor $\Phi$ from the additive category
of projective objects in $\mathcal{O}$ into $\mathrm{mod}\text{-}\cC^{PI}$
(see Subsection~\ref{s4.3} for more details).

\begin{thm}\label{thm93}
The functor $\Phi$ is fully faithful.
\end{thm}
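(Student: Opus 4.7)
The plan is to adapt the proof of Theorem~\ref{thm4.3.2} to the super setting. That proof rested on two inputs: the dominance dimension $\geq 2$ property for projectives and the fact that every simple occurring in the socle of a projective also occurs as a quotient of some projective-injective module. The first input has just been established in Subsection~\ref{s9.4}. The second reduces in the super case to self-duality of indecomposable projective-injectives under the standard duality $\star$ on $\mathcal{O}$, which I would derive from the tilting/cotilting coincidence noted in Subsection~\ref{s9.3} together with the observation that projective-injective modules are tilting.

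For faithfulness of $\Phi$, let $\varphi\colon P\to P'$ be a nonzero morphism between indecomposable projective objects. Its image contains a simple submodule $L$ of the socle of $P'$. The coresolution of $P'$ embeds $L$ into some $X\in\cC^{PI}$; by self-duality of projective-injectives, $L$ also appears as a quotient of some $X'\in\cC^{PI}$. Projectivity of $X'$ lets us lift the surjection $X'\to L$ through $\varphi$ to a map $\psi\colon X'\to P$ with $\varphi\circ\psi\neq 0$, which forces $\Phi(\varphi)\neq 0$.

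For fullness I would copy the diagrammatic argument from the proof of Theorem~\ref{thm4.3.2}. Choose coresolutions $0\to P\to X_1\to X_2$ and $0\to P'\to Y_1\to Y_2$ with $X_i,Y_i\in\cC^{PI}$. Applying $\mathrm{Hom}_{\mathcal{O}}(-,-)$ produces partial resolutions of $\Phi(P)$ and $\Phi(P')$ by representable functors on $\cC^{PI}$, and Yoneda identifies these with the images under $\Phi$ of the objects $X_i$ and $Y_i$. Any morphism $f\colon\Phi(P)\to\Phi(P')$ lifts to a morphism of resolutions, and Yoneda again promotes its components to genuine morphisms $X_1\to Y_1$ and $X_2\to Y_2$ in $\mathcal{O}$. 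The universal property of kernels then yields a unique $\varphi\colon P\to P'$ with $\Phi(\varphi)=f$.

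The main obstacle I anticipate is the verification of self-duality for projective-injectives in super $\mathcal{O}$. In the quantum setting Theorem~\ref{Thm3.9.1} handled this via the equivalence between $\mathcal{O}_q^{\mathrm{spec}}$ and $\mathcal{O}_{\mathrm{int}}$ from Theorem~\ref{thm9}; no analogous reduction is available in the super case. Instead one must check directly that each indecomposable projective-injective is tilting and hence $\star$-invariant. Once this structural point is secured, the rest of the proof is formal and parallels Theorem~\ref{thm4.3.2} step by step.
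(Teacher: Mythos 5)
Your reduction of the problem to two ingredients (dominance dimension $\geq 2$ and the ``socle of a projective-injective $=$ top of a projective-injective'' equivalence) is exactly the structure of the paper's proof, and your formal faithfulness and fullness arguments are sound given those inputs. Where you diverge from the paper, and where the gap lies, is in how you propose to secure the second ingredient. You want to deduce it from $\star$-self-duality of projective-injectives, which you in turn derive from the chain ``projective-injective $\Rightarrow$ tilting $\Rightarrow$ cotilting $\Rightarrow$ $\star$-self-dual.'' But Section~\ref{s9} of the paper never introduces a simple-preserving duality $\star$ on super $\mathcal{O}$, and for good reason: for a general $\mathfrak{g}=\mathfrak{g}_{\overline{0}}\oplus\mathfrak{g}_{\overline{1}}$ satisfying only the hypotheses of Subsection~\ref{s9.1} it is not clear that such a duality exists, and even in the concrete examples (most acutely $\mathfrak{q}_n$, where $\mathfrak{h}$ is noncommutative and simple $\mathfrak{h}$-supermodules are higher-dimensional Clifford modules) there are parity-shift subtleties, which is precisely why Theorem~\ref{thm95}\eqref{thm95-1} is phrased ``up to parity shift.'' Moreover, the step from ``tilting $=$ cotilting'' to ``each indecomposable projective-injective is $\star$-invariant'' implicitly uses a classification of indecomposable tiltings by highest weights and the compatibility of $\star$ with that classification, neither of which is developed in the paper.

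The paper's route avoids all of this. The ``socle $=$ top'' statement is Theorem~\ref{thm95}\eqref{thm95-2}$\Leftrightarrow$\eqref{thm95-3}, and its proof goes through the exact biadjoint pair $\mathrm{Res}_{\mathfrak{g}_{\overline{0}}}^{\mathfrak{g}}$, $\mathrm{Ind}_{\mathfrak{g}_{\overline{0}}}^{\mathfrak{g}}$ (biadjoint up to parity shift, as recalled in Subsection~\ref{s9.2}), transporting Irving's theorem from $\mathcal{O}_{\overline{0}}$ --- the same adjunction that already gave finite length (Proposition~\ref{prop91}) and dominance dimension. This makes the ``mutatis mutandis'' in the paper's proof of Theorem~\ref{thm93} completely formal: both inputs to the Theorem~\ref{thm4.3.2} argument are obtained by induction/restriction from the even part, with no appeal to a duality. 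To repair your proof, replace the tilting/cotilting/self-duality detour with this adjunction argument; once the ``socle of PI $=$ top of PI'' property is in place your faithfulness lift and your Yoneda-diagram fullness argument go through unchanged.
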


\begin{proof}
Mutatis mutandis the proof of Theorem~\ref{thm4.3.2}. 
\end{proof}

\subsection{Irving's theorems}\label{s9.6}

We also have the following natural super-analogue of Theorem~\ref{Thm3.9.1}.

\begin{thm}\label{thm95}
Let $V$ be a simple $\mathfrak{h}_0$-dia\-gonal\-izable $\mathfrak{h}$-supermodule. Then the 
following assertions are equivalent:
\begin{enumerate}[$($a$)$]
\item\label{thm95-1} $P(V)$ is isomorphic to $I(V)$ up to parity shift.
\item\label{thm95-2} $L(V)$ occurs in the socle of a projective-injective module in $\mathcal{O}$.
\item\label{thm95-3} $L(V)$ occurs in the top of a projective-injective module in $\mathcal{O}$.
\item\label{thm95-4} $L(V)$ occurs in the socle of some $\Delta(V')$.
\end{enumerate}
\end{thm}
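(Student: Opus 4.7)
The plan is to adapt the argument of Theorem~\ref{Thm3.9.1}, replacing the block equivalence $\mathcal{O}_{\mathrm{int}}\cong\mathcal{O}_q^{\mathrm{spec}}$ by the exact biadjoint pair $(\mathrm{Ind}_{\mathfrak{g}_{\overline{0}}}^{\mathfrak{g}},\mathrm{Res}_{\mathfrak{g}_{\overline{0}}}^{\mathfrak{g}})$. By Subsection~\ref{s9.2} these functors preserve projectives, injectives (hence projective-injective modules) and modules admitting standard or costandard filtrations; the classical Irving theorem for $\mathcal{O}_{\overline{0}}$ (see \cite[Proposition~4.3]{Ir}) supplies the base case.

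The implications (a) $\Rightarrow$ (b), (c) are immediate: $P(V)\cong I(V)$ up to parity shift exhibits a single projective-injective containing $L(V)$ in both its top and its socle. For (a) $\Rightarrow$ (d), I would invoke the standard filtration $0=M_0\subset M_1\subset\cdots\subset M_n=P(V)$ from Subsection~\ref{s9.3}; since $L(V)$ is the simple socle of $P(V)\cong I(V)$, pick the smallest index $i$ for which $L(V)$ embeds into $M_i$ and observe that the composite $L(V)\hookrightarrow M_i\twoheadrightarrow M_i/M_{i-1}\cong \Delta(V')$ is nonzero, hence injective, yielding $L(V)\hookrightarrow \Delta(V')$.

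For (b) or (c) $\Rightarrow$ (a), if $L(V)$ sits in the socle (respectively top) of a projective-injective $X$, then by the injectivity (respectively projectivity) of $X$ and the universal property of the envelope (respectively cover), the module $I(V)$ (respectively $P(V)$) splits off as a direct summand of $X$ and is thus itself projective-injective. Applying $\mathrm{Res}$, the classical Irving theorem identifies the classical projective cover with the classical injective envelope for each composition factor of $\mathrm{Res}\,L(V)$ that contributes to the top or socle; pulling this back through $\mathrm{Ind}$ and extracting the indecomposable summand labelled by $L(V)$ yields $P(V)\cong I(V)$ up to parity. For (d) $\Rightarrow$ (c), apply $\mathrm{Res}$ to $L(V)\hookrightarrow\Delta(V')$: since $\mathrm{Res}\,\Delta(V')$ admits a classical Verma filtration by the PBW Theorem, some composition factor $L_{\overline{0}}(\mu)$ of $\mathrm{Res}\,L(V)$ embeds into a classical Verma, hence $\mu$ is antidominant by the classical theorem and $P_{\overline{0}}(\mu)\cong I_{\overline{0}}(\mu)$ is projective-injective in $\mathcal{O}_{\overline{0}}$. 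Then $\mathrm{Ind}\,P_{\overline{0}}(\mu)$ is a projective-injective in $\mathcal{O}$ that, by Frobenius adjunction applied to the embedding $L_{\overline{0}}(\mu)\hookrightarrow \mathrm{Res}\,L(V)$, surjects onto $L(V)$, exhibiting $L(V)$ in the top of a projective-injective module.

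The main obstacle is the parity bookkeeping accompanying the biadjointness of $(\mathrm{Ind},\mathrm{Res})$, together with the identification of the particular indecomposable summand of $\mathrm{Ind}\,P_{\overline{0}}(\mu)\cong\mathrm{Ind}\,I_{\overline{0}}(\mu)$ corresponding to $L(V)$ needed to transfer the classical Irving identification back to the super statement $P(V)\cong I(V)$ up to parity shift. Modulo this bookkeeping, each step reduces to the classical Irving argument in $\mathcal{O}_{\overline{0}}$ combined with the structural facts from Subsections~\ref{s9.2}--\ref{s9.4}.
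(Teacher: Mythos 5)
Your approach matches the paper's, whose entire proof is the single sentence ``This follows by adjunction from the corresponding properties of $\mathcal{O}_{\overline{0}}$''; you have correctly unwound that sentence into the reductions via the exact biadjoint pair $(\mathrm{Ind}_{\mathfrak{g}_{\overline{0}}}^{\mathfrak{g}},\mathrm{Res}_{\mathfrak{g}_{\overline{0}}}^{\mathfrak{g}})$ (preserving projectives, injectives and standard filtrations) together with the classical Irving theorem for $\mathcal{O}_{\overline{0}}$. The ``parity bookkeeping'' and summand-identification you flag for recovering the precise isomorphism $P(V)\cong I(V)$ (rather than merely the projective-injectivity of $P(V)$) is indeed the only delicate point, and the paper is no more explicit about it than you are.
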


\begin{proof}
This follows by adjunction from the corresponding properties of $\mathcal{O}_{\overline{0}}$.
\end{proof}

\subsection{Constructive differences}\label{s9.7}

The most important difference between category $\mathcal{O}_q$ in the quantum case
and category $\mathcal{O}$ in the super case is that
for the latter we do not know of any analogue for our decomposition statements Theorem~\ref{prop3},
Theorem~\ref{Thm13}, Theorem~\ref{Thm14} and Corollary~\ref{cor4.4.2}. It seems that it is unreasonable to
expect such analogues. To some extent the situation in the super case is opposite to the quantum case.
In the quantum case we have one nice subcategory (the special block) and all other subcategories are more
complicated, however, they are ``more complicated in the same way''. For Lie superalgebras the situation is the
opposite: generically, a Lie superalgebra ``behaves'' like its even part (see e.g. \cite{Go2,FM}), that is
in an ``easy'' way. However, there are non-generic degenerations of various kinds (e.g. atypical modules,
non-diagonalizable $\mathfrak{h}$-supermodules, projective-injectives which are self-dual only up to
a parity shift etc.) and for these degenerations the behavior is more complicated, however, in different ways. 
Therefore we do not think that our tensor decomposition statements have any chance to generalize to the super case.

\vspace{0.3cm}

\noindent
H.~H.~A.: Center for Quantum Geometry of Moduli Spaces, Aarhus 
University, Building 530, Ny Munkegade, 8000  Aarhus C, DENMARK,
email: {\tt mathha\symbol{64}qgm.au.dk}
\vspace{0.3cm}

\noindent
V.~M: Department of Mathematics, Uppsala University, Box. 480,
SE-75106, Uppsala, SWEDEN, email: {\tt mazor\symbol{64}math.uu.se}

\end{document}